\newcommand\C{{\cal {C}}}
\newcommand\dd{{d(0)}}
\newcommand{\ds}{\displaystyle}
\newcommand\dv{\mathop{\rm div}}
\newcommand{\eb}{{\epsilon_0}}
\newcommand{\ee}{{\bar\epsilon}}
\renewcommand{\H}{{\cal H}}
\renewcommand\iint{\displaystyle\int_{|y|<1}}
\renewcommand{\k}{k}
\newcommand{\m}[1]{\mathbbm{#1}}
\newcommand{\pnu}{\partial_\nu}
\newcommand{\q}[1]{\mathcal{#1}}
\newcommand\RR{{\cal R}}
\renewcommand\SS{{\cal S}}
\newcommand\supp{\mathop{\rm supp}}
\newcommand \ttt {{\bar t}}
\newcommand \tautau t
\newcommand \xixi x
\newcommand{\vc}[2]{\begin{pmatrix} #1\\#2\end{pmatrix}}
\newcommand\wto{\rightharpoonup}
\theoremstyle{plain}
\newtheorem{thm}{Theorem}
\newtheorem*{thm*}{Theorem}
\newtheorem{coro}[thm]{Corollary}
\newtheorem{prop}{Proposition}[section]
\newtheorem{cl}[prop]{Claim}
\newtheorem{lem}[prop]{Lemma}
\theoremstyle{definition}
\theoremstyle{remark}
\newtheorem*{nb}{Remark}
\def\blfootnote{\xdef\@thefnmark{}\@footnotetext}
\title {\bf On the stability of the notion of non-characteristic point and blow-up profile for semilinear wave equations
\footnote{Both authors are supported by the ERC Advanced Grant no. 291214, BLOWDISOL. H.Z. is partially supported by ANR project ANAÉ ref. ANR-13-BS01-0010-03.}
}
\author{Frank Merle\\
{\it \small Universit\'e de Cergy Pontoise and IHES}\\
Hatem Zaag\\
{\it \small Université Paris 13, Sorbonne Paris Cité,}\\
{\it \small LAGA, CNRS (UMR 7539), F-93430, Villetaneuse, France.}}
\begin{document} 

\maketitle

\begin{abstract}
We consider a blow-up solution for the semilinear wave equation in $N$ dimensions, with subconformal power nonlinearity. Introducing $\RR_0$ the set of non-characteristic points with the Lorentz transform of the space-independent solution as asymptotic profile, we show that $\RR_0$ is open and that the blow-up surface is of class $C^1$ on $\RR_0$. Then, we show the stability of $\RR_0$ with respect to initial data. 
\end{abstract}

\medskip

{\bf MSC 2010 Classification}:  35L05, 35L71, 35L67, 35B44, 35B40

%
\medskip
{\bf Keywords}: Semilinear wave equation, blow-up, higher-dimensional case, blow-up surface, non-characteristic points. 


\section{Introduction}
We consider the subconformal focusing semilinear wave equations in $\m R^N$:
\begin{equation}\label{equ}
\left\{
\begin{array}{l}
\partial_t^2 u =\Delta u+|u|^{p-1}u,\\
(u(0), \partial_t u(0))\in H^1\times L^2,
\end{array}
\right.
\end{equation}
with 
\begin{equation}\label{condp}
1<p\mbox{ and }p< \frac{N+3}{N-1}\mbox{ if } N\ge 2.
\end{equation}
The Cauchy problem is locally wellposed. By energy arguments, Levine showed in \cite{Ltams74} the existence of blow-up solutions. 

\medskip

Equation \eqref{equ} can be considered as a lab model for blow-up in hyperbolic equations, because it captures features common to a whole range of blow-up problems arising in various nonlinear physical models, in particular in general relativity (see Donninger, Schlag and Soffer \cite{DSScmp12}), and also for self-focusing waves in nonlinear optics (see Bizo\'n, Chmaj and Szpak \cite{BCSjmp11}). 


\bigskip

The one-dimensional case in equation \eqref{equ} has been understood completely, in a series of papers by the authors \cite{MZjfa07, MZcmp08, MZxedp10, MZajm11, MZisol10} and in C\^ote and Zaag \cite{CZmulti11}.
See Caffarelli and Friedman in \cite{CFtams86} and \cite{CFarma85} for earlier results. See also Killip and Vi\c san \cite{KV11}. 

\bigskip

In higher dimensions, only the blow-up rate is known (see \cite{MZajm03}, \cite{MZma05} and \cite{MZimrn05}; see also the extension by Hamza and Zaag in \cite{HZnonl12} and \cite{HZjhde12} to the Klein-Gordon equation and other damped lower-order perturbations of equation \eqref{equ}). In fact, the program of dimension $1$ breaks down because there is no classification of selfsimilar solutions of equation \eqref{equ} in the energy space, except in the radial case outside the origin (see \cite{MZbsm11}). Considering the behavior of radial solutions at the origin, Donninger and Sch{\"o}rkhuber were able to prove the stability of the space-independent solution (i.e. the solution of the associated ODE $u"=u^p$) with respect to perturbations in initial data, in the Sobolev subcritical range \cite{DSdpde12} and also in the supercritical range in \cite{DStams14}. Some numerical results are available in a series of papers by Bizo\'n and co-authors (see \cite{Bjnmp01}, \cite{BCTnonl04}, \cite{BZnonl09}).\\
In this paper, we aim at extending to higher dimensions, the results of classification of the blow-up behavior proved in one space dimension in \cite{MZcmp08}.

\medskip

If $u$ is a blow-up solution of equation \eqref{equ}, we define (see for example Alinhac \cite{Apndeta95}) a 1-Lipschitz surface $\{(x,T(x))\}$ where $x\in{\m R}^N$ such that the domain of definition of $u$ is written as 
\begin{equation}\label{defdu}
D=\{(x,t)\;|\; t< T(x)\}.
\end{equation}
$\{(x,T(x))\}$ is called the blow-up surface of $u$. 
A point $x_0\in{\m R}^N$ is a non-characteristic point if there are 
\begin{equation}\label{nonchar}
\delta_0\in(0,1)\mbox{ and }t_0<T(x_0)\mbox{ such that }
u\;\;\mbox{is defined on }{\cal C}_{x_0, T(x_0), \delta_0}\cap \{t\ge t_0\}
\end{equation}
where 
\begin{equation}\label{defcone}
{\cal C}_{\bar x, \bar t, \bar \delta}=\{(x,t)\;|\; t< \bar t-\bar \delta|x-\bar x|\},
\end{equation}
as illustrated in figure \ref{fig1}.
 \begin{figure}
\centering
\includegraphics[width=0.6\textwidth]{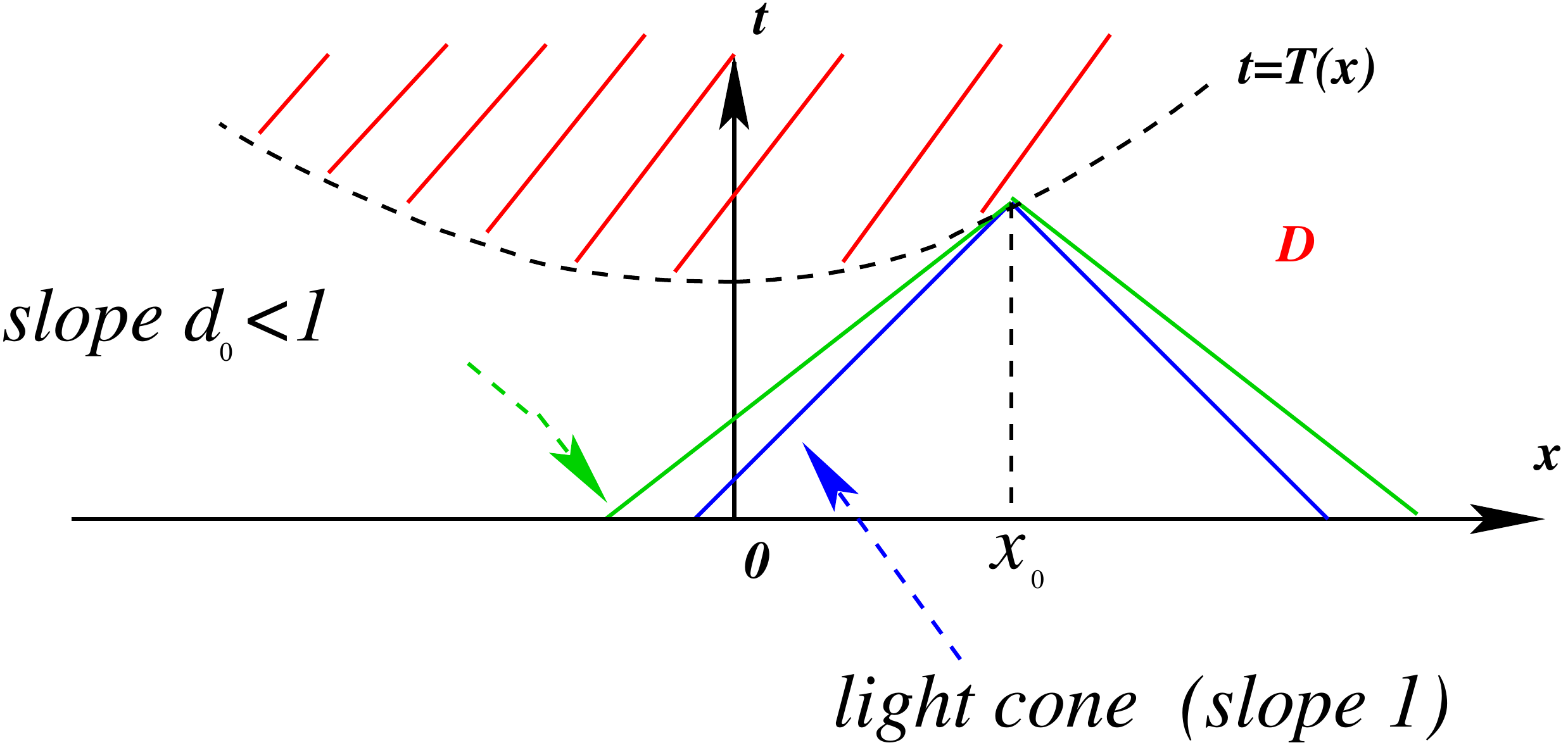}
\caption{$x_0$ is a non-characteristic point ($N=1$).}\label{fig1}
\end{figure}
If not, we say that $x_0$ is a characteristic point.
 We denote by $\RR\subset {\m R}^N$ the set of non-characteristic points and by $\SS$ the set of characteristic points. 
In \cite{MZajm11}, we showed the existence of solutions with $\SS \neq \emptyset$, in dimensions $N\ge 1$. In \cite{MZisol10}, we proved that $\SS$ is locally finite, and that the blow-up surface $\{(x,T(x))\}$ is tangent to the backward light-cone near any $x_0\in \SS$, as illustrated in figure \ref{fig2}. It is an open problem to tell whether this holds also in higher dimensions or not.
\begin{figure}
\centering
\includegraphics[width=0.6\textwidth]{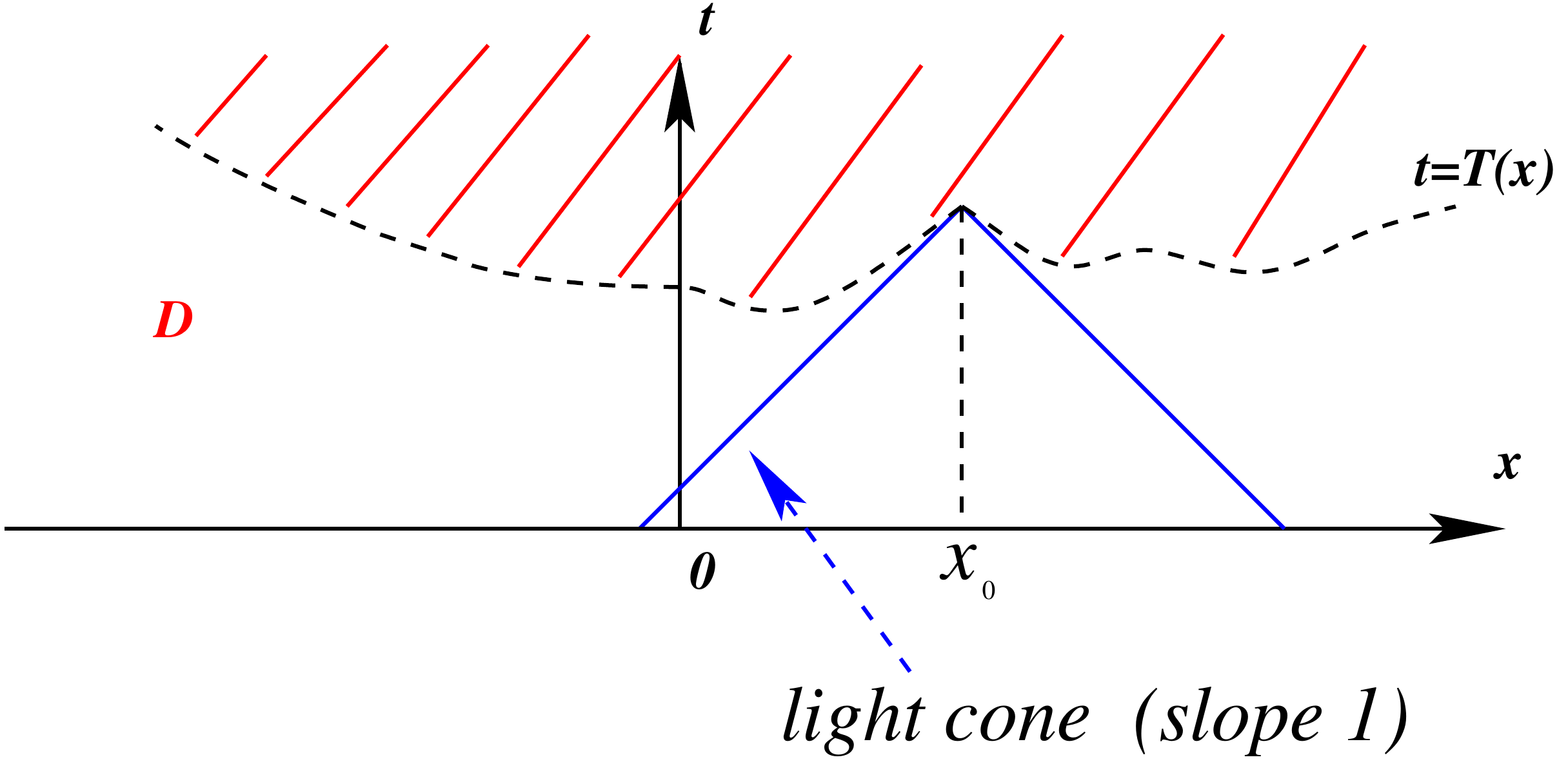}
\caption{$x_0$ is a characteristic point ($N=1$).}\label{fig2}
\end{figure}

\bigskip

Let us introduce the following similarity variables, for any $(x_0,T_0)$ such that $0< T_0\le T(x_0)$:
\begin{equation}\label{defw}
w_{x_0,T_0}(y,s)=(T_0-t)^{\frac 2{p-1}}u(x,t),\;\;y=\frac{x-x_0}{T_0-t},\;\;
s=-\log(T_0-t).
\end{equation}
If $T_0=T(x_0)$, we write $w_{x_0}$ for short. 
The function $w_{x_0,T_0}$ (we write $w$ for simplicity) satisfies the 
following equation for all $y\in B(0,1)$ and $s\ge -\log T_0$:
\begin{equation}\label{eqw}
\partial^2_sw-\q L w+\frac{2(p+1)}{(p-1)^2}w-|w|^{p-1}w=
-\frac{p+3}{p-1}\partial_sw-2y\cdot\nabla \partial_s w
\end{equation} 
where
\begin{equation}\label{defro}
\q L w =\frac 1\rho \dv \left(\rho \nabla w-\rho(y\cdot \nabla w)y\right),\;\; \rho(y)=(1-|y|^2)^\alpha\mbox{ and }\alpha=\frac 2{p-1}-\frac{N-1}2>0.
\end{equation}
Equation \eqref{eqw} is studied in the energy space
\begin{equation}\label{defnh}
\H = \left\{(q_1,q_2) 
\;\;|\;\;\|(q_1,q_2)\|_{\H}^2\equiv \iint \left(q_1^2+|\nabla q_1|^2-|y\cdot \nabla q_1|^2)+q_2^2\right)\rho dy<+\infty\right\}.
\end{equation}
We also introduce for all $|d|<1$ the following stationary solutions of \eqref{eqw} (or solitons) defined by 
\begin{equation}\label{defkd}
\kappa(d,y)=\kappa_0 \frac{(1-|d|^2)^{\frac 1{p-1}}}{(1+d\cdot y)^{\frac 2{p-1}}}\mbox{ where }\kappa_0 = \left(\frac{2(p+1)}{(p-1)^2}\right)^{\frac 1{p-1}} \mbox{ and }|y|<1.
\end{equation}

\bigskip

Let us first review our results in one-space dimension
in the non-characteristic case.

\bigskip

In \cite{CZmulti11, MZjfa07, MZcmp08, MZxedp10, MZajm11, MZisol10}, we give an exhaustive description of the geometry of the blow-up curve on the one hand, and the asymptotic behavior of solutions near the blow-up curve on the other hand
(see also \cite{MZbsm11} and \cite{HZbsm13}).
 Focusing on the non-characteristic case, we recall the following results:\\
{\it - The geometry of the blow-up surface}: $\RR$ is a non empty open set, and $x\mapsto T(x)$ is of class $C^1$ on $\RR$ (see Theorem 1 page 58 and the following remark in \cite{MZcmp08});\\
{\it - The blow-up behavior near $(x_0, T(x_0))$ when $x_0\in \RR$}: it holds that $w_{x_0}(s) \to \pm \kappa(T'(x_0))$ as $s\to \infty$ (see Corollary 4 page 49 in \cite{MZjfa07}).

\bigskip

A key step in this program was to show that the set of all stationary solutions of equation \eqref{eqw} in the energy space is
\begin{equation*}
\{0;\pm \kappa(d,y)\:|\;|d|<1\}.
\end{equation*}
Unfortunately, in higher dimensions, we have been unable to determine that set, though one can trivially see that $0$ and $\pm \kappa(d,\cdot)$ for all $|d|<1$ are still stationary solution of \eqref{eqw}.
Note that when $N=3$ and $p=3$ or $p\ge 7$ is an odd integer, Bizo\'n, Breitenlohner, Maison and Wasserman proved in \cite{BMWnonl07} and \cite{BBMWnonl10} the existence of a countable family of radially-symmetric stationary solutions.

\bigskip

In this paper, we consider $x_0\in\RR$ and assume that 
the blow-up profile at $x_0\in\RR$ is given by $\pm \kappa(d(x_0),\cdot)$ as in one space dimension, in the sense that
\begin{equation}\label{profileN}
\vc{w_{x_0}(s)}{\partial_s w_{x_0}(s)}\to e(x_0)\vc{\kappa(d(x_0))}{0}
\mbox{ in }\q H
\mbox{ as }s\to \infty,
\end{equation}
for some $e(x_0)=\pm 1$ and $|d(x_0)|<1$. 
Then, assuming \eqref{profileN}, we will prove that for $x\sim x_0$, $x\in \RR$ and $w_x(s) \to e(x_0) \kappa(d(x))$ as $s\to \infty$. 
More precisely, let us introduce the set 
\begin{equation}\label{defr0}
\RR_0 = \{x_0\in \RR\;|\;\exists\; |d(x_0)|<1,\exists\;e(x_0)=\pm 1
\mbox{ s.t. }\eqref{profileN}\mbox{ holds}\}.
\end{equation}
Let us state our first result:
\begin{thm}[Openness of $\RR_0$ and regularity of the blow-up surface]\label{threg}$ $\\
 The set $\RR_0$ is open and the blow-up surface $x\mapsto T(x)$ is of class $C^1$ on $\RR_0$.\\
 Moreover, for all $x_0\in \RR_0$, $\nabla T(x_0)=d(x_0)$ and $e(x_0)$ is constant on connected components of $\RR_0$, where $d(x_0)$ and $e(x_0)$ are defined in \eqref{defr0}.
\end{thm}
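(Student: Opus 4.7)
The plan is to work in self-similar variables and run a modulation argument around the $N$-parameter soliton family $\{e_0\kappa(d,\cdot):|d|<1\}$ (with $e_0=e(x_0)$, $d_0=d(x_0)$), using the Antonini--Merle Lyapunov functional together with continuous dependence in the original wave equation \eqref{equ}. Unlike in one space dimension \cite{MZcmp08}, no classification of stationary solutions of \eqref{eqw} in $\H$ is available for $N\ge 2$, so the whole argument must be performed strictly locally in a small $\H$-neighborhood of the single soliton $\kappa(d_0)$. The continuous dependence step propagates the closeness at one fixed self-similar time $s_*$ to a nearby base point $x_1$; the trapping/modulation step propagates it to $s=+\infty$; and a final geometric identification yields $\nabla T(x_0)=d_0$.

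For the perturbation step, given $\eta>0$ use \eqref{profileN} to pick $s_*$ so large that $\|(w_{x_0,T(x_0)}(s_*)-e_0\kappa(d_0),\,\partial_s w_{x_0,T(x_0)}(s_*))\|_\H\le\eta$. For $x_1$ near $x_0$ set $T_1(x_1):=T(x_0)+(x_1-x_0)\cdot d_0$; this is the choice dictated by Lorentz invariance that makes $e_0\kappa(d_0)$ a candidate limit for $w_{x_1,T_1(x_1)}$. Translating back to $u$ on a truncated light cone and using the Cauchy theory for \eqref{equ},
\[
\limsup_{x_1\to x_0}\left\|\vc{w_{x_1,T_1(x_1)}(s_*)-e_0\kappa(d_0)}{\partial_s w_{x_1,T_1(x_1)}(s_*)}\right\|_\H\le C\eta,
\]
and $w_{x_1,T_1(x_1)}$ is well defined on the whole unit ball for every $s\ge s_*$ by finite propagation.

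Next comes modulation and trapping. One writes
\[
\vc{w_{x_1,T_1(x_1)}(s)}{\partial_s w_{x_1,T_1(x_1)}(s)}=e_0\vc{\kappa(d(s))}{0}+\vc{q_1(s)}{q_2(s)},
\]
with $(q_1,q_2)$ orthogonal in $\H$ to $\Span\{\partial_{d_i}\kappa(d(s))\}_{i=1}^N$; the implicit function theorem determines $d(s)$ provided $\|(q_1,q_2)\|_\H$ stays small. The Antonini--Merle energy
\[
E(w,\partial_s w)=\iint\Big[\tfrac12(\partial_s w)^2+\tfrac12\big(|\nabla w|^2-(y\cdot\nabla w)^2\big)+\tfrac{p+1}{(p-1)^2}w^2-\tfrac{|w|^{p+1}}{p+1}\Big]\rho\,dy
\]
is nonincreasing on solutions of \eqref{eqw}. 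A spectral analysis of its Hessian at $\kappa(d_0)$ shows that on the orthogonal complement of the tangent space it is coercive modulo finitely many positive or neutral modes, which one kills either by enlarging the modulation or by a topological shooting as in \cite{MZcmp08}. The outcome is a trapping estimate: for $\eta$ small, $\sup_{s\ge s_*}(\|(q_1,q_2)(s)\|_\H+|d(s)-d_0|)\le C\eta$, and the dissipation of $E$ then forces $\|(q_1,q_2)(s)\|_\H\to 0$, so $d(s)\to d_\infty(x_1)$ with $|d_\infty(x_1)-d_0|\le C\eta<1$. If $T(x_1)\ne T_1(x_1)$, the convergence of $w_{x_1,T_1(x_1)}$ to a nontrivial soliton would force the self-similar rescaling between $T_1(x_1)$ and $T(x_1)$ to explode or vanish in finite renormalized time, contradicting either the convergence or the very definition of $T(x_1)$. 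Sharpening this comparison gives $T(x_1)=T(x_0)+(x_1-x_0)\cdot d_0+o(|x_1-x_0|)$, so $T$ is differentiable at $x_0$ with $\nabla T(x_0)=d_0$, and $w_{x_1,T(x_1)}\to e_0\kappa(d(x_1))$ with $d(x_1):=d_\infty(x_1)$. This yields $x_1\in\RR_0$, $e\equiv e_0$ near $x_0$, and uniformity of the modulation estimate in $x_1$ upgrades continuity of $d$ to $T\in C^1(\RR_0)$.

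The main obstacle is the trapping/modulation step. In one space dimension, trapping near a soliton can be obtained by combining the decrease of $E$ with the full classification of stationary solutions in $\H$; this classification is unavailable for $N\ge 2$, in part because of the countable families of radial stationary solutions found in \cite{BMWnonl07,BBMWnonl10}. One therefore needs a quantitative spectral description of the linearized self-similar operator around $\kappa(d_0)$: identification of its kernel with $\Span\{\partial_{d_i}\kappa(d_0)\}_{i=1}^N$, a count of its finitely many non-negative eigenvalues to be absorbed by modulation, and a coercivity bound on the remainder, uniformly in the subconformal range \eqref{condp}. It is on this step that most of the technical effort must be concentrated.
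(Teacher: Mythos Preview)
Your modulation/trapping outline near $\kappa(d_0)$ is essentially the content of Proposition~\ref{cortrap}, which the paper imports from \cite{MZtds}; you are right that this is where the analytic work lies, and your description of the spectral obstacles is accurate. The gap is in the surrounding logical structure.

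You assert that $w_{x_1,T_1(x_1)}$ is ``well defined on the whole unit ball for every $s\ge s_*$ by finite propagation,'' but this is not justified: unraveling the similarity variables, one needs $T_1(x_1)-e^{-s}<T(x_1+ye^{-s})$ for all $|y|<1$, and letting $s\to\infty$ this forces $T_1(x_1)\le T(x_1)$, which is exactly what you are trying to prove. If instead you modulate around the true vertex $(x_1,T(x_1))$, then $w_{x_1,T(x_1)}$ is indeed defined on the open unit ball for all $s$ by the $1$-Lipschitz property of $T$, but the trapping result is stated under the hypothesis $x_1\in\RR$, and you have not shown that nearby points are non-characteristic. The argument is circular: you need a cone of slope $<1$ at $x_1$ inside the domain to run the modulation to $s=+\infty$, yet you are trying to extract that very information from the modulation.

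The paper breaks this circularity with two ingredients you omit. First, differentiability at $x_0$ (Proposition~\ref{propdiff}) is obtained not by modulating at a nearby $x_1$ but via a rigidity theorem (Theorem~\ref{thrig}): a compactness argument upgrades the convergence $w_{x_0}(s)\to\kappa(d_0)$ from $|y|<1$ to a strictly larger ball $|y|<1/\delta_0'$ (Lemma~\ref{proplarger}), because any bounded eternal solution of \eqref{eqw} equal to $\kappa(d_0)$ on the unit ball must be $\kappa(d_0)$ everywhere; this larger-ball information is what gives $\nabla T(x_0)=d_0$. Second, to show nearby points lie in $\RR$, the paper runs a purely geometric contradiction argument (Lemma~\ref{proplip1/2}) bounding the local Lipschitz constant of $T$ by $(1+|d_0|)/2<1$; only after this is established does trapping (Lemma~\ref{lemp1}, which assumes $x\in\RR$) place those points in $\RR_0$. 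Without the rigidity theorem and the geometric Lipschitz step, your scheme cannot close.
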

\begin{nb} With the same proof as in one space dimension (see the remark following Theorem 1 in \cite{MZcmp08}), we know that $\RR\not= \emptyset$, for any blow-up solution. We don't know if such a result holds for $\RR_0$; we are only aware of some examples where $\RR_0\not=\emptyset$: either the explicit solutions such that initial data given by $\kappa(d,x)$ on some large enough ball, or also any of their perturbations, thanks to the stability of $\RR_0$ with respect to initial data stated in our following theorem \ref{thstab}. We expect that for some initial data, $\RR\backslash \RR_0\neq \emptyset$, in other words, that other stationary solutions of equation \eqref{equ}, different from $\pm\kappa(d,y)$ exist, as already proved by Bizo\'n and co-authors in \cite{BMWnonl07} and \cite{BBMWnonl10} when $N=3$ and $p=3$ or $p\ge 7$ is an odd integer. Nevertheless, we suspect those stationary solutions to be unstable. In other words, $\pm \kappa(d)$ for $|d|<1$ should be the only generic blow-up profiles. 
\end{nb}
\begin{nb} 
All the results in our paper are valid in the subconformal range $p<\frac{N+3}{N-1}$.\\
- The restriction to  $p<\frac{N+3}{N-1}$ comes from the existence in the similarity variables' setting \eqref{defw} of a Lyapunov functional (defined below in \eqref{defenergy}), which is localized in the backward light-cone, and which allows to control the blow-up rate and the whole blow-up dynamics (see \cite{MZajm03, MZimrn05, MZjfa07, MZajm11}).\\
- In the conformal case $p=\frac{N+3}{N-1}$, that Lyapunov functional persists (see \cite{MZma05}), though its dissipation degenerates to the boundary of the backward light-cone. Our results remain valid, 
at the expense of some more technical difficulties.\\
- If $\frac{N+3}{N-1}<p<\frac{N+2}{N-2}$, we don't know if other blow-up rates not given by the ODE are possible (see Killip, Stovall and Vi\c san \cite{KSVsurc12}, and Hamza and Zaag \cite{HZdcds13}, where an upper bound on the blow-up rate is given).\\
- In the Sobolev-critical case $p=\frac{N+2}{N-2}$, we know from Krieger, Schlag and Tataru \cite{KSTdmj09} as well as Hillairet and Rapha\"el \cite{HRapde12} that blow-up solutions with blow-up rates different from the solution of the ODE exist (see also Krieger, Nakanishi and Schlag \cite{KNSajm13, KNSdcds13}).\\
- If $p> \frac{N+2}{N-2}$, no results are available, not even on the blow-up rate question, which remains open. Let us mention that faster blow-up rates exist for the semilinear heat equation, for some exponents in this supercritical range (see Herrero and Vel\'azquez \cite{HVcras94}, Matano and Merle \cite{MMjfa09} and \cite{MMjfa11}).
\end{nb}
With almost the same proof as for Theorem \ref{threg}, we are able to derive the stability
of $\RR_0$  
with respect to initial data:
\begin{thm}[Stability of $\RR_0$ with respect to initial data]\label{thstab} 
Consider $\hat u(x,t)$ a blow-up solution of equation \eqref{equ} with initial data $(\hat u_0, \hat u_1)\in H^1\times L^2$ and $\hat x_0\in \hat \RR_0$. 
Then, there exists $\hat \epsilon_0>0$ such that for any $(u_0,u_1)$ with $\|(u_0,u_1)-(\hat u_0, \hat u_1)\|_{H^1\times L^2}\le \hat \epsilon_0$, the solution $u(x,t)$ of equation \eqref{equ} with initial data $(u_0,u_1)$ blows up in finite time with 
\[
B(\hat x_0, \hat \epsilon_0)\subset \RR_0.
\]
\end{thm}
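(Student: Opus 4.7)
The plan is to reduce Theorem~\ref{thstab} to Theorem~\ref{threg} via hyperbolic continuous dependence on initial data. Since Theorem~\ref{threg} asserts that $\RR_0$ is open, it suffices to prove that $\hat x_0 \in \RR_0$ for the perturbed solution $u$; the required ball then follows from openness. The first step transfers the closeness of $\hat w_{\hat x_0}$ to the profile $\hat e\,\kappa(\hat d,\cdot)$ over to $u$ at a single similarity time. Set $\hat T=\hat T(\hat x_0)$, $\hat d=d(\hat x_0)$, $\hat e=e(\hat x_0)$, and fix $\eta>0$ small. By \eqref{profileN} there is $s^*$ large with
\begin{equation*}
\left\|\vc{w^{(\hat u)}_{\hat x_0,\hat T}(s^*)}{\partial_s w^{(\hat u)}_{\hat x_0,\hat T}(s^*)}-\hat e\vc{\kappa(\hat d,\cdot)}{0}\right\|_{\q H}\le \eta.
\end{equation*}
The corresponding physical time $t^*=\hat T-e^{-s^*}$ satisfies $t^*<\hat T$, so the truncated cone $\{(x,t):0\le t\le t^*,\ |x-\hat x_0|\le \hat T-t\}$ is compact and contained in the closure of the domain of existence of $\hat u$ (by the Lipschitz character of $\hat T$). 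Standard $H^1\times L^2$ well-posedness of \eqref{equ} together with finite speed of propagation then yield: for $\hat\epsilon_0$ small enough (depending on $s^*$), $u$ is defined on this truncated cone and close to $\hat u$ there in the corresponding energy norms, so that $w^{(u)}_{\hat x_0,\hat T}(s^*)$ makes sense and
\begin{equation*}
\left\|\vc{w^{(u)}_{\hat x_0,\hat T}(s^*)}{\partial_s w^{(u)}_{\hat x_0,\hat T}(s^*)}-\hat e\vc{\kappa(\hat d,\cdot)}{0}\right\|_{\q H}\le 2\eta.
\end{equation*}

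The second step invokes a trapping statement built into the proof of Theorem~\ref{threg}: for $\eta$ sufficiently small, any trajectory of \eqref{eqw} whose datum at $s=s^*$ lies within $\q H$-distance $2\eta$ of $\hat e(\kappa(\hat d,\cdot),0)$ can, after adjusting the blow-up time $T_0$ near $\hat T$ and the Lorentz parameter $d$ near $\hat d$, be driven to converge in $\q H$ to $\hat e\,\kappa(d,\cdot)$ as $s\to\infty$. Applied to $w^{(u)}_{\hat x_0,\hat T}$, this yields a blow-up time $T(\hat x_0)$ close to $\hat T$, together with $e(\hat x_0)=\hat e$ and $d(\hat x_0)$ close to $\hat d$, such that \eqref{profileN} holds for $u$; hence $\hat x_0\in \RR_0$. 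Theorem~\ref{threg} applied to $u$ now delivers $\hat\epsilon_0>0$ with $B(\hat x_0,\hat\epsilon_0)\subset \RR_0$, and finite-time blow-up of $u$ at every point of this ball follows from the profile convergence.

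The only non-trivial ingredient is the trapping statement in the second step: it rests on the full spectral and modulation analysis of \eqref{eqw} around the soliton family $\{\pm\kappa(d)\}_{|d|<1}$, the single unstable eigendirection being absorbed by modulating the blow-up time. This is exactly the machinery the authors need to develop for Theorem~\ref{threg}, which is why, as they note, ``almost the same proof'' delivers Theorem~\ref{thstab}.
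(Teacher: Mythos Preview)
Your reduction has two genuine gaps.

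First, the trapping result you invoke is not what Proposition~\ref{cortrap} actually says. That proposition is stated for $w_{\bar x}$ with $\bar x\in\RR$: the hypothesis that $\bar x$ is non-characteristic is essential, since it is what guarantees that $w_{\bar x}(s)$ is defined and uniformly bounded for all large $s$ (via the covering argument of \cite{MZimrn05}). You apply it to $w^{(u)}_{\hat x_0,\hat T}$ with $\hat T=\hat T(\hat x_0)$ the blow-up time of the \emph{unperturbed} solution, and then ask the trapping to ``adjust'' the blow-up time. But nothing in the paper provides such a statement: Proposition~\ref{propexpo} modulates $\nu$ only while the solution stays close to the soliton family, and gives no mechanism to conclude that the perturbed $u$ actually blows up, let alone that $\hat x_0$ is non-characteristic for $u$. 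In the paper these two facts are obtained separately and by different means: blow-up and continuity of the blow-up time come from the Lyapunov functional and the negative-energy criterion (Lemma~\ref{lemcont}), while the non-characteristic property near $\hat x_0$ is obtained by a direct geometric contradiction argument (Step~3 of Section~\ref{secthstab}, Lemma~\ref{lemint} and Claim~\ref{cltech}), \emph{not} by trapping.

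Second, even if you could show $\hat x_0\in\RR_0$ for each perturbed $u$, invoking the openness in Theorem~\ref{threg} only yields \emph{some} ball $B(\hat x_0,r(u))\subset\RR_0$, with $r(u)$ a priori depending on the perturbation. The theorem asks for a single $\hat\epsilon_0$ that bounds both the size of the allowed perturbation \emph{and} the radius of the ball in $\RR_0$. The paper obtains this uniformity by running the geometric argument of Theorem~\ref{threg} again, but now with estimates (Lemma~\ref{lemdiff}) that are uniform over all nearby initial data and all nearby base points simultaneously. Your sketch does not address this uniformity at all, and it does not follow from openness alone.
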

\begin{nb} From the finite speed of propagation, we may use instead the $H^1\times L^2$ on a sufficiently large ball. We would like to point out that we show the continuity of the blow-up time with respect to initial data, near points in $\RR_0$. See Lemma \ref{lemcont} below.
\end{nb}
As a consequence of our results, we obtain the stability of the local minimal blow-up time, with respect to initial data, provided that the local minimum is achieved at some point in $\RR_0$. More precisely, this is our result:
\begin{coro}[Stability of the local minimal blow-up time]\label{stabmin}
Assume in addition of the hypotheses of Theorem \ref{thstab} that  $x\mapsto\hat T(x)$ achieved a strict local minimum at $\hat x_0$. 
Then, up to choosing $\hat \epsilon_0$ smaller, 
the function  $x\mapsto T(x)$ achieves a local minimum in the open ball $B(\hat x_0, \hat \epsilon_0)$ with $\nabla T(x_0)=0$ and $w_{x_0}(s) \to \pm\kappa_0$ as $s\to \infty$.\\ 
 The same statement holds for a local maximum  blow-up time. 
\end{coro}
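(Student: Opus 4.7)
The plan is to combine Theorems \ref{threg} and \ref{thstab} with Lemma \ref{lemcont} on a suitably small ball around $\hat x_0$, and to show that the perturbed blow-up time $T$ attains its minimum on this closed ball at an \emph{interior} point, which by the regularity statement of Theorem \ref{threg} is automatically a point of $\RR_0$ with vanishing gradient, hence of profile $\pm\kappa_0$. I focus on the local minimum case; the maximum case is symmetric.

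Since $\hat x_0 \in \hat \RR_0$, Theorem \ref{threg} tells us that $\hat T$ is $C^1$ near $\hat x_0$ with $\nabla \hat T(\hat x_0) = \hat d(\hat x_0)$, so the strict local minimum hypothesis forces $\hat d(\hat x_0) = 0$ (this in passing yields the profile $\hat w_{\hat x_0}(s)\to \pm\kappa_0$). Choose $r>0$ small enough that $\bar B(\hat x_0, r)\subset\hat \RR_0$ and $\hat T(x)>\hat T(\hat x_0)$ on $\bar B(\hat x_0, r)\setminus\{\hat x_0\}$, and set
\[
\delta := \min_{x\in\partial B(\hat x_0, r)} \hat T(x) - \hat T(\hat x_0) > 0,
\]
which is positive by compactness and the strict minimum. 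By Theorem \ref{thstab} (applied with radius $r$ in place of $\hat\epsilon_0$), for sufficiently small perturbations we have $B(\hat x_0, r)\subset \RR_0$, so $T$ is $C^1$ on $B(\hat x_0, r)$ with $\nabla T(x)=d(x)$.

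The core step is to obtain $T$ uniformly close to $\hat T$ on $\bar B(\hat x_0, r)$. By compactness, cover the closed ball by finitely many balls of radius $\eta < \delta/8$ centered at points $x_1,\dots,x_k\in\hat \RR_0$. Applying Lemma \ref{lemcont} at each $x_i$ and taking the minimum of the resulting perturbation thresholds, we further shrink $\hat \epsilon_0$ so that $|T(x_i)-\hat T(x_i)| < \delta/8$ for every $i$. Since both $T$ and $\hat T$ are 1-Lipschitz in space, for any $x\in \bar B(\hat x_0, r)$ with nearest net point $x_i$,
\[
|T(x)-\hat T(x)| \le |T(x)-T(x_i)| + |T(x_i)-\hat T(x_i)| + |\hat T(x_i)-\hat T(x)| < 2\eta + \delta/8 < \delta/2.
\]

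Consequently, on $\partial B(\hat x_0, r)$ we have $T(x) > \hat T(x)-\delta/2 \ge \hat T(\hat x_0) + \delta/2$, whereas $T(\hat x_0) < \hat T(\hat x_0) + \delta/8$, so the continuous function $T$ attains its minimum over $\bar B(\hat x_0, r)$ at some interior point $x_0$. Since $x_0\in\RR_0$ and $T$ is $C^1$ there with $\nabla T(x_0)=d(x_0)$, the interior minimum condition forces $d(x_0)=0$, whence $w_{x_0}(s)\to\pm\kappa(0,\cdot)=\pm\kappa_0$ as $s\to\infty$ by definition of $\RR_0$. The local maximum case is identical upon reversing inequalities. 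The main technical obstacle is that Lemma \ref{lemcont} only supplies \emph{pointwise} continuity of the blow-up time in initial data, whereas to preserve the interior extremum we need \emph{uniform} closeness of $T$ to $\hat T$ on a ball; this gap is bridged precisely by the finite-covering argument above combined with the 1-Lipschitz property of both blow-up surfaces.
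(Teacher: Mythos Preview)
Your proof is correct and follows essentially the same approach as the paper: apply Theorem~\ref{thstab} to ensure a ball around $\hat x_0$ lies in $\RR_0$, use a compactness/covering argument together with Lemma~\ref{lemcont} to make $T$ uniformly close to $\hat T$, deduce that the minimum of $T$ over the closed ball is attained in the interior, and conclude via the $C^1$ regularity of Theorem~\ref{threg} that $\nabla T(x_0)=d(x_0)=0$. The only cosmetic difference is that the paper covers the sphere $\partial B(\hat x_0,\epsilon_1)$ and uses the spatial continuity already built into Lemma~\ref{lemcont}, whereas you cover the full closed ball and pass from pointwise to uniform closeness via the $1$-Lipschitz property of both blow-up surfaces; the two routes are equivalent.
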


\medskip

Our results rely on two features:\\
- a good understanding of the dynamics of equation \eqref{eqw}, near the set $\{\pm \kappa(d)\}$; this has already been done in \cite{MZtds}, and we recall that result in Proposition \ref{cortrap} below;\\
-  the following rigidity theorem for solutions to equation \eqref{eqw}:
\begin{thm}[A rigidity theorem for equation \eqref{eqw}] \label{thrig} Consider $w(y,s)$ a solution of equation \eqref{eqw} defined for all $|y|<A^*$ and $s\in \m R$ for some $A^*>1$ and satisfying for all $s\in \m R$,
\begin{equation}
\|(w(s), \partial_s w(s))\|_{H^1\times L^2 (|y|<A^*)}\le M^* \mbox{ and }\forall |y|<1,\;\;w(y,s) = e^*\kappa(d^*,y),\label{hypw}
\end{equation}
for some $e^*=\pm 1$, $|d^*|<1$ and $M^*>0$.
Then,
\begin{equation}\label{sol}
\forall |y|<A^*\mbox{ and }s\in \m R,\;\;
w(y,s) = e^*\kappa(d^*,y).
\end{equation}
\end{thm}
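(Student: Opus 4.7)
The plan is to combine finite propagation speed in the original $(x,t)$ variables with a local unique-continuation argument across the characteristic cone $|y|=1$.

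\smallskip

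\textit{Step 1 (finite-speed reduction).} I first claim it suffices to show the local extension $w(y,s) = e^*\kappa(d^*,y)$ on $|y|<1+\eta$ for all $s \in \m R$, for some small $\eta>0$. Rewriting this in the original variables via \eqref{defw}, it says $u(x,t)=u^*(x,t)$ on the slightly enlarged backward cone $\{|x-x_0|<(1+\eta)(T_0-t)\}$, where $u^*$ is the explicit solution of \eqref{equ} corresponding to $e^*\kappa(d^*,\cdot)$. For any $(x_1,t_1)$ with $|x_1-x_0|<A^*(T_0-t_1)$, letting $t_0\to -\infty$ the inequality $|x_1-x_0|+(t_1-t_0) \le (1+\eta)(T_0-t_0)$ eventually holds (the right-hand side grows like $\eta|t_0|$), so the backward light cone of $(x_1,t_1)$ at time $t_0$ sits inside the region where $u=u^*$. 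Finite propagation speed for \eqref{equ} then forces $u(x_1,t_1) = u^*(x_1,t_1)$.

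\smallskip

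\textit{Step 2 (Lorentz reduction).} To obtain this local extension, I would reduce to $d^*=0$ using the Lorentz invariance of \eqref{equ}. The soliton $\kappa(d^*,\cdot)$ is obtained from the constant soliton $\kappa_0$ by a Lorentz boost of velocity $d^*$ centered at $(x_0,T_0)$; applying this boost to $u$ transforms the hypotheses of the theorem into those with $d^*=0$, where $u^*(t) = e^*\kappa_0(T_0-t)^{-2/(p-1)}$ is the spatially constant ODE solution and, in similarity variables, the soliton is simply the constant $e^*\kappa_0$.

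\smallskip

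\textit{Step 3 (local rigidity near $|y|=1$).} In the reduced setting, I set $v = w - e^*\kappa_0$. It vanishes on $|y|<1$ for all $s\in\m R$ and satisfies a linear wave-type equation (in similarity variables) with bounded coefficients, of the form $\partial_s^2 v - \q L v + \tfrac{2(p+1)}{(p-1)^2}v - F(w)v = -\tfrac{p+3}{p-1}\partial_s v - 2y\cdot\nabla\partial_s v$. I would combine an adapted energy estimate on the annulus $\{1\le |y|<1+\eta\}$---using a degenerate weight in the spirit of $\rho$ in \eqref{defro} but vanishing on the outer boundary $|y|=1+\eta$---with the trapping dynamics near the soliton manifold given by Proposition~\ref{cortrap}. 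The crucial structural inputs are the global existence in $s\in\m R$, the uniform $\H$-bound on $(w,\partial_s w)$, and the fact that $v$ vanishes on the entire cylinder $\{|y|<1\}\times\m R$ rather than merely on a single time slice; these should together exclude any nontrivial transverse deformation of the soliton and force $v\equiv 0$ on $|y|<1+\eta$.

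\smallskip

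The hard part will be Step~3. The hypersurface $|y|=1$ is characteristic for \eqref{eqw} (it corresponds to the null cone of $(x_0,T_0)$ in $(x,t)$-coordinates), so classical Hörmander--Carleman unique continuation across non-characteristic surfaces does not apply, and elementary examples such as $u(x,t) = g(x-t)$ with $g$ supported in $\{x-t\le 0\}$ show that unique continuation across a characteristic null cone genuinely fails for the free wave equation $\Box u=0$. The rigidity must therefore be extracted from the specifically semilinear structure, the \emph{global-in-$s$} existence with uniform $\H$-bound, and the dynamical trapping near the soliton set from \cite{MZtds}, rather than from any purely linear unique continuation principle.
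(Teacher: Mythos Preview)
Your Steps 1 and 2 are sound reductions (Step 2 is in fact unnecessary in the paper's argument, but it is harmless). The genuine gap is Step 3: you correctly diagnose that $|y|=1$ is characteristic and that linear unique continuation fails across it, but you do not supply any concrete mechanism. ``An adapted energy estimate on the annulus combined with the trapping dynamics'' is a hope, not an argument: Proposition~\ref{cortrap} is a forward-in-$s$ trapping statement on the unit ball and says nothing about the annulus $\{1\le |y|<1+\eta\}$, and you give no indication of what weight or what inequality would close an estimate there.

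The paper bypasses the characteristic-surface obstruction entirely by \emph{recentering} rather than continuing. For each $a\in\m R^N$ it looks at the similarity variables $w_a=w_{a,\,d^*\cdot a}$ centered at the point $(a,d^*\cdot a)$, which lies on the singular hyperplane of the target profile. A direct computation shows that, in these new variables, the region where $w$ is already known to equal $\kappa(d^*)$ covers all of the unit ball except a thin boundary layer of $\rho$-measure $O(e^{\alpha s/2})$ as $s\to -\infty$; hence $(w_a(s),\partial_s w_a(s))\to(\kappa(d^*),0)$ in $\H$ as $s\to -\infty$. One then applies Proposition~\ref{propexpo} (the \emph{exponential-decay} modulation estimate, not Proposition~\ref{cortrap}): for any $s'\le s$ one has $\|q(s)\|_{\H}\le Ce^{-\mu_0(s-s')}$, and sending $s'\to -\infty$ forces $q\equiv 0$, i.e.\ $w_a=\kappa_1^*(d(s),\nu(s),\cdot)$ on the whole unit ball. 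Matching with $\kappa(d^*)$ on the open subset $K_{a,s}$ pins down $d(s)=d^*$, $\nu(s)=0$. Varying $a$ then covers the full cone, which is your Step~1 in disguise. The missing idea in your proposal is precisely this recentering trick together with the use of Proposition~\ref{propexpo} \emph{backward} in $s$; without it there is no way to transfer information across $|y|=1$.
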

Using the similarity variables' formulation \eqref{defw}, we get 
 the following rigidity theorem for ancient solutions of equation \eqref{equ} defined in a {\it non}-characteristic cone and close to the set $\{\pm \kappa(d)\}$ as $s\to -\infty$: 

\medskip

\noindent {\bf Theorem \ref{thrig}'}
(A rigidity theorem for equation \eqref{equ}){\it Consider $u(x,t)$ a solution of equation \eqref{equ} defined for all $(x,t) \in \q C_{x^*, T^*, \delta^*}$ introduced in \eqref{defcone}, which satisfies for all $s\in \m R$:
\begin{equation}
\|(w_{x^*,T^*}(s), \partial_s w_{x^*, T^*}(s))\|_{H^1\times L^2 (|y|<\frac 1{\delta^*})}\le M^*
\mbox{ and }
\forall |y|<1,\;\;w_{x^*,T^*}(y,s) = e^*\kappa(d^*,y),\label{hyp'}
\end{equation}
where $w_{x^*,T^*}$ is defined in \eqref{defw}, for some $x^*\in \m R^N$, $T^*\in \m R$, $\delta^*\in(0,1)$, $e^*=\pm 1$, $|d^*|<1$ and $M^*>0$. Then
for all $(x,t) \in C_{x^*, T^*, \delta^*}$,
\begin{equation*}
u(x,t) = e^*\kappa_0 \frac{(1-|d^*|^2)^{\frac 1{p-1}}}{(T^*-t +d^*.(x-x^*))^{\frac 2{p-1}}}.
\end{equation*}
}
\begin{nb} 
 From the conclusion, we see that $u$
can be extended to a set larger than $C_{x^*, T^*, \delta^*}$, namely the half-space $\{(x,t)\;|\;T^*-t +d^*.(x-x^*)>0\}$.
\end{nb}

Compared to the analysis of the one-dimensional case treated in \cite{MZcmp08}, the main difficulties in the adaptation lay in the proof of the rigidity result (Theorem \ref{thrig} above):\\
- we need a very good understanding of the linearized operator of equation \eqref{eqw} near the stationary solution $\kappa(d)$ \eqref{defkd} (see Proposition \ref{propexpo} below); such an operator was studied in details in the companion paper \cite{MZtds}, showing $N-1$ new degenerate directions coming from the derivatives of $\kappa(d)$ with respect to the angular directions of $d$;\\
- we also need to know properties of the wave operator, and such properties depend on the dimension (e.g. Strichartz estimates); this is mainly done in Appendix \ref{applemliouv}.\\
Note that the derivation of Theorem \ref{threg} needs less adaptations of the one-dimensional case. As for Theorem \ref{thstab}, some steps are similar to the proof of Theorem \ref{threg}, and others are original. 
In any case, in the whole paper, we only sketch the steps where some adaptation is needed, and refer to \cite{MZcmp08} for those where the adaptation is straightforward (see Section \ref{secreg} below).

\bigskip

We proceed in four sections:\\
- In Section \ref{secdyn}, we recall from \cite{MZtds} some results about the behavior of solutions of equation \eqref{eqw} near the set $\{\pm \kappa(d)\}$;\\
- In Section \ref{secreg}, we assume Theorem \ref{thrig} and prove Theorem \ref{threg};\\
- In Section \ref{secstab}, we assume Theorem \ref{thrig} and prove Theorem \ref{thstab} together with Corollary \ref{stabmin};\\
- In Section \ref{secrig}, we prove Theorem \ref{thrig} and Theorem \ref{thrig}'.\\
{\bf Acknowledgement}. The authors wish to thank the referee for his valuable comments which undoubtedly improved the presentation of the results.
\section{Behavior of solutions of equation \eqref{eqw} near the set $\{\pm \kappa(d)\}$}\label{secdyn}
A key step towards Theorem \ref{threg} is to understand the dynamics of equation \eqref{eqw} for initial data near the set $\{\pm\kappa(d)\;|\;|d|<1\}$. This was done in \cite{MZtds} (see Theorem 1 in that paper): 
\begin{prop}[Behavior of solutions of equation \eqref{eqw} near $\pm \kappa(d,y)$; see \cite{MZtds}]\label{cortrap} 
There exist $\eb>0$, $K_0>0$ and $\mu_0>0$ such that 
for any $\bar x\in \RR$, $\bar s\ge -\log T(\bar x)$, $\bar\omega=\pm 1$ and $|\bar d|<1$, if
\begin{equation*}
\ee\equiv\left \|\vc{w_{\bar x}(\bar s)}{\partial_s w_{\bar x}(\bar s)}-\bar\omega\vc{\kappa(\bar d)}{0}\right\|_{\H}\le \eb,
\end{equation*}
then,
there exists $|d_\infty(\bar x)|<1$ such that
\begin{equation}\label{conv00}
\forall s\ge \bar s,\;\;\|(w_{\bar x}(s), \partial_s w_{\bar x}(s))- (\kappa(d_\infty),0)\|_{\q H}\le K_0\ee e^{-\mu_0(s-\bar s)}
\end{equation}
and 
\begin{equation*}
|\arg\tanh |\bar d|-\arg\tanh |d_\infty||+\frac{|\bar d-d_\infty|}{\sqrt{1-|\bar d|}}\le K_0\ee.
\end{equation*}
\end{prop}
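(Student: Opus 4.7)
The plan is to combine modulation theory around the $N$-dimensional family $\{\bar\omega\,\kappa(d):|d|<1\}$ with a trapping/energy argument based on the spectral analysis of the linearization, which is the content of the companion paper \cite{MZtds}. Since $\|\kappa(d_1)+\kappa(d_2)\|_{\q H}$ is bounded away from zero uniformly in $d_1,d_2$, the sign $\bar\omega$ is locked for $\bar\epsilon\le\eb$, so I would decompose
\[
\vc{w_{\bar x}(s)}{\partial_s w_{\bar x}(s)}=\bar\omega\vc{\kappa(d(s))}{0}+q(y,s),
\]
and fix $d(s)\in B(0,1)$ by imposing $N$ orthogonality conditions that cancel the zero modes $\partial_{d_i}\kappa(d(s))$ ($i=1,\dots,N$) of the linearization $L_{d(s)}$. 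The Gram matrix of these $N$ directions in $\q H$ is invertible on compact sets of the unit ball, giving a smooth modulation with $|\dot d(s)|\le C\|q(s)\|_{\q H}$.

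Next I would invoke the spectral description of $L_d$ from \cite{MZtds}: on the $\q H$-orthogonal of the $N$ zero modes, the spectrum consists of a single simple positive eigenvalue $\lambda_+>0$ (the dilation direction) and a remainder yielding exponential decay at some rate $2\mu_0>0$. Writing $q=\alpha_+(s)\,Y_+ + q_-$ accordingly, the equation on $q$ produces
\[
\dot\alpha_+=\lambda_+\alpha_+ + O(\|q\|_{\q H}^2),\qquad \tfrac{d}{ds}\Phi(q_-)\le -2\mu_0\,\Phi(q_-)+O(\|q\|_{\q H}^3),
\]
where $\Phi$ is a quadratic Lyapunov functional equivalent to $\|q_-\|_{\q H}^2$, built from the energy of \eqref{eqw} used in \cite{MZajm03,MZjfa07}.

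The crucial step is trapping the unstable coordinate $\alpha_+$. Since $\bar x\in\RR$, the Lyapunov functional provides a uniform $\q H$-bound for $w_{\bar x}(s)$ on $[\bar s,\infty)$; thus the trajectory cannot exit the neighborhood of $\{\bar\omega\kappa(d)\}$ by having $|\alpha_+(s)|\sim e^{\lambda_+ s}$. A differential-inequality argument on the exit time, combined with a one-dimensional topological (shooting) argument on $\alpha_+(\bar s)$ if needed, forces $|\alpha_+(s)|\le C\|q_-(s)\|_{\q H}$, and the Lyapunov dissipation then gives
\[
\|q(s)\|_{\q H}\le K_0\,\bar\epsilon\, e^{-\mu_0(s-\bar s)},
\]
which is \eqref{conv00} with $d_\infty(\bar x):=\lim_{s\to\infty}d(s)$. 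The limit exists because $|\dot d|\le C\|q\|_{\q H}$ is exponentially integrable, and integrating yields $|d_\infty-\bar d|\le C\bar\epsilon$. The sharper estimate involving $\arg\tanh|d|$ and the denominator $\sqrt{1-|\bar d|}$ encodes the fact that the soliton family is a Lorentz orbit and is most naturally parametrized by hyperbolic coordinates on the ball; rewriting the modulation equation in those coordinates produces the stated bound.

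The main obstacle in this plan is the spectral input. Equation \eqref{eqw} is non-selfadjoint (because of the transport term $-\tfrac{p+3}{p-1}\partial_s-2y\cdot\nabla\partial_s$), so the $N$ zero modes can produce Jordan blocks and the eigenspaces are not $\q H$-orthogonal; ensuring that exactly one positive eigenvalue is present and that the complementary spectrum gives a uniform decay rate $\mu_0$ requires the careful functional setting developed in \cite{MZtds}, in particular the identification of the $N-1$ new angular degenerate directions absent in the one-dimensional case. Once that is in hand, the rest of the argument is an adaptation of the one-dimensional scheme of \cite{MZjfa07,MZcmp08}.
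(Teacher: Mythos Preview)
The paper does not prove this proposition: it is quoted from the companion paper \cite{MZtds} (Theorem~1 there), and Section~\ref{secdyn} only recalls the statement. So there is no in-paper proof to compare against; the benchmark is the argument of \cite{MZtds}.

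Your outline matches that architecture: modulate around $\{\bar\omega\kappa(d)\}$, import the spectral description of the linearized operator (one unstable direction, $N$ null directions from $\partial_{d_i}\kappa$, exponentially contracting remainder), and use $\bar x\in\RR$ to exclude the unstable scenario. Your closing paragraph, flagging the non-selfadjoint spectral analysis and the $N-1$ new angular degenerate directions as the real obstacle, is exactly the point of \cite{MZtds}.

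One correction. You suggest that ``a one-dimensional topological (shooting) argument on $\alpha_+(\bar s)$'' may be needed to force $|\alpha_+(s)|\le C\|q_-(s)\|_{\q H}$. There is no shooting: $\alpha_+(\bar s)$ is determined by the given trajectory $w_{\bar x}$, not a free parameter. In \cite{MZtds} the unstable direction is absorbed by enlarging the modulation to the $(N{+}1)$-parameter family $\kappa^*(d,\nu)$ of \eqref{defk*}; this is precisely Proposition~\ref{propexpo}. With that choice the residual $q$ decays exponentially with no trapping needed. One then argues that the extra parameter satisfies $\nu(s)\to 0$: if $\nu(s)\sim\mu e^s$ with $\mu<0$, the profile $\kappa^*$ blows up at finite $s$, contradicting global definition of $w_{\bar x}$; if $\mu>0$, $\kappa^*\to 0$ and $E(w_{\bar x}(s))\to 0$, contradicting the energy lower bound available at non-characteristic points. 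Your sentence ``the trajectory cannot exit the neighborhood'' points in this direction, but the uniform $\q H$-bound alone is not the mechanism; the dichotomy finite-time blow-up versus collapse to zero, ruled out by $\bar x\in\RR$, is.
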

We also need to recall from \cite{MZtds} an exponential convergence property for solutions of equation \eqref{eqw} near $\{\pm \kappa(d)\}$. In order to do so, we need to introduce $\kappa^*(d,\mu e^s,y)=(\kappa_1^*, \kappa_2^*)(d,\mu e^s,y)$, given for all $|d|<1$ and $\nu >-1+|d|$ by 
\begin{equation}\label{defk*}
\kappa_1^*(d,\nu, y) =
\ds\kappa_0\frac{(1-|d|^2)^{\frac 1{p-1}}}{(1+d.y+\nu)^{\frac 2{p-1}}},\;
\kappa_2^*(d,\nu, y) = \nu \pnu \kappa_1^*(d,\nu, y) =
\ds-\frac{2\kappa_0\nu}{p-1}\frac{(1-|d|^2)^{\frac 1{p-1}}}{(1+d.y+\nu)^{\frac {p+1}{p-1}}}.
\end{equation}
Note that for any $\mu\in \m R$, the function $\kappa_1^*(d,\mu e^s, y)$ is an explicit solution of equation \eqref{eqw}.\\
In Proposition 3.9 of \cite{MZtds}, we proved the following:
\begin{prop}[Exponential convergence of solutions of \eqref{eqw} near $\{\kappa^*(d,\nu)\}$; see \cite{MZtds}] \label{propexpo} There exists $\delta_2>0$ such that for any $s_1\ge s_0$, if $w\in C([s_0,s_1],\q H)$ is a solution of equation \eqref{eqw} satisfying
\begin{equation}\label{proche0}
\forall s\in [s_1,s_2],\;\;\left\|\vc{w(s)}{\partial_s w(s)}-\bar\omega\vc{\kappa(\bar d(s))}{0}\right\|_{\q H}\le \delta_2,
\mbox{ with }|\bar d(s)|<1\mbox{ and }\bar \omega=\pm 1, 
\end{equation}
then, there exist $C^1$ parameters $|d(s)|<1$ and $\nu(s) >-1+|d(s)|$ such that
\[
\forall s\in [s_0,s_1],\;\;
\|q(s)\|_{\q H}\le \frac{e^{-\delta_2(s-s_0)}}{\delta_2}\|q(s_0)\|_{\q H}
\]
where $q(y,s) =(w(y,s),\partial_s w(y,s)) - \bar \omega \kappa^*(d(s), \nu(s))$ and $\kappa^*$ is defined in \eqref{defk*}.
\end{prop}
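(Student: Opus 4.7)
The plan is to set up a modulation argument around the extended manifold of explicit solutions $\{\bar\omega\kappa^*(d,\nu)\;|\;|d|<1,\;\nu>-1+|d|\}$, and combine it with a coercivity/Lyapunov estimate on the orthogonal complement of the null and unstable directions of the linearized operator. Recall that the linearization of \eqref{eqw} around $\bar\omega\kappa(\bar d)$ has a finite-dimensional non-negative spectral part whose generalized eigenspace contains $N$ tangent directions coming from $\partial_{d_i}\kappa(\bar d)$ and one direction coming from the scaling parameter in $\kappa^*(\bar d,\nu)$, the latter being also responsible for the only unstable eigendirection. The use of $\kappa^*$ instead of $\kappa$ as a reference is precisely what allows us to absorb this unstable mode inside the modulation, so that the residual $q$ will only have to be controlled on a subspace where the quadratic form of the energy is coercive.

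First, I would construct $C^1$ parameters $d(s)$ and $\nu(s)$ by the implicit function theorem: under the smallness assumption \eqref{proche0}, we can find $(d(s),\nu(s))$ close to $(\bar d(s),0)$ such that
\[
q(y,s) := \vc{w(y,s)}{\partial_s w(y,s)} - \bar\omega\,\kappa^*(d(s),\nu(s),y)
\]
satisfies $N+1$ orthogonality conditions against a suitable basis $(\psi_0,\dots,\psi_N)$ of the non-negative spectral subspace of the linearized operator at $\bar\omega\kappa(d(s))$ (with respect to the $\H$-inner product weighted by $\rho$). The non-degeneracy of the Gram matrix at the reference point, established in \cite{MZtds}, guarantees that this modulation is well defined and $C^1$ in $s$ as long as \eqref{proche0} holds.

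Next, I would derive the equation for $q$ by substituting the decomposition into \eqref{eqw}; this yields a linear part $Lq$ (the linearization at $\bar\omega\kappa^*(d,\nu)$), a quadratic nonlinearity $R(q)=O(\|q\|_\H^2)$, and source terms proportional to $\dot d(s)$ and $\dot\nu(s)$. Differentiating the orthogonality relations then produces a linear algebraic system for $(\dot d,\dot \nu)$ with the same Gram matrix as above, allowing us to estimate $|\dot d|+|\dot\nu|\lesssim \|q\|_\H$. For the key energy estimate, I would use the localized Lyapunov functional $E(w,\partial_s w)$ associated to \eqref{eqw} (the one that enters the whole program since \cite{MZajm03}). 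Expanding $E$ at $\bar\omega\kappa^*(d(s),\nu(s))$ gives a quadratic form $\Phi(q(s))$ in $q$, which, thanks to the orthogonality conditions, satisfies a coercivity estimate $\Phi(q)\geq c\|q\|_\H^2$ on the constrained subspace, while the dissipation of $E$ along \eqref{eqw} provides a term $-\int y\cdot$-type control that bounds $\|q\|_\H^2$ from above by $-\frac{d}{ds}\Phi(q)$ up to modulation errors of size $O(\|q\|_\H^2)$ that can be absorbed for $\delta_2$ small. Integrating the resulting differential inequality $\frac{d}{ds}\Phi(q)\le -2\delta_2\Phi(q)$ yields the announced exponential decay with rate $\delta_2$.

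The hard part is the spectral analysis of the linearized operator on which the coercivity rests: in dimension $N\ge 2$, the family $\kappa(d)$ produces $N$ null directions $\partial_{d_i}\kappa(\bar d)$, of which $N-1$ are new with respect to the one-dimensional case and correspond to angular variations of $d$; these appear as genuinely degenerate (not merely in a Jordan block) and must be identified as non-negative eigendirections and paired with the corresponding modulation parameters in $d(s)$. This is carried out in the companion paper \cite{MZtds}, where the full non-negative generalized eigenspace is shown to be exactly $(N+1)$-dimensional and generated by $\partial_{d_1}\kappa,\dots,\partial_{d_N}\kappa$ together with the $\kappa^*$-direction; the remaining spectrum is bounded above by a strictly negative constant, which fixes the rate $\delta_2$ in the statement. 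Once this spectral picture is in hand, the modulation/energy argument sketched above delivers the conclusion.
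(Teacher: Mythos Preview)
The paper does not prove this proposition: it is merely recalled here, and the proof is entirely delegated to Proposition~3.9 of the companion paper \cite{MZtds}. So there is no ``paper's own proof'' to compare against beyond the citation itself. Your sketch is consistent with that: you correctly identify that the substance---the spectral analysis of the linearized operator around $\kappa(d)$, the $(N+1)$-dimensional non-negative generalized eigenspace, and the resulting coercivity on the orthogonal complement---is carried out in \cite{MZtds}, and you outline the standard modulation/energy scheme built on top of it.

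One point to be cautious about in your sketch: the step where you say the dissipation of the Lyapunov functional $E$ ``bounds $\|q\|_{\q H}^2$ from above by $-\frac{d}{ds}\Phi(q)$'' is too optimistic as written. The dissipation of $E$ along \eqref{eqw} is a weighted boundary-type term involving $(\partial_s w)^2$ with the weight $\rho/(1-|y|^2)$, not a full $\q H$-norm of $q$; it does not by itself give the differential inequality $\frac{d}{ds}\Phi(q)\le -2\delta_2\Phi(q)$. In \cite{MZtds} the exponential decay is obtained instead by projecting the equation for $q$ onto the negative spectral subspace and exploiting the spectral gap directly (together with the ODEs for the projections onto the non-negative modes, which are killed by the choice of $d(s),\nu(s)$), rather than by a naive linearization of $E$. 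Your high-level picture---modulation to remove non-negative modes, then a linear estimate with a spectral gap---is correct, but the mechanism producing the decay is the spectral gap of the linearized operator, not the dissipation of $E$.
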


\section{Openness of $\RR_0$ and $C^1$ regularity of the blow-up surface on $\RR_0$}\label{secreg}
In this section, we assume the rigidity result of Theorem \ref{thrig} and prove Theorem \ref{threg}.
Like for the proof of Theorem 1 page 58 in \cite{MZcmp08}, we proceed in two subsections:

- we first consider $x_0\in \RR_0$ and show that $T(x)$ is differentiable at $x_0$ with $\nabla T(x_0) = d(x_0)$ defined in \eqref{defr0};

- then, we give the proof of Theorem \ref{threg}.

\subsection{Differentiability of the blow-up surface on $\RR_0$}
We prove here an analogous statement to Proposition 2.1 page 60 in \cite{MZcmp08}:
\begin{prop}[Differentiability of the blow-up surface on $\RR_0$]\label{propdiff} If $x_0\in \RR_0$, then $x\mapsto T(x)$ is differentiable at $x=x_0$ and 
\begin{equation}\label{diff}
\nabla T(x_0)=d(x_0)
\end{equation}
 introduced in \eqref{defr0}.
\end{prop}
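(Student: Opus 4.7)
The natural candidate for $\nabla T(x_0)$ is $d(x_0)$, motivated by the fact that the explicit traveling wave
\[
U^*(x,t)=e(x_0)\kappa_0\frac{(1-|d(x_0)|^2)^{1/(p-1)}}{(T(x_0)-t+d(x_0)\cdot(x-x_0))^{2/(p-1)}}
\]
associated with the limit profile $e(x_0)\kappa(d(x_0))$ blows up exactly on the hyperplane $T^*(x):=T(x_0)+d(x_0)\cdot(x-x_0)$. The plan is, for any sequence $x_n\to x_0$, to prove $T(x_n)-T^*(x_n)=o(|x_n-x_0|)$. The (already known) openness of $\RR$ and the $1$-Lipschitz property of $T$ on $\RR$ ensure that, for $x_n$ close enough to $x_0$, $x_n\in\RR$ and $T(x_n)\to T(x_0)$.

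Given $\eta>0$, I would first fix $s_0$ so that $\|(w_{x_0},\partial_s w_{x_0})(s_0)-e(x_0)(\kappa(d(x_0)),0)\|_\H<\eta$. Setting $t_0=T(x_0)-e^{-s_0}$ and $\hat s_n=-\log(T(x_n)-t_0)$, so that the slice $\{t=t_0\}$ corresponds to $s_0$ in the frame centered at $x_0$ and to $\hat s_n$ in the frame centered at $x_n$, the change of variables \eqref{defw} yields
\[
w_{x_n}(y,\hat s_n)=\lambda_n^{2/(p-1)}\,w_{x_0}(z_n+y\lambda_n,\,s_0),\quad \lambda_n:=\frac{T(x_n)-t_0}{e^{-s_0}},\ z_n:=\frac{x_n-x_0}{e^{-s_0}},
\]
and a direct algebraic identity using \eqref{defkd} and \eqref{defk*} gives
\[
\lambda_n^{2/(p-1)}\kappa\bigl(d(x_0),z_n+y\lambda_n\bigr)=\kappa_1^*\bigl(d(x_0),\nu_n^0,y\bigr),\quad \nu_n^0=\frac{T^*(x_n)-T(x_n)}{T(x_n)-t_0}.
\]
This expresses the Lorentz-invariance of $U^*$: when $T(x_n)=T^*(x_n)$, $\nu_n^0$ vanishes and the right-hand side reduces to $\kappa(d(x_0))$. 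A similar computation for the time-derivative component yields
\[
\bigl\|(w_{x_n},\partial_s w_{x_n})(\hat s_n)-e(x_0)\kappa^*\bigl(d(x_0),\nu_n^0\bigr)\bigr\|_\H\le C\eta.
\]

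Using the $1$-Lipschitz bound on $T$, one has $|\nu_n^0|\lesssim |x_n-x_0|/e^{-s_0}$, which is small whenever $|x_n-x_0|\ll e^{-s_0}$; under this condition $\kappa_1^*(d(x_0),\nu_n^0)$ is $\H$-close to $\kappa(d(x_0))$, hence $w_{x_n}(\hat s_n)$ lies within $O(\eta)$ of $e(x_0)\kappa(d(x_0))$ in $\H$, and Proposition \ref{cortrap} applies, yielding trapping and exponential convergence $w_{x_n}(s)\to e(x_0)\kappa(d_\infty(x_n))$. Modulating in the $\kappa^*$-family via Proposition \ref{propexpo} on $[\hat s_n,\infty)$, the $C^1$ parameters $(d_n,\nu_n)$ satisfy $\nu_n(\hat s_n)\simeq \nu_n^0$ while $\nu_n(s)\to 0$ (since $\kappa(d_\infty)=\kappa^*(d_\infty,0)$). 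Integrating the $\nu_n$-ODE — whose right-hand side is controlled by $\|q_n(s)\|_\H\le Ce^{-\delta_2(s-\hat s_n)}\eta$ — gives $|\nu_n^0|\le C\eta$, whence
\[
|T(x_n)-T^*(x_n)|=|\nu_n^0|\,(T(x_n)-t_0)\le C\eta\,(e^{-s_0}+|x_n-x_0|).
\]

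To conclude, Proposition \ref{cortrap} applied at $x_0$ itself provides the exponential rate $\eta(s_0)\le Ce^{-\mu_0 s_0}$. For a sequence $x_n\to x_0$, choose $s_0=s_0(n)$ with $e^{-s_0(n)}=|x_n-x_0|^\alpha$ for a fixed $\alpha\in(1/(\mu_0+1),1)$; then $\eta_n\le C|x_n-x_0|^{\alpha\mu_0}$ and
\[
|T(x_n)-T^*(x_n)|\le C\eta_n(e^{-s_0(n)}+|x_n-x_0|)\le C'|x_n-x_0|^{\alpha(\mu_0+1)}=o(|x_n-x_0|),
\]
which proves differentiability at $x_0$ with $\nabla T(x_0)=d(x_0)$. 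The main obstacle will be making rigorous the quantitative correspondence $\nu_n^0\leftrightarrow T^*(x_n)-T(x_n)$ in the modulation step: it combines the trapping of Proposition \ref{cortrap} with the $\kappa^*$-family modulation of Proposition \ref{propexpo}, and requires carefully tracing the slow ODE for $\nu_n(s)$ over the full asymptotic interval using the exponential decay of $q_n$.
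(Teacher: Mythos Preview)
Your approach is genuinely different from the paper's and, as written, has a gap. The paper proves differentiability by first establishing that $w_{x_0}$ converges to $\kappa(d(x_0))$ on sets \emph{strictly larger} than the unit ball (Lemma~\ref{proplarger}), via a compactness argument and the rigidity Theorem~\ref{thrig}; this is then fed into a geometric contradiction argument borrowed from the one-dimensional case. The paper's route works entirely at the fixed point $x_0$ and never needs any dynamical information about $w_{x_n}$ for nearby $x_n$. By contrast, you work with $w_{x_n}$ and apply Proposition~\ref{cortrap} there, bypassing the rigidity theorem altogether; if it worked, this would be considerably more direct.

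The gap is your appeal to ``the (already known) openness of $\RR$'' to ensure $x_n\in\RR$, which Proposition~\ref{cortrap} explicitly requires. In dimension $N\ge 2$ no such result is available a~priori: openness of the subset $\RR_0$ is the content of Theorem~\ref{threg}, whose proof (Step~2, Lemma~\ref{proplip1/2}) \emph{uses} Proposition~\ref{propdiff}, so the assumption is circular. It is conceivable that the hypothesis $\bar x\in\RR$ in Proposition~\ref{cortrap} is inessential---a purely dynamical trapping statement for equation~\eqref{eqw} should not depend on the geometric origin of the data---but that would require revisiting~\cite{MZtds}, and as the proposition is stated here it does not apply. A secondary technical point you leave unaddressed: your change of variables sends $\{|y|<1\}$ into $\{|y'|<|z_n|+\lambda_n\}$, which slightly exceeds the unit ball; the $\H$-closeness of $w_{x_0}(s_0)$ to $\kappa(d(x_0))$ gives no control there, and the weight ratio $\rho((y'-z_n)/\lambda_n)/\rho(y')$ is unbounded near $|y'|=1$, so the claimed bound on $\|(w_{x_n},\partial_s w_{x_n})(\hat s_n)-e(x_0)\kappa^*(d(x_0),\nu_n^0)\|_\H$ needs a separate boundary-layer argument. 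This is precisely the difficulty that Lemma~\ref{proplarger} in the paper's approach is designed to overcome.
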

\begin{proof}
The proof follows the same pattern as the one-dimensional case, except for two ingredients:\\
- our rigidity theorem \ref{thrig} replaces the one-dimensional Liouville Theorem of \cite{MZcmp08} (note that our theorem \ref{thrig} has an extra assumption on the data);\\
- the continuity of solutions to equation \eqref{eqw} in the $H^1\times L^2$ weak topology has to be checked, since both the fundamental wave operator and Sobolev embeddings change in higher dimensions (the one dimensional proof doesn't apply directly, so we had to recollect results from previous literature in Appendix \ref{applemliouv} to show that continuity).\\
  For this reason, we mention here the only step where the theorem \ref{thrig} and the continuity of solutions to \eqref{eqw} are used, and refer the reader to the proof of Proposition 2.1 page 60 in \cite{MZcmp08}.

\medskip

We consider $x_0\in\RR_0$. From translation invariance of equation \eqref{equ}, we may assume that 
\[
x_0=T(x_0)=0.
\]
 Since $\RR_0\subset \RR$, we see from \eqref{nonchar} that
\begin{equation}\label{wadi}
\q C_{x_0,T(x_0),\delta_0}\cap \{t\ge t_0\} \subset D
\end{equation}
for some $\delta_0\in (0,1)$ and $t_0<T(x_0)$.\\ 
Furthermore, by definition \eqref{defr0} of $\RR_0$, we see that \eqref{profileN} holds for some $d(0)\in B(0,1)$ and $e(0)=\pm 1$. Up to replacing $u(x,t)$ by $-u(x,t)$ (also solution to equation \eqref{equ}), we assume that $e(0)=1$, hence, from \eqref{profileN1}, we see that
\begin{equation}\label{profileN1}
\vc{w_0(s)}{\partial_s w_0(s)}\to \vc{\kappa(d(0))}{0}
\mbox{ in }\q H
\mbox{ as }s\to \infty,
\end{equation}
As we mentioned in the beginning of this proof, the only delicate point in the adaptation of the one-dimensional case is the following lemma where we extend the convergence in \eqref{profileN} to a large set (with no weights), as we recall in the following statement, analogous the Lemma 2.2 page 61 in \cite{MZcmp08}:
\begin{lem}[Convergence in selfsimilar variables on larger sets] \label{proplarger} For all $\delta_0'\in (\delta_0,1)$, it holds that
\[
\left\|\vc{w_{0}(s)}{\partial_s w_{0}(s)}-\vc{\kappa(d(0))}{0}\right\|_{H^1\times L^2 \left(|y|<\frac 1{\delta_0'}\right)} \to 0\mbox{ as } s\to \infty.
\]
\end{lem}
\begin{proof} Consider some $\delta_0'\in (\delta_0,1)$. The beginning of the proof is the same as the one-dimensional case (Lemma 2.2 page 61 in \cite{MZcmp08}).\\
For simplicity, we denote $w_{0}$ by $w$. Using the uniform bound on the solution at blow-up (Theorem 2' in \cite{MZimrn05}) and the covering technique in that paper (Proposition 3.3 in \cite{MZimrn05}), we get for all $s\ge -\log T(0) +1$, 
\begin{equation}\label{petit1}
\left\|\vc{w(s)}{\partial_s w(s)}\right\|_{H^1\times L^2 \left(|y|<\frac 1{\delta_0'}\right)} \le K
\end{equation}
for some constant $K$.\\
We proceed by contradiction and assume that for some $\epsilon_0>0$ and some sequence $s_n\to \infty$, we have
\begin{equation}\label{contra}
\forall n\in \m N,\; \left\|\vc{w(s_n)}{\partial_s w(s_n)}-\vc{\kappa(d(0)}{0}\right\|_{H^1\times L^2 \left(|y|<\frac 1{\delta_0'}\right)} \ge \epsilon_0>0.
\end{equation}
Let us introduce the sequence
\begin{equation}\label{defwn}
w_n(y,s)=w(y,s+s_n).
\end{equation}
Using the uniform bound stated in \eqref{petit1}, we can assume that 
\begin{equation}\label{convk0}
w_n(0)\wto z_0 \mbox{ in }H^1\left(|y|<\frac 1{\delta_0'}\right)
\mbox{ and }
\partial_s w_n(0)\wto v_0 \mbox{ in }L^2\left(|y|<\frac 1{\delta_0'}\right)
\end{equation}
as $n\to \infty$ for some $(z_0, v_0)\in H^1\times L^2 \left(|y|<\frac 1{\delta_0'}\right)$.
Since we have from \eqref{profileN1}, the definitions \eqref{defnh} and \eqref{defwn} of the norm in $\H$ and $w_n$,
\[
\left\|\vc{w_n(s)}{\partial_s w_n(s)}-\vc{\kappa(d(0))}{0}\right\|_{H^1\times L^2(|y|<1-\epsilon)}\to 0 \mbox{ as }n\to \infty
\]
for any $s \in \m R$ and $\epsilon\in (0,1)$, we deduce from \eqref{convk0} that 
\begin{equation}\label{id0}
\forall |y|<1,\;\; z_0(y)=\kappa(d(0),y)\mbox{ and }v_0(y)=0
\end{equation}
(note that we still need to determine $(z_0, v_0)$ for $1<|y|<\frac 1{\delta_0'}$).
The following claim allows us to conclude, thanks to the rigidity Theorem \ref{thrig} (and here start the novelties with respect to the one-dimensional case):
\begin{lem}[Existence of a limiting object]\label{lemliouv} 
There exists $W(y,s)$ a solution to \eqref{eqw} defined for all $|y|<\frac 1{\delta_0'}$ and $s\in \m R$ such that:\\
(i) $W(0,y)=z_0(y)$ and $\partial_s W(0,y) = v_0(y)$ for all $|y|<\frac 1{\delta_0'}$ and (up to extracting a subsequence still denoted by $w_n$), the convergence is strong in \eqref{convk0}.\\
(ii) For all $s\in \m R$ and $|y|<1$, $W(y,s) = \kappa(d,y)$.\\
(iii) For all $s\in \m R$,
\begin{equation}\label{boundkk}
\left\|\vc{W(s)}{\partial_s W(s)}\right\|_{H^1\times L^2\left(|y|<\frac 1{\delta_0'}\right)}\le K
\end{equation}
where $K$ is defined in \eqref{petit1}.
\end{lem}
\begin{proof} This proof of this statement uses the fundamental solution of the free wave operator, which depends on the dimension. Thus, the one-dimensional case proof doesn't hold here, which makes this claim a novelty of our argument (see Claim 2.3 page 62 in \cite{MZcmp08} for the one-dimensional case). Since the proof is mostly technical, we leave it to Appendix \ref{applemliouv}.
\end{proof}

\medskip

Indeed, from this claim, we see that $W(y,s)$ satisfies the hypothesis of Theorem \ref{thrig}. Therefore, either $W\equiv 0$ or there exists $\mu_0\ge 0$, $d_0\in B(0,1)$ and $\theta_0=\pm 1$ such that:
\begin{equation}\label{27b}
\forall |y|<\frac 1{\delta_0'}\mbox{ and }s\in \m R,\;\;W(y,s) = \theta_0\kappa_0\frac{(1-|d_0|^2)^{\frac 1{p-1}}}{(1+\mu_0 e^s + d_0. y)^{\frac 2{p-1}}}
\end{equation}
on the one hand, where $\kappa_0$ is defined in \eqref{defkd}. On the other hand, using \eqref{id0}, (i) of Lemma \ref{lemliouv}, and the definition \eqref{defkd} of $\kappa(d,y)$, we see that
\begin{equation}\label{27c}
\forall y\in(-1,1),\;\;W(y,0)=z_0(y)=\kappa(d(0),y)=\kappa_0\frac{(1-|d(0)|^2)^{\frac 1{p-1}}}{(1+d(0).y)^{\frac 2{p-1}}}.
\end{equation}
Comparing \eqref{27b} and \eqref{27c} when $y\in(-1,1)$ and $s=0$, we see that $\theta_0=1$, $d_0=d(0)$ and $\mu_0=0$, hence, from \eqref{27b},
\[
\forall |y|<\frac 1{\delta_0'}\mbox{ and }s\in \m R,\;\;W(y,s) = \kappa(d(0),y).
\]
In particular, from \eqref{defwn}, \eqref{convk0} and (i) of Lemma \ref{lemliouv}, this implies that (up to extracting a subsequence still denoted by $w_n$),
\[
\left\|\vc{w(s_n)}{\partial_s w(s_n)}-\vc{\kappa(d(0))}{0}\right\|_{H^1\times L^2 \left(|y|<\frac 1{\delta_0'}\right)}\to 0\mbox{ as }n\to \infty,
\]
which contradicts \eqref{contra}. Thus, Lemma \ref{proplarger} holds.
\end{proof}

\bigskip

With Lemma \ref{proplarger}, one can see that the contradiction argument in Step 2 page 62 in \cite{MZcmp08} extends straightforwardly to the higher dimensional case, and shows that
\[
\forall i =1,\dots,N,\;\;\partial_{x_i}T(0)=d_i(0),
\]
concluding therefore the proof of Proposition \ref{propdiff}.
\end{proof}

\subsection{Proof of Theorem \ref{threg}}\label{sub3}

We prove Theorem \ref{threg} here.

\begin{proof}[Proof of Theorem \ref{threg}]
Let $x_0\in \RR_0$. One can assume that $x_0=T(x_0)=0$ from translation invariance. From the definition \eqref{defr0} and Proposition \ref{propdiff},  we know (up to replacing $u(x,t)$ by $-u(x,t)$) that \eqref{profileN} holds with some $d(0)\in B(0,1)$ and $e(0)=1$, and that $T(x)$ is differentiable at $0$ with 
\begin{equation}\label{assumption}
\nabla T(0)=d(0).
\end{equation}
We aim at showing that all the points in some neighborhood of $0$ are in $\RR_0$ and that the function $x\mapsto T(x)$ is of class $C^1$ on that neighborhood.

\bigskip

We proceed in 3 steps. 

- In Step 1, we show that for some small $\delta_1>0$, $\RR\cap \{|x|<\delta_1\} \subset \RR_0$. More precisely, if $x\in\RR$ and $|x|<\delta_1$, then \eqref{profileN} holds for $w_x$, for some $|d(x)|<1$ and $e(x)=1$ with $d(x)\to \dd$ as $x\to 0$.

- In Step 2, using a geometrical construction and the previous step, we show that for some $\delta_2>0$, $\{|x|<\delta_2\} \subset \RR$. 

- In Step 3, using Steps 1 and 2, we conclude the proof of Theorem \ref{threg}. 

\bigskip

{\bf Step 1: $\RR\cap \{|x|<\delta_1\} \subset \RR_0$, for some $\delta_1>0$}

Using the dynamical study in selfsimilar variables \eqref{defw}, we claim the following:
\begin{lem}[Convergence in selfsimilar variables for $x$ close to $0$]\label{lemp1} 
For all $\epsilon>0$, there exists $\eta>0$ such that if $|x|\le \eta$ and $x\in\RR$, then, $x\in \RR_0$. More precisely, \eqref{profileN} holds for $w_{x}$ with $|d(x)-\dd|\le \epsilon$ and $e(x)=1$.
\end{lem}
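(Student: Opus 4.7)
The plan is to reduce Lemma \ref{lemp1} to Proposition \ref{cortrap} (the trapping theorem of \cite{MZtds}). Concretely, I will find, for each $x \in \mathcal{R}$ close enough to $0$, a finite time $\bar s_x$ at which $(w_x(\bar s_x), \partial_s w_x(\bar s_x))$ lies within $\bar\epsilon_0$ of $(\kappa(d(0)),0)$ in $\mathcal{H}$, and then let Proposition \ref{cortrap} propagate this to $s = \infty$, giving $x \in \mathcal{R}_0$ with $|d(x) - d(0)| \le K_0 \bar\epsilon_0 \le \epsilon$ and $e(x) = 1$ (the last by the sign of the approximate profile).

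The construction of $\bar s_x$ goes by freezing a common time slice. Given $\epsilon > 0$, fix a tolerance $\epsilon_1 \le \min(\bar\epsilon_0, \epsilon/(2K_0))$ and a slope $\delta_0' \in (\delta_0, 1)$. Using \eqref{profileN1} together with Lemma \ref{proplarger}, choose $\bar s$ large enough that
\[
\left\|\vc{w_{0}(\bar s)}{\partial_s w_{0}(\bar s)}-\vc{\kappa(d(0))}{0}\right\|_{H^1\times L^2(|y|<1/\delta_0')} \le \epsilon_1/2.
\]
Set $\bar t = -e^{-\bar s}$; then for $x \in \mathcal{R}$ the corresponding time in the similarity variables attached to $x$ is $\bar s_x = -\log(T(x) - \bar t)$, which tends to $\bar s$ as $x\to 0$. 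Both $w_0(\cdot, \bar s)$ and $w_x(\cdot, \bar s_x)$ are rescalings of the same function $u(\cdot, \bar t)$, so one rewrites the second in terms of the first by an affine change of variable $y \mapsto (x + (T(x)-\bar t)y)/(-\bar t)$ together with a multiplicative factor $((T(x)-\bar t)/(-\bar t))^{2/(p-1)}$. When $|x| \le \eta$ is small, this change of variable stays within $|y|<1/\delta_0'$ for $|y|<1$, and converges to the identity as $x \to 0$.

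From here I would carry out two continuity estimates: first, continuity of the affine change of variable as a map on $H^1 \times L^2$ of the larger ball, applied to the data $(w_0(\bar s), \partial_s w_0(\bar s))$; second, continuity in $d$ of $\kappa(d,\cdot)$ in the $\mathcal{H}$-norm. Together, these give $\eta > 0$ such that for all $x \in \mathcal{R} \cap \{|x| \le \eta\}$,
\[
\left\|\vc{w_{x}(\bar s_x)}{\partial_s w_{x}(\bar s_x)}-\vc{\kappa(d(0))}{0}\right\|_{\mathcal{H}} \le \epsilon_1.
\]
Proposition \ref{cortrap} then yields $x \in \mathcal{R}_0$ with profile $d(x) = d_\infty(\bar x)$ satisfying $|d(x) - d(0)| \le K_0\epsilon_1 \le \epsilon$ and sign $e(x) = +1$, since the approximating soliton in the trapping hypothesis is $+\kappa(d(0))$, not $-\kappa(d(0))$.

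The main obstacle is the $\mathcal{H}$-closeness after the change of variables in the third paragraph: the weight $\rho$ degenerates at $|y|=1$, while the rescaling map $y \mapsto (x + (T(x)-\bar t)y)/(-\bar t)$ sends $|y|<1$ outside the unit ball (by an amount of order $|x|/|\bar t|$). This is precisely why Lemma \ref{proplarger} is needed: without its unweighted $H^1 \times L^2$ control on a ball strictly larger than the unit ball, the transported data could not be compared to $\kappa(d(0))$ in $\mathcal{H}$. Everything else (sign persistence, choice of constants, application of Proposition \ref{cortrap}) mirrors the one-dimensional argument on page 65 of \cite{MZcmp08} and requires no further adaptation.
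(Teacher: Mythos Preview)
Your proposal is correct and follows the same strategy as the paper: establish $\mathcal{H}$-closeness of $(w_x,\partial_s w_x)$ to $(\kappa(d(0)),0)$ at some finite time, then apply Proposition \ref{cortrap} to propagate this to $s\to\infty$ and read off $d(x)$ and $e(x)=1$. The only technical difference is in how the finite-time closeness is obtained: you fix the \emph{physical} time $\bar t$, write $w_x(\cdot,\bar s_x)$ as an affine reparametrization of $w_0(\cdot,\bar s)$, and invoke Lemma \ref{proplarger} (unweighted convergence on the larger ball $|y|<1/\delta_0'$) to absorb the small overshoot of the change of variables past $|y|=1$; the paper instead fixes the \emph{similarity} time $s$ and argues $w_x(s)\to w_0(s)$ in $\mathcal{H}$ directly from continuous dependence of solutions of \eqref{eqw} on the scaling center, using only \eqref{profileN1} and the bound \eqref{petit1}. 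Both implementations are standard and yield the same conclusion.
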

\begin{nb} Here, we don't assume that all the points in some neighborhood of $0$ are uniformly non characteristic (that is, $\delta_0(x)$ defined in \eqref{nonchar} may have no upper bound strictly lower than $1$ in any neighborhood of $0$). We use instead the fact that we completely understand the dynamical structure of equation \eqref{eqw} in $\H$ close to the stationary solution $\kappa(d(0),y)$.
\end{nb}
\begin{proof}$ $\\
- Since $0$ is non characteristic, we have from \eqref{petit1}, for all $s\ge s_1$ for some $s_1\in \m R$, $\left\|(w_0(s),\partial_s w_0(s))\right\|_{H^1\times L^2 \left(|y|<\frac 1{\delta_0'}\right)} \le K$
for some constant $K$, where $\delta_0'\in (\delta_0, 1)$ is fixed.
Again from the fact that $0\in\RR_0$, as we noted earlier, we know that \eqref{profileN} holds, hence $(w_0(s), \partial_s w_0(s))$ converges to $(\kappa(\dd,.),0)$ as $s\to \infty$ in the norm of $\H$.

\medskip

\noindent - Since for fixed $s$, we have $(w_x(y,s), \partial_s w_x(y,s))\to (w_0(y,s), \partial_s w_0(y,s))$ in $\H$ from the continuity of solutions to equation \eqref{eqw} with respect to initial data, we know that for all $\epsilon>0$, there exists $s_0(\epsilon)\ge s_1$ and $\eta(\epsilon)>0$ such that for all $|x|<\eta(\epsilon)$, 
\[
\left\|\vc{w_{x}(\cdot,s_0(\epsilon))}{\partial_s w_{x}(\cdot,s_0(\epsilon))} - \vc{\kappa(\dd,.)}{0}\right\|_{\H} \le \epsilon.
\]
- From Proposition \ref{cortrap}, for a small enough fixed $\epsilon>0$, we have that for all $x\in \RR$ with $|x|<\eta(\epsilon)$, there exists $d(x)$ such that 
\[
\left\|\vc{w_x(y,s)}{\partial_s w_x(y,s)}-\vc{\kappa(d(x),y)}{0}\right\|_{\H} \to 0\mbox{ as } s\to \infty
\]
and 
\[
|d(x)-\dd|\le C \epsilon,
\]
hence $x\in \RR_0$.
This concludes the proof of Lemma \ref{lemp1}.
\end{proof}

{\bf Step 2: The Lipschitz constant of $T(x)$ around $0$ is less than  $(1+|\dd|)/2$}

Fix $\epsilon_0$ small enough such that 
\begin{equation}\label{defe0}
0<\epsilon_0\le \frac {1-|\dd|}{40}.
\end{equation}
 Using \eqref{assumption} and Lemma \ref{lemp1}, we see that there exists $\eta_0>0$ such that
\begin{equation}\label{lip0}
\forall |x|\le \eta_0,\;\;|T(x)-T(0)-\dd\cdot x|\le \epsilon_0|x|,
\end{equation}
and 
\begin{equation}\label{rr0}
\mbox{if in addition, }x\in\RR,\mbox{ then, }x\in\RR_0,
\end{equation}
 in particular \eqref{profileN} holds for $w_{x}$ with 
\begin{equation}\label{step2}
|d(x)-\dd|\le \epsilon_0
\end{equation}
and $e(x)=1$.
We now claim the following:
\begin{lem}\label{proplip1/2}{\bf (The Lipschitz constant of $T(x)$ around $0$ is less than $(1+|\dd|)/2$)} If $|x|\le \frac{\eta_0}{10}$\mbox{ and }$|y|\le \frac{\eta_0}{10}$, then
\[
 |T(x)-T(y)|\le \frac {1+|\dd|}2 |x-y|.
\] 
\end{lem}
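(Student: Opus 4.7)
My plan is to split the estimate according to the size of $|x-y|$ relative to $\eta_0$, combining the affine approximation \eqref{lip0} of $T$ near $0$ with a uniform local version of the same at every nearby non-characteristic point.

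\textbf{Large separation case.} If $|x-y|\ge \eta_0/20$, I apply \eqref{lip0} at both $x$ and $y$ and subtract to obtain
\[
|T(x)-T(y)-\dd\cdot(x-y)|\le \epsilon_0(|x|+|y|)\le \epsilon_0\eta_0/5 \le 4\epsilon_0|x-y|.
\]
Hence $|T(x)-T(y)|\le (|\dd|+4\epsilon_0)|x-y|$, which is strictly less than $(1+|\dd|)/2\cdot|x-y|$ by the choice \eqref{defe0} of $\epsilon_0$.

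\textbf{Small separation case.} If $|x-y|<\eta_0/20$, the idea is to rerun the same argument centered at a non-characteristic point $x_0\in\RR\cap B(0,\eta_0)$ near the midpoint of $x$ and $y$, instead of at $0$. This requires upgrading Proposition \ref{propdiff} to a uniform affine estimate: there exists $\eta>0$ such that for every $x_0\in\RR\cap B(0,\eta_0)$ and every $z\in B(x_0,\eta)$,
\[
|T(z)-T(x_0)-d(x_0)\cdot(z-x_0)|\le \epsilon_0|z-x_0|.
\]
Such uniformity should follow from the exponential convergence of $(w_{x_0},\partial_s w_{x_0})$ to $(\kappa(d(x_0)),0)$ in $\H$ (Proposition \ref{propexpo}), together with continuous dependence of the Cauchy problem for \eqref{equ} on initial data in $H^1\times L^2$: by Lemma \ref{lemp1}, all non-characteristic points of $B(0,\eta_0)$ have limiting profiles in the compact family $\{\kappa(d):|d-\dd|\le \epsilon_0\}$, so the rate and the neighborhood size $\eta$ can be taken independent of $x_0$. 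Applying the uniform bound at $x_0$ gives
\[
|T(x)-T(y)|\le (|d(x_0)|+\epsilon_0)(|x-x_0|+|y-x_0|)\le (|\dd|+2\epsilon_0)(|x-x_0|+|y-x_0|),
\]
and choosing $x_0$ sufficiently close to the midpoint so that $|x-x_0|+|y-x_0|$ is arbitrarily close to $|x-y|$ yields the bound $(1+|\dd|)/2\cdot|x-y|$, again by \eqref{defe0}.

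\textbf{Main obstacle.} The hard point is the existence, for every pair $x,y\in B(0,\eta_0/10)$, of such a non-characteristic $x_0$ close to their midpoint, which amounts to density of $\RR$ in $B(0,\eta_0)$. In one dimension this is immediate from the local finiteness of $\SS$ established in \cite{MZisol10}, but in higher dimensions the structure of $\SS$ is an open problem. A natural workaround is to argue by contradiction: if the Lipschitz ratio on $B(0,\eta_0/10)$ exceeded $(1+|\dd|)/2$, extract a maximizing pair $(x^*,y^*)$ in the closure; the large-separation case rules out $|x^*-y^*|\ge \eta_0/20$, while the case $x^*=y^*$ is ruled out because at $x^*$ the uniform centered affine estimate (applicable once $x^*$ is seen to lie in $\RR$, via an openness argument based on Proposition \ref{cortrap} combined with the fact that the Lipschitz ratio is attained in a full neighborhood) forces $|\nabla T(x^*)|\le|\dd|+\epsilon_0<(1+|\dd|)/2$, a contradiction with the maximizing property.
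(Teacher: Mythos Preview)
Your large-separation case is clean and correct, but the small-separation case does not close, and the contradiction ``workaround'' you sketch at the end is circular.

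\textbf{Where the gap is.} The heart of your argument is the step ``choose a non-characteristic $x_0$ near the midpoint of $x$ and $y$.'' You correctly identify that this requires density of $\RR$ in $B(0,\eta_0)$, which is open in higher dimensions. Your proposed fix is to take a maximizing pair $(x^*,y^*)$ for the Lipschitz ratio and argue that $x^*\in\RR$. But the justification you give --- ``via an openness argument based on Proposition~\ref{cortrap} combined with the fact that the Lipschitz ratio is attained in a full neighborhood'' --- is not an argument: Proposition~\ref{cortrap} applies only to points already known to be in $\RR$, and nothing you have written produces a cone of slope less than $1$ below the graph of $T$ at $x^*$. You are assuming exactly what the lemma is meant to deliver. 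A secondary issue is that your ``uniform centered affine estimate'' is asserted (``should follow from\dots'') rather than proved; Proposition~\ref{propdiff} is a pointwise statement obtained by contradiction and the rigidity theorem, and upgrading it to a uniform modulus of differentiability over $\RR\cap B(0,\eta_0)$ is not automatic.

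\textbf{What the paper does instead.} The paper bypasses the density question entirely by \emph{constructing} a non-characteristic point from the contradiction hypothesis. Assuming a bad pair $(x_0,y_0)$ exists, it first pushes $y_0$ outward to maximize $|x_0-y_0|$ subject to the bad inequality (this forces equality and pins down the geometry). It then slides a cone $\q C_{x_0,t_0,|\dd|+2\epsilon_0}$ upward until it first touches the graph of $T$ over the ball $\{|y-x_0|\le|y_0-x_0|\}$, producing a tangency point $\bar y\neq x_0$. The combination of the cone bound inside the ball and the maximality bound outside shows that the graph of $T$ lies above a cone of slope $\frac{1+|\dd|}2$ centered at $\bar y$, so $\bar y\in\RR$ by definition~\eqref{nonchar}. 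Now Proposition~\ref{propdiff} and~\eqref{step2} give $|\nabla T(\bar y)|\le|\dd|+\epsilon_0$, while the tangency to the sliding cone forces the directional derivative along $\bar y-x_0$ to have modulus at least $|\dd|+2\epsilon_0$: contradiction. The point is that the sliding-cone tangency manufactures non-characteristicness; you do not need to know in advance that $\RR$ is dense.
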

\begin{proof}
We proceed by contradiction, and assume that for some
\begin{equation}\label{x0}
|x_0|\le \frac{\eta_0}{10}
\end{equation}
 and $|y_0|\le \frac{\eta_0}{10}$, we have $|T(x_0)-T(y_0)|> \frac {1+|\dd|}2 |x_0-y_0|$. Note in particular that $x_0\neq y_0$.
Up to renaming $x_0$ and $y_0$, we may assume that $T(x_0)> T(y_0)$, so that 
\begin{equation}\label{absurde0}
T(x_0)-T(y_0)> \frac {1+|\dd|}2 |x_0-y_0|>0.
\end{equation}
Keeping this $x_0$, we may change $y_0$ so that we maximize the distance $|x_0-y_0|$ in the set of all $|y_0|\le \eta_0$ satisfying \eqref{absurde0}. This way, we see that
\begin{equation}\label{y1min0}
\mbox{if }|y|\le \eta_0\mbox{ and }|y-x_0|\ge |y_0-x_0|,\mbox{ then }T(x_0)-T(y)\le \frac {1+|\dd|}2 |x_0-y|.
\end{equation}
We claim that 
\begin{equation}\label{y0inf0}
|y_0|\le \frac{\eta_0}5.
\end{equation}
Indeed, note first from \eqref{absurde0} and \eqref{lip0} that 
\begin{align*}
|x_0-y_0|&\le \frac 2{1+|\dd|}(T(x_0)-T(y_0))\\
&\le \frac 2{1+|\dd|}\left[(T(x_0)-T(0))- (T(y_0)-T(0))\right]\\
&\le  \frac {2\dd}{1+|\dd|}\cdot (x_0-y_0)+ \frac {2\epsilon_0}{1+|\dd|}\left(|x_0|+|y_0|\right)\\
&\le  \frac {2|\dd|}{1+|\dd|}|x_0-y_0|+ \frac {2\epsilon_0}{1+|\dd|}\left(|x_0|+|y_0|\right).
\end{align*}
Since $\frac {2|\dd|}{1+|\dd|}<1$, $|x_0|\le \frac{\eta_0}{10}$ and $|y_0|\le \eta_0$,  this yields
\begin{equation*}
|x_0-y_0|\le \frac {2\epsilon_0}{1-|\dd|}\left(|y_0|+|x_0|\right)
\le \frac {4\epsilon_0\eta_0}{1-|\dd|}\le \frac{\eta_0}{10}
\end{equation*}
thanks to the smallness condition \eqref{defe0} on $\epsilon_0$, hence \eqref{y0inf0} follows.  Thus, by minimality in \eqref{y1min0}, we see that
\begin{equation}\label{construction1}
|x_0|\le \frac{\eta_0}{10},\;\;
|y_0|\le \frac{\eta_0}5,\;\;
T(x_0)-T(y_0)= \frac {1+|\dd|}2 |x_0-y_0|>0,
\end{equation}
and 
\begin{equation}\label{minbord}
T(y_0)=\min_{|y-x_0|=|y_0-x_0|}T(y).
\end{equation}
Considering a family of cones ${\cal C}_{x_0,t_0, |\dd|+2\epsilon_0}$ where $t_0\in \m R$, we may select the largest $t_0$ such that the graph of $y\mapsto T(y)$ for $|y-x_0|\le |x_0-y_0|$ lays above the cone, in the sense that
\begin{equation}\label{construction2}
\mbox{if }|y-x_0|\le |y_0-x_0|,\mbox{ then }T(y) \ge t_0 -\left(|\dd|+2\epsilon_0\right)|y-x_0|.
\end{equation}
By maximality of $t_0$, there is $\bar y$ such that
\begin{equation}\label{construction2bis}
|\bar y-x_0|\le |y_0-x_0|\mbox{ and }T(\bar y)= t_0 -(|\dd|+2\epsilon_0)|\bar y-x_0|.
\end{equation}
 We claim the following:
\begin{cl}[The point $\bar y\neq x_0$ and $\bar y\in \RR$]\label{clnonchar200}
We have, $|\bar y|\le \frac {2\eta_0}5$, $\bar y\neq x_0$ and there exists $\eta_1>0$ such that
\begin{equation}\label{mnonchar2000}
 \mbox{if }|x-\bar y|\le \eta_1,\mbox{ then }T(x) \ge T(\bar y)-\frac {1+|\dd|}2 |x-\bar y|.
\end{equation}
In particular, $\bar y\in \RR$.
\end{cl}
\begin{proof} We don't prove the last line of the claim, since it follows from \eqref{mnonchar2000} by definition \eqref{nonchar} of the notion of non-characteristic point.\\
Using \eqref{x0} and \eqref{y0inf0}, we see that $|\bar y|\le |\bar y-x_0|+|x_0|=|y_0-x_0|+|x_0|\le |y_0|+2|x_0|\le \frac{2\eta_0}5$.\\
Note then from \eqref{construction2bis}, \eqref{construction2} and a triangular inequality, that 
\begin{equation}\label{ybarcone}
\mbox{if }|y-x_0|\le |y_0-x_0|,\mbox{ then }T(y)\ge T(\bar y)-\left(|\dd|+2\epsilon_0\right)|y-\bar y|.
\end{equation}
Taking $y=y_0$ in this inequality, we see from \eqref{construction1} and the smallness of $\epsilon_0$ \eqref{defe0} that $\bar y \neq x_0$.\\
Now, if $|\bar y -x_0|<|y_0-x_0|$, then there exists $\eta_1'>0$ small enough such that if $|y-\bar y|\le \eta_1'$, then $|y-x_0|\le |y_0-x_0|$, and \eqref{mnonchar2000} follows from \eqref{ybarcone}.\\
If $|\bar y -x_0|=|y_0-x_0|$, then we see from \eqref{construction2bis} and \eqref{construction2} that $T(y_0)\ge T(\bar y)$, hence from \eqref{minbord}, we see that 
\[
T(\bar y)=T(y_0)\mbox{ and }|\bar y -x_0|=|y_0-x_0|.
\]
Using \eqref{y1min0} and a triangular inequality, we see that
\begin{equation}\label{youtside}
\mbox{if }|y|\le \eta_0\mbox{ and }|y-x_0|\ge |y_0-x_0|,\mbox{ then }T(y) \ge T(\bar y) - \frac{1+|\dd|}2|y-\bar y|.
\end{equation}
Since $|\bar y|\le \frac{2\eta_0}5$,
it follows that when $|y-\bar y|\le \frac{3\eta_0}5$, we have $|y|\le \eta_0$.
Introducing $\eta_1=\min(\eta_1', \frac{3\eta_0}5)$ and 
according to whether $|y-x_0|\le |y_0-x_0|$ or not, we may use \eqref{ybarcone} or \eqref{youtside} and the smallness of $\epsilon_0$ \eqref{defe0} to conclude the proof of Claim \ref{clnonchar200}.
\end{proof}

\bigskip

In Claim \ref{clnonchar200}, we have just proved that $\bar y\in \RR$ and $|\bar y|\le \frac{2\eta_0}5$.
Using \eqref{step2} and Proposition \ref{propdiff}, we see that $\bar y \in \RR_0$,
$T(x)$ is differentiable at $x=\bar y$ and 
\[
|\nabla T(\bar y)-\dd|= |d(\bar y)-\dd|\le \epsilon_0
\]
 on the one hand. On the other hand, from \eqref{construction2}, \eqref{construction2bis} and the fact that $\bar y\neq x_0$, we have $|\nabla T(\bar y)|\ge |\nabla T(\bar y)\cdot \frac {\bar y-x_0}{|\bar y -x_0|}|\ge |\dd|+2\epsilon_0$, 
which leads to a contradiction. This concludes the proof of Lemma \ref{proplip1/2}.
\end{proof}

{\bf Step 3: Conclusion of the proof}

Using Lemma \ref{proplip1/2}, we see that for all $|x|\le \frac{\eta_0}{20}$, $x\in\RR$. Using \eqref{rr0}, we see that $x\in \RR_0$. Using Proposition \ref{propdiff}, we see that $T$ is differentiable at $x$ and $\nabla T(x)= d(x)$ where $d(x)$ is such that \eqref{profileN} holds for $w_x$. Using Lemma \ref{lemp1}, we see from \eqref{assumption} that $\nabla T(x)=d(x)\to d(0)=\nabla T(0)$ as $x\to 0$ and $e(x)=1$. This concludes the proof of Theorem \ref{threg}.
\end{proof}

\section{Stability results related to non-characteristic points in $\RR_0$}\label{secstab}
In this section, we assume again Theorem \ref{thrig} and prove Theorem \ref{thstab} together with Corollary \ref{stabmin}, each in a separate subsection. Since we have already shown that Theorem \ref{threg} follows from Theorem \ref{thrig}, we will use Theorem \ref{threg} and all the statements of Section \ref{secreg} in our argument.
\subsection{Stability of the notion of a non-characteristic blow-up point in $\RR_0$}\label{secthstab}

We prove Theorem \ref{thstab} here, assuming that Theorem \ref{thrig} holds.

\medskip

\begin{proof}[Proof of Theorem \ref{thstab} assuming that Theorem \ref{thrig} holds]
 Consider $\hat u(x,t)$ a blow-up solution of equation \eqref{equ} with initial data $(\hat u_0, \hat u_1)\in H^1\times L^2$, and $\hat x_0\in \hat \RR_0$. From Theorem \ref{threg}, there is $\eta_1>0$ such that $B(\hat x_0, \eta_1)\subset \hat \RR_0$ and the function $x\mapsto \hat T(x)$ is of class $C^1$ in that ball. 
We will prove that for some $\hat \epsilon_0\in (0, \eta_1]$ and for all $(u_0,u_1)$ such that $\|(u_0,u_1)-(\hat u_0, \hat u_1)\|_{H^1\times L^2}\le \hat \epsilon_0$, 
the solution $u(x,t)$ of equation \eqref{equ} with initial data $(u_0,u_1)$ blows up in finite time with the ball $B(\hat x_0, \hat \epsilon_0)\subset \RR_0$.

\medskip

Since $\hat x_0\in \hat \RR_0$, from sign, translation and scaling invariance of equation \eqref{equ}, together with Lemma \ref{proplarger}, we claim that
\begin{equation}\label{init}
\hat x_0=0,\;\;\hat T(\hat x_0)=1\mbox{ and }(\hat w_0(s), \partial_s \hat w_0(s)) \to (\kappa(\hat d(0)),0)\mbox{ in }H^1\times L^2(|x|<A),
\end{equation}
as $s\to \infty$, for some $|\hat d(0)|<1$ and 
any $A$ in the interval $[1,\frac 1{|\hat d(0)|})$ (which becomes the interval $[1,+\infty)$ if $\hat d(0)=0$). The only delicate point in \eqref{init} lays in the justification of the validity interval for $A$. In order to do so, we first note from Theorem \ref{threg} that the blow-up surface $x\mapsto \hat T(x)$ is of class $C^1$ in the ball $B(0, \eta_1)$ with $\hat d(0)$ as slope. Therefore, for any $A\in [1,\frac 1{|\hat d(0)|})$, there is some time $t_0(A) \in [0,1)$ such that the cone $\q C_{0,1,A}\cap \{t\ge t_0(A)\}\subset D_u$. Shifting the time origin to $t_0(A)$ and applying Lemma \ref{proplarger}, we get \eqref{init}.

\medskip

We proceed in 3 steps:\\
- In Step 1, we show the continuity of the blow-up time with respect to initial data;\\
- In Step 2, we prove the differentiability of $x\mapsto T(x)$ at non-characteristic points $x$ near $0$ and for initial data $(u_0,u_1)$ close enough to $(\hat u_0, \hat u_1)$;\\
- In Step 3, we proceed by contradiction to conclude the proof.

\bigskip

{\bf Step 1: Continuity of the blow-up time with respect to space and to initial data}

We claim the following:
\begin{lem}[Continuity of the blow-up time with respect to initial data]\label{lemcont}
For any $\delta>0$, there exists $\epsilon_1>0$ and $\eta_1>0$ such that 
\[
|T(x)-1|\le 2\delta,
\]
whenever $\|(u_0, u_1)-(\hat u_0, \hat u_1)\|_{H^1\times L^2} \le \epsilon_1$ and $|x|\le \eta_1$, where $x\mapsto T(x)$ is the blow-up surface of $u(x,t)$, the solution of equation \eqref{equ} with initial data $(u_0,u_1)$.
\end{lem}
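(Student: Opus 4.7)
The plan is to establish the two-sided estimate $|T(x)-1|\le 2\delta$ as two independent one-sided bounds: the lower bound $T(x)\ge 1-\delta$ follows from classical Cauchy theory, while the upper bound $T(x)\le 1+\delta$ requires the modulation dynamics near $\{\pm\kappa(d)\}$ from \cite{MZtds}.

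For the lower bound, I would combine the non-characteristic property $0\in\hat\RR$ with the covering/energy estimates of \cite{MZajm03,MZimrn05} to deduce that $(\hat u,\partial_t\hat u)$ is uniformly bounded in $H^1\times L^2$ on $B(0,R)\times[0,1-\delta/2]$ for some $R>0$. Standard continuity of the wave flow on bounded time intervals, combined with finite speed of propagation (to truncate $(u_0,u_1)$ to a large ball), then ensures that $u$ exists and is close to $\hat u$ in $C([0,1-\delta],H^1\times L^2(B(0,R/2)))$ whenever $\|(u_0,u_1)-(\hat u_0,\hat u_1)\|_{H^1\times L^2}\le\epsilon_1$ is small enough. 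In particular $T(x)\ge 1-\delta$ for $|x|\le\eta_1$, giving the lower bound.

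For the upper bound, I would use \eqref{init} to fix $s_*$ large and $A\in(1,1/|\hat d(0)|)$ (any $A$ if $\hat d(0)=0$) such that $(\hat w_0(s_*),\partial_s\hat w_0(s_*))$ is within $\delta_2/4$ of $(\kappa(\hat d(0)),0)$ in $H^1\times L^2(|y|<A)$, where $\delta_2$ is the constant of Proposition \ref{propexpo}. Set $t_*=1-e^{-s_*}$; by the lower-bound argument applied with a much smaller threshold, $u(\cdot,t_*)$ is $\delta_2/4$-close to $\hat u(\cdot,t_*)$ in $H^1\times L^2(B(0,Ae^{-s_*}))$ as soon as $\epsilon_1$ and $|x|$ are small enough. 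Pushing this closeness through the self-similar change of variables \eqref{defw} centered at $(x,1)$ (not at $(x,T(x))$, which is a priori unknown) yields
\[
\Bigl\|\vc{w_{x,1}(s_*)}{\partial_s w_{x,1}(s_*)}-\vc{\kappa(\hat d(0))}{0}\Bigr\|_{\H}\le \frac{\delta_2}{2}.
\]
Applying the modulation decomposition of Proposition \ref{propexpo} then extracts parameters $d(s_*)\approx\hat d(0)$ and $\nu(s_*)$ with $|\nu(s_*)|\le C\delta_2$. Because the reference time used to build $w_{x,1}$ is $1$ rather than the genuine blow-up time $T(x)$, a direct computation from \eqref{defw} and \eqref{defk*} identifies $\nu(s_*)=(T(x)-1)\,e^{s_*}$, so that $|T(x)-1|\le C\delta_2\,e^{-s_*}$. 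Choosing $s_*$ large enough (depending only on $\delta$) gives $|T(x)-1|\le\delta$, which is the upper bound.

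The main obstacle is the upper bound: Propositions \ref{cortrap} and \ref{propexpo} are most naturally phrased in terms of the genuine similarity variables $w_x$ anchored at the blow-up time $T(x)$, but here $T(x)$ is precisely the quantity we are trying to control. The resolution is to anchor the similarity variables at the fixed reference time $1$ and interpret the resulting mismatch as the free parameter $\nu$ of the shifted soliton family $\kappa^*(d,\nu)$ of \eqref{defk*}; the modulation theory of \cite{MZtds} then yields the quantitative bound on $|\nu|$ which, through the identity $\nu=(T(x)-1)e^{s_*}$, converts into the sought control on $|T(x)-1|$.
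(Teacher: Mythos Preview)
Your lower bound argument is essentially the same as the paper's and is fine. The upper bound, however, has a genuine gap.

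The identity $\nu(s_*)=(T(x)-1)\,e^{s_*}$ is not a ``direct computation'' from \eqref{defw} and \eqref{defk*}; it is an identity between an \emph{exact} shifted soliton $\kappa^*(d,\mu e^s)$ and the blow-up time $1+\mu$ of the corresponding explicit solution. The modulation parameter $\nu(s_*)$ produced by Proposition~\ref{propexpo} is determined by orthogonality conditions imposed on $w_{x,1}(s_*)$, not by the blow-up time of $u$. The only way to recover $\nu(s_*)\approx(T(x)-1)e^{s_*}$ from the change of variables is to already know that $w_{x,T(x)}(\sigma)\approx\kappa(d(x))$ for some $d(x)$ at the corresponding time~$\sigma$, i.e.\ that $x\in\RR_0$ with the profile \eqref{profileN}. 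But at this stage of the argument you do not know that $x\in\RR_0$, nor even that $x\in\RR$; indeed, Proposition~\ref{cortrap} itself is stated only for $\bar x\in\RR$, so you cannot invoke it to produce $d(x)$ first. Your argument is therefore circular.

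A further obstruction is that Proposition~\ref{propexpo} requires the trapping hypothesis \eqref{proche0} to hold on the whole interval $[s_0,s_1]$. For $w_{x,1}$ with $T(x)\neq 1$, the expected behavior is precisely that the unstable component grows like $(T(x)-1)e^s$ and drives the solution out of the $\delta_2$-neighborhood of $\{\kappa(d)\}$; so \eqref{proche0} cannot be assumed on long intervals, and the decay of $q$ on a bounded interval gives no information about $T(x)$.

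The paper bypasses modulation entirely for the upper bound. It compares $w_{x,1+(A'-1)\delta}$ at a well-chosen time to the explicit blowing-up solution $w_-$ of \eqref{eqw} (defined in \eqref{defw-}), uses the continuity of the Lyapunov functional $E$ from \eqref{defenergy} (Lemma~\ref{lemcrit}(ii)) together with the fact that $E(w_-(s))\to-\infty$ (Lemma~\ref{lemcrit}(iii)) to force $E(w_{x,1+(A'-1)\delta})<0$, and then invokes the blow-up criterion $E<0\Rightarrow$ finite-time blow-up (Lemma~\ref{lemcrit}(i)). This yields $T(x)\le 1+(A'-1)\delta$ without any a priori knowledge of the nature of $x$.
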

\begin{nb} This is a twin statement of our analogous result for the semilinear heat equation proved with Fermanian in \cite{FKMZma00} (see Lemma 1.5 page 354 in that paper). As a matter of fact, both proofs follow the same pattern. 
\end{nb}
\begin{nb}Of course, from the finite speed of propagation, we may change the $H^1\times L^2$ norm by a the $H^1\times L^2(|x|<1+\alpha)$ for any $\alpha>0$. 
\end{nb}
\begin{proof} Let us first briefly explain the proof, before giving details. The proof follows the pattern we developed for the continuity of the blow-up time in the case of the semilinear heat equation in \cite{FKMZma00} (see Lemma 1.5 page 354 in that paper):\\
- the lower semicontinuity follows from the Cauchy theory;\\
- the upper semicontinuity follows from the knowledge of the blow-up behavior \eqref{init}, the similarity variables' transformation \eqref{defw} together with a blow-up criterion related to the following Lyapunov functional for equation \eqref{eqw}  (see Antonini and Merle \cite{AMimrn01} and Lemma \ref{lemcrit} below for a statement):
\begin{equation}\label{defenergy}
E(w(s))= \iint \left(\frac 12 \left(\partial_s w\right)^2 + \frac 12 |\nabla w|^2 -\frac 12 (y\cdot\nabla w)^2+\frac{(p+1)}{(p-1)^2}w^2 - \frac 1{p+1} |w|^{p+1}\right)\rho dy.
\end{equation}

\medskip

Let us now give the proof of Lemma \ref{lemcont} in details.\\
Consider $A\in (1, \frac 1{|\hat d(0)|})$ to be fixed close enough to $\frac 1{|\hat d(0)|}$ and introduce $A'(A)=A-\left(\frac 1{|\hat d(0)|}-A\right)$ and $A''(A)=A-2\left(\frac 1{|\hat d(0)|}-A\right)$ (if $\hat d(0)=0$, we take $A$ large enough and introduce $A'(A)=A-1$ and $A"(A)=A-2$). Note in particular that
\[
1<A"<A'<A<\frac 1{|\hat d(0)|}\mbox{ and }A', A"\to \frac 1{|\hat d(0)|}\mbox{ as }A \to \frac 1{|\hat d(0)|}.
\]
Consider also $\epsilon>0$ to be fixed small enough later. From \eqref{init}, there exists $\delta_0(A,\epsilon)>0$ such that 
\[
\forall s\ge -\log \delta_0,\;\; \|(\hat w_0(s), \partial_s \hat w_0(s))- (\kappa(\hat d(0)),0)\|_{H^1\times L^2(|x|< A)}\le \epsilon.
\]
 Consider then an arbitrary $\delta$ in the interval $(0, \delta_0)$. From the finite speed of propagation, we know that $\hat u$ is defined in the truncated cone with slope one $\{(x,t),\;|\;|x|< 1+(A-1)\delta-t\mbox{ and }0\le t \le 1-\delta\}$. From the solution to the Cauchy problem in the cone, there exists $\epsilon_1(A,\delta)>0$ such that for any initial data $(u_0,u_1)$ satisfying $\|(u_0, u_1)-(\hat u_0, \hat u_1)\|_{H^1\times L^2} \le \epsilon_1$, the solution $u(x,t)$ of equation \eqref{equ} with initial data $(u_0,u_1)$ is defined on the same truncated cone and satisfies
\begin{equation}\label{cv}
\|(w_{0,1}(-\log\delta), \partial_s w_{0,1}(-\log\delta))- (\kappa(\hat d(0)),0)\|_{H^1\times L^2(|x|< A)}\le 2\epsilon.
\end{equation}
This means in particular that
\begin{equation}\label{lb}
\forall |x|\le \delta,\;\;T(x) \ge 1-\delta,
\end{equation}
Following \eqref{lb}, we note that the upper bound on $T(x)$ will follow if we prove that for $|x|$ small enough, 
\begin{equation}\label{hada}
E(w_{x,1+(A'-1) \delta}(\sigma_{A,\delta}))\le -1\mbox{ where }\sigma_{A,\delta} = - \log(A'\delta).
\end{equation}
Indeed, if this holds, then the blow-up criterion of Lemma \ref{lemcrit} applies and we see that the function $w_{x,1+(A'-1) \delta}(z,\sigma)$ cannot be defined for all $(z,\sigma) \in B(0,1)\times [\sigma_{A,\delta}, +\infty)$. From the similarity variables' definition \eqref{defw}, we see that $u(x,t)$ cannot be defined in all the cone $\q C_{x,1+(A'-1) \delta,1}$, which means that
\begin{equation}\label{ub}
T(x)\le 1+ (A'-1) \delta.
\end{equation}
Recalling the lower bound \eqref{lb}, we get the conclusion of Lemma \ref{lemcont}. Thus, it remains to prove \eqref{hada} in order to conclude.\\
Using \eqref{defw}, we see that for all $|z|<1$,
\[
 w_{x,1+(A'-1) \delta}(z,\sigma_{A,\delta})= (A'\delta)^{\frac 2{p-1}}u(x+zA'\delta, 1-\delta)=(A')^{\frac 2{p-1}}w_{0,1}(A' z+\frac x\delta, -\log \delta).
\]
Imposing that $|x|<\delta(A-A')$, we see that $|A' z+\frac x\delta|<A$.
Therefore, we can use \eqref{cv} and see that
\begin{equation}\label{prox}
\|(w_{x,1+(A'-1) \delta}(\sigma_{A,\delta}), \partial_s w_{x,1+(A'-1) \delta}(\sigma_{A,\delta}))-(w_-(\tau_{A,\delta}(x)), \partial_s w_-(\tau_{A, \delta}(x)))\|_{\q H}\le C(A) \epsilon
\end{equation}
 where $w_-(y,s)$ is a particular blow-up solution of equation \eqref{eqw} given for all $|y|<1$ and $s<\log(1-|\hat d(0)|)$ by:
\begin{equation}\label{defw-}
w_-(y,s) = \kappa_0 \frac{(1-|\hat d(0)|^2)^{\frac 1{p-1}}}{(1-e^s+\hat d(0)\cdot y)^{\frac 2{p-1}}}.
\end{equation}
and
\[
\tau_{A, \delta}(x)=\log\left(1-\frac 1{A'} - \frac{\hat d(0)\cdot x}{A' \delta}\right)
\in [\underline \tau(A), \bar \tau(A)]
\]
where
\begin{equation}\label{tbtb}
\underline \tau(A)=\log(1-\frac 1{A"})\mbox{ and }\bar \tau(A)=\log(1-\frac 1A).
\end{equation}
Since $\bar\tau(A)<\log\left(1-|\hat d(0)|\right)$, the blow-up time of $w_-$ \eqref{defw-}), there exists a constant $K^*(A)>0$ such that
\[
\left\|(w_-(\tau_{\alpha, \delta}(x)), \partial_s w_-(\tau_{\alpha, \delta}(x)))\right\|_{\q H}\le K^*(A) \equiv \sup_{\underline\tau(A)\le s\le \bar\tau(A)}\left\|(w_-(s), \partial_s w_-(s))\right\|_{\q H}.
\]
Using \eqref{prox}, item (ii) of Lemma \ref{lemcrit}, together with the monotonicity of the Lyapunov functional, we see that 
\begin{equation}\label{barriere}
E\left(w_{x,1+(A'-1) \delta}(\sigma_\delta)\right)\le
E(w_-(\tau_{A, \delta}(x)))+C^*(A)\epsilon 
\le E(w_-(\underline\tau_{A}(x)))+C^*(A)\epsilon 
\end{equation}
for some $C^*(A)>0$.
Now, we will fix the constants $A$ and $\epsilon$ so that we get \eqref{hada}:\\
- From item (iii) in Lemma \ref{lemcrit}, \eqref{tbtb} and the expression of $A"=A"(A)$, we fix $A$ close enough to $\frac 1{|\hat d(0)|}$ (large enough if $\hat d(0)=0$) so that
\[
E(w_-(\underline\tau_{A}(x)))\le -2;
\]
- Then, we fix $\epsilon(A)=\frac 1{C^*(A)}$.\\
With these two items, we get \eqref{hada}. Since \eqref{hada} implies \eqref{ub} as explained earlier, and since we have already proved the lower bound in \eqref{lb}, we conclude the proof of Lemma \ref{lemcont}.
\end{proof}

\bigskip

{\bf Step 2: Differentiability at non-characteristic points}

In this step, we show the following:
\begin{lem}[Differentiability at non-characteristic points]\label{lemdiff}
For all $\delta>0$, there exists  $\epsilon_2>0$ and $\eta_2>0$ such that if $\|(u_0, u_1)-(\hat u_0, \hat u_1)\|_{H^1\times L^2} \le \epsilon_2$, 
$|x|\le \eta_2$ and $x\in \RR$, then, the following holds:\\
(i)  $x\in \RR_0$. More precisely, \eqref{profileN} holds for $w_{x}$ with $|d(x)-\hat d(0)|\le \delta$ and $e(x)=1$.\\
(ii) The function $x\mapsto T(x)$ is differentiable at $x$ with $\nabla T(x) = d(x)$.\\
(iii) For all $\omega \in \m S^{N-1}$, there exists $\theta^*(x,\omega)>0$ such that for all $\theta \in [0, \theta^*)$, $x+\theta \omega\in \RR_0$ and
\begin{equation}\label{ident}
\forall \theta \in [0, \theta^*],\;\;|T(x+\theta \omega)-T(x) - \theta\hat d(0)\cdot \omega|\le \delta \theta.
\end{equation}
\end{lem}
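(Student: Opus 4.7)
My plan is to follow the three-step structure of Section \ref{secreg}, carrying each step over to the perturbed solution $u$, and using Lemma \ref{lemcont} to control the blow-up surface $T$.

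For (i), I would adapt Lemma \ref{lemp1} to the perturbed setting. By \eqref{init}, one can pick $s_0$ large enough that $(\hat w_0(s_0),\partial_s \hat w_0(s_0))$ lies within $\eb/3$ of $(\kappa(\hat d(0)),0)$ in $\q H$, where $\eb$ is the trapping threshold of Proposition \ref{cortrap}. Via \eqref{defw}, the self-similar time $s_0$ corresponds to a fixed compact subset of the backward light-cone based at $(0,1)$ in the original variables. Combining continuity of the Cauchy problem on that compact set with Lemma \ref{lemcont} (which forces $T(x)$ close to $\hat T(0)=1$ whenever $|x|$ and the perturbation are small), I would show that $(w_{x,T(x)}(s_0),\partial_s w_{x,T(x)}(s_0))$ still lies within $\eb$ of $(\kappa(\hat d(0)),0)$ in $\q H$, for $|x|\le \eta_2$ and $\|(u_0,u_1)-(\hat u_0,\hat u_1)\|_{H^1\times L^2}\le \epsilon_2$ small enough. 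Then Proposition \ref{cortrap} (legitimate since $x\in\RR$ by hypothesis) yields $x\in\RR_0$ with $|d(x)-\hat d(0)|\le \delta$; the sign $e(x)=1$ persists by the same continuity argument starting from $e(0)=1$. Part (ii) is then a direct application of Proposition \ref{propdiff} to $u$, since (i) places $x$ in $\RR_0$.

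For (iii), the main difficulty is to produce a cone of non-characteristic points $x+\theta\omega$ near $x$. My strategy is to redo the contradiction argument of Lemma \ref{proplip1/2} \emph{around} the point $x$ rather than around $0$, feeding in the bounds supplied by (i)--(ii). The key auxiliary estimate is the perturbed version of \eqref{lip0} centered at $x$:
\[
|T(z)-T(x)-\hat d(0)\cdot(z-x)|\le \epsilon_0\,|z-x|\quad\text{for } |z-x|\le \eta_3(x),
\]
which follows from the differentiability of $T$ at $x$ (from (ii)) with gradient $d(x)$, combined with $|d(x)-\hat d(0)|\le \delta$ (from (i)), upon choosing $\eta_3(x)$ small enough to absorb the $o(|z-x|)$ differentiability error into the linear term. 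Equipped with this estimate, the sup-extraction and supporting-cone construction of Lemma \ref{proplip1/2} produces, under the contradiction hypothesis, a point $\bar y\in\RR$ at which geometry forces $|\nabla T(\bar y)|\ge |\hat d(0)|+2\epsilon_0$, whereas (i)--(ii) applied to $\bar y$ (which lies in $\{|y|\le \eta_2\}$ by construction) give $|\nabla T(\bar y)-\hat d(0)|\le\epsilon_0$; contradiction. This yields a strict Lipschitz constant $<1$ for $T$ in some ball around $x$, whence, by the geometric characterization \eqref{nonchar}, the entire ball lies in $\RR$ and, by (i), in $\RR_0$. The identity \eqref{ident} then follows from the differentiability of $T$ at $x$, by replacing $d(x)$ with $\hat d(0)$ via $|d(x)-\hat d(0)|\le\delta$ and shrinking $\theta^*(x,\omega)$ to absorb the $o(\theta)$ term; this forces one to apply (i) with $\delta/2$ in place of $\delta$ from the outset.

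The main obstacle is the passage from the global differentiability bound \eqref{lip0} used in Lemma \ref{proplip1/2} to a local version centered at $x$, in the absence of a uniform-in-$x$ control on the differentiability error. This is circumvented by localizing the contradiction argument around $x$ rather than $0$, so that only the pointwise differentiability of $T$ at $x$---supplied by (ii)---together with the uniform bound $|d(y)-\hat d(0)|\le\delta$ on all non-characteristic $y$ in $\{|y|\le \eta_2\}$---supplied by (i)---are needed.
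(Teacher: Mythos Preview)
Your treatment of (i) and (ii) matches the paper's: the paper also says (i) is the twin of Lemma \ref{lemp1}, obtained by combining Lemma \ref{lemcont} with continuity of the flow, and (ii) is a direct application of Proposition \ref{propdiff}. One small detail worth noting: the paper arranges (i) and (ii) to hold on the larger ball $\bar B(0,2\eta_2)$, not just $\bar B(0,\eta_2)$, so that they can later be applied at nearby points $x+\theta\omega$.

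For (iii), your approach is correct but takes a genuinely longer route than the paper's. The paper does \emph{not} redo the Lipschitz/supporting-cone argument of Lemma \ref{proplip1/2}; instead it simply invokes the openness of $\RR_0$, already established in Theorem \ref{threg}, to produce some $\bar\theta>0$ with $x+\theta\omega\in\RR_0\cap\bar B(0,2\eta_2)$ for all $\theta\in[0,\bar\theta)$. Then, applying (i) and (ii) at every point of that segment gives $|\nabla T(x+\theta\omega)-\hat d(0)|\le\delta$, and \eqref{ident} follows by integrating this gradient bound along the segment. This is shorter in two ways: it avoids rerunning the contradiction argument of Lemma \ref{proplip1/2} entirely, and it yields \eqref{ident} with the stated constant $\delta$ directly, without your device of starting from $\delta/2$ and absorbing an $o(\theta)$ error by shrinking $\theta^*$. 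Your route, by contrast, essentially re-proves openness of $\RR_0$ locally rather than quoting it, and then deduces \eqref{ident} from differentiability at the single point $x$; this works, but the integration argument exploits the information from (i)--(ii) along the whole segment and is cleaner.
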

\begin{nb} As for $u(x,t)$ and $T(x)$, the parameter $\theta^*(x,\omega)$ depends also on $(u_0, u_1)$. That dependence is omitted as we did for $u(x,t)$ and $T(x)$.
\end{nb}
\begin{proof} (i) This is a twin statement of Lemma \ref{lemp1}: it extends the previous statement to all initial data in some neighborhood of $(\hat u_0, \hat u_1)$. Thanks to the continuity of the blow-up time with respect to initial data, stated in Lemma \ref{lemcont}, and to the continuity (with respect to initial data) of the wave flow in light cones at a given time, the proof of Lemma \ref{lemp1} extends with no difficulty. As a conclusion, we get the existence of $\epsilon_2>0$ and $\eta_2>0$ such that if $\|(u_0, u_1)-(\hat u_0, \hat u_1)\|_{H^1\times L^2} \le \epsilon_2$, 
$|x|\le 2\eta_2$ (and not just $\eta_2$) 
and $x\in \RR$, then, item (i) holds.\\
(ii) This is a direct application of Proposition \ref{propdiff}. This item holds also for all $|x|\le 2\eta_2$.\\
(iii) Consider $\omega\in \m S^{N-1}$ and $|x|\le \eta_2$. Since $\RR_0$ is open by Theorem \ref{threg}, there exists $\bar\theta(x,\omega)>0$ such that for all $\theta \in [0, \bar \theta)$, $x+\theta \omega\in \RR_0\cap \bar B(0, 2\eta_2)$. Applying items (i) and (ii) (which both hold when $x$ is in the larger ball $\bar B(0, 2\eta_2)$), we see that $x\mapsto T(x)$ is differentiable at $x+\theta \omega$ with $|\nabla T(x+\theta \omega) - \hat d(0)|\le \delta$. Integrating this identity for $\theta \in [0, \bar\theta]$ yields \eqref{ident} with the additional property that
\begin{equation}\label{cdtn}
x+\bar \theta\omega \in \bar B(0,2\eta_2).
\end{equation}
 Omitting this condition, we define $\theta^*(x,\omega)\ge \bar \theta$ as the maximal $\theta$ such that item (iii) holds, without caring about the condition \eqref{cdtn}.
\end{proof}

\bigskip

{\bf Step 3: End of the proof of Theorem \ref{thstab}}

Consider $\delta=\frac{1-|\hat d(0)|}{10}$ and initial data $(u_0, u_1)$ such that
\[
\|(u_0, u_1)-(\hat u_0, \hat u_1)\|_{H^1\times L^2} \le \min(\epsilon_1,\epsilon_2(\delta)),
\]
 $u(x,t)$ the corresponding solution of equation \eqref{equ}, $x\mapsto T(x)$ its blow-up surface, $\SS$ the set of its characteristic points, $\RR$ the set of its non-characteristic points and $\RR_0$ the subset of $\RR$ such that the solution converges to some $\pm \kappa(d)$ is similarity variables. We would like to show that the closed ball $\bar B(0,\eta)\subset \RR_0$ where $\eta=\min(\eta_1, \eta_2(\delta))$.\\
We proceed by contradiction, and assume that there exists $|x_0|\le \eta$ such that $x_0\in \SS$. Let us consider such an $x_0$ with a minimal norm $|x_0|$. In this case, we have $B(0,|x_0|)\subset \RR$ and for all $r\ge |x_0|$, the set $\{|x|\le r,\;\;x\in \SS\}\neq \emptyset$. Therefore, we may introduce
\begin{equation}\label{defmin}
\mbox{for any }r\in[|x_0|, \eta],\;\;
\tilde T(r)=\min_{|x|\le r,\;\;x\in \SS}T(x).
\end{equation}
Let us first make the following observation on the localization of non-characteristic points with respect to minimizing points of $\tilde T$:
\begin{lem} \label{lemint} Consider $r\in [|x_0|, \eta]$ and $x_1\in \SS$ such that $|x_1|\le r$ and $\tilde T(r) = T(x_1)$. Consider also $x_2\in \RR\cap \bar B(0,\eta)$ and $\frac{9|\hat d(0)|+1}{10}<\delta_2<1$. Then, \\
- either $T(x_2)\ge T(x_1) -\delta_2|x_2-x_1|$;\\
- or there exists $x^*\in \SS$ in the open segment $(x_2, x_1)$ such that 
$T(x^*)\le T(x_1) - \delta_2 |x^*-x_1|$.\\
Moreover, if $|x_2|\le r$, then only the first case occurs.
\end{lem}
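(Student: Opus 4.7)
My plan is to parametrize the segment from $x_1$ to $x_2$ by $\gamma(\theta)=x_1+\theta\omega$ with $\omega=(x_2-x_1)/|x_2-x_1|$ and $\theta\in[0,L]$, where $L=|x_2-x_1|$. By convexity, $\gamma([0,L])\subset\bar B(0,\eta)$. I would then introduce the auxiliary function
\[
h(\theta)=T(\gamma(\theta))-T(x_1)+\delta_2\theta,
\]
which satisfies $h(0)=0$. With this notation, the first alternative of the lemma is precisely $h(L)\ge 0$, and the second alternative is the existence of some $\theta^*\in(0,L)$ with $\gamma(\theta^*)\in\SS$ and $h(\theta^*)\le 0$.

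The key point will be the monotonicity of $h$ on any sub-arc of $(0,L)$ on which $\gamma$ takes values in $\RR$. Since $\bar B(0,\eta)\subset B(0,2\eta_2)$, Lemma \ref{lemdiff}(i) combined with Theorem \ref{threg} tells us that $\RR\cap\bar B(0,\eta)=\RR_0\cap\bar B(0,\eta)$, that $T$ is of class $C^1$ on this set, and that $\nabla T(x)=d(x)$ satisfies $|\nabla T(x)-\hat d(0)|\le\delta=\frac{1-|\hat d(0)|}{10}$. So on any open interval $(a,b)\subset(0,L)$ with $\gamma((a,b))\subset\RR$, $h$ is $C^1$ and
\[
h'(\theta)=\nabla T(\gamma(\theta))\cdot\omega+\delta_2\ge\delta_2-(|\hat d(0)|+\delta)=\delta_2-\frac{9|\hat d(0)|+1}{10}>0.
\]

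To conclude the main dichotomy, I would assume the second alternative fails and prove the first one holds. The set $\SS\cap\bar B(0,\eta)$ is closed in $\bar B(0,\eta)$, its complement being the open set $\RR_0\cap\bar B(0,\eta)$. Hence $A=\{\theta\in[0,L]\,:\,\gamma(\theta)\in\SS\}$ is a closed subset of $[0,L]$ containing $0$ but not $L$. Setting $a=\sup A\in[0,L)$, we have $\gamma(a)\in\SS$ and $\gamma((a,L])\subset\RR$; by the strict monotonicity of $h$ on $(a,L)$ and the continuity of $T$, I get $h(L)\ge h(a)$. If $a=0$, then $h(a)=0$ and the first alternative holds. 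If $a>0$, then $\gamma(a)\in\SS$ with $a\in(0,L)$, so the failure of the second alternative forces $h(a)>0$, whence $h(L)>0$.

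For the "moreover" clause, suppose $|x_2|\le r$ and that the second alternative were to hold at some $x^*=\gamma(\theta^*)$. Then $x^*$ lies on the open segment, so $|x^*|\le\max(|x_1|,|x_2|)\le r$, and the minimality in \eqref{defmin} forces $T(x^*)\ge\tilde T(r)=T(x_1)$; but $T(x^*)\le T(x_1)-\delta_2|x^*-x_1|<T(x_1)$ since $x^*\ne x_1$, a contradiction. The only delicate point I foresee is the clean identification $\RR\cap\bar B(0,\eta)=\RR_0\cap\bar B(0,\eta)$ and the $C^1$ character of $T$ on it, but both are already supplied by Lemma \ref{lemdiff}(i) and Theorem \ref{threg}; once they are in hand the proof amounts to sliding a cone of slope $\delta_2>|\hat d(0)|+\delta$ along the segment and detecting, if any, the first characteristic point that forces $h$ below zero.
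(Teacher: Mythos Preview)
Your proof is correct and follows essentially the same approach as the paper. The paper parametrizes the segment from $x_2$ towards $x_1$, invokes Lemma~\ref{lemdiff}(iii) to get the maximal $\theta^*$ with $x_2+\theta\omega\in\RR_0$, and argues directly that if the first alternative fails then $x^*=x_2+\theta^*\omega$ realizes the second; you instead run the contrapositive from $x_1$ towards $x_2$, replace the packaged estimate of Lemma~\ref{lemdiff}(iii) by the equivalent pointwise bound $|\nabla T|\le |\hat d(0)|+\delta<\delta_2$ coming from Theorem~\ref{threg} and Lemma~\ref{lemdiff}(i)--(ii), and read off the same conclusion via monotonicity of $h$. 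The ``moreover'' clause is handled identically in both arguments.
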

\begin{proof} Consider $r\in [|x_0|, \eta]$ and $x_1\in \SS$ such that $\tilde T(r) = T(x_1)$. Consider also $x_2\in \RR\cap \bar B(0,\eta)$ and $\frac{9|\hat d(0)|+1}{10}<\delta_2<1$. Let us assume that 
\begin{equation}\label{above}
T(x_2)< T(x_1) -\delta_2|x_2-x_1|
\end{equation}
and prove the existence of some $x^*\in \SS$ in the open segment $(x_2, x_1)$ such that $T(x^*)\le T(x_1)-\delta_2|x_1-x^*|$.\\
Since $|x_2|\le\eta \le \eta_2$,
 we see from item (i) in Lemma \ref{lemdiff} that $x_2 \in \RR_0$.
Since $x_1\neq x_2$ from \eqref{above}, introducing $\omega =\frac{x_1-x_2}{|x_1-x_2|}\in \m S^{N-1}$, we see from item (iii) in Lemma \ref{lemdiff} that for some maximal $\theta^*(x_2,\omega)>0$, we have for all $\theta \in [0, \theta^*)$, $x_2+\theta \omega\in \RR_0$, and
\begin{equation}\label{proche}
\forall \theta \in [0, \theta^*],\;\;
|T(x_2+\theta \omega)-T(x_2) - \theta\hat d(0)\cdot \omega|\le \delta \theta\mbox{ with }\delta = \frac{1-|d(0)|}{10}.
\end{equation}
Since $x_1\in \SS$, we clearly have $0<\theta^*(x_2, \omega)\le |x_1-x_2|$.\\
Assume by contradiction that $\theta^*=|x_1-x_2|$. Then, $x_1=x_2+\theta^*\omega$. Applying \eqref{proche} with $\theta=\theta^*$, we see that $|T(x_1)-T(x_2)|\le \frac{1+9|\hat d(0)|}{10}|x_1-x_2|$. Since $x_1\neq x_2$ and $\frac{9|\hat d(0)|+1}{10}<\delta_2$,  a contradiction follows from \eqref{above}.\\
Therefore, $0<\theta^*<|x_1-x_2|$,  hence $x^*\not\in \{x_1,x_2\}$, where $x^*= x_2+\theta^*\omega$. Since $|x^*|\le \max (|x_1|, |x_2|)\le \eta \le \eta_2$, and $\theta^*$ is maximal,
it follows from item (iii) in Lemma \ref{lemdiff} that $x^*\in \SS$.
Since $|x_1-x_2|=|x_1-x^*|+|x^*-x_2|$, using \eqref{proche} and \eqref{above}, we see that
\begin{align}
T(x^*)&\le T(x_1) +\frac{1+9|\hat d(0)|}{10}|x^*-x_2| - \delta_2(|x_1-x^*|+|x^*-x_2|)\nonumber\\
&\le T(x_1)-(\delta_2 -\frac{1+9|\hat d(0)|}{10}) |x^*-x_2|-\delta_2|x_1-x^*|\nonumber\\
&\le T(x_1)-\delta_2|x_1-x^*|<T(x_1).\label{second}
\end{align}
By definition of the minimum in \eqref{defmin}, we must have $|x^*|>r$. Since $x^*$ is on the segment $[x_2, x_1]$ and $|x_1|=r$, this implies that $|x_2|>r$. In particular, this means that if we have assumed that $|x_2|\le r$ at the beginning of the proof, then estimate \eqref{second} cannot occur, thus, only the first case occurs in Lemma \ref{lemint}. 
This concludes the proof of Lemma \ref{lemint}.
\end{proof}
From the definition of $\tilde T(r)$ given in \eqref{defmin}, two cases then arise:

\bigskip

\noindent{\bf Case 1}: There exists $r\in [|x_0|, \eta]$ such that the minimum is achieved in the open ball $B(0,r)$, say, at some $x_1\in \SS$ such that
\begin{equation}\label{minx1}
|x_1|<r\mbox{ and }T(x_1) = \min_{|x|\le r,\;\;x\in \SS}T(x).
\end{equation}
 Clearly, there exists $x_2\in B(x_1,r-|x_1|)$ such that
\begin{equation}\label{above1}
T(x_2)< T(x_1) -\frac{3+|\hat d(0)|}4|x_2-x_1|,
\end{equation}
otherwise $x_1\in \RR$. Since $|x_2|<|x_1|+r-|x_1|=r\le \eta$ and $T(x_2)<T(x_1)$, we see from \eqref{minx1} that necessarily $x_2\in \RR$ and $x_2\neq x_1$. Noting that $|x_2|\le r$, we see from Lemma \ref{lemint} applied with $\delta_2 = \frac{3+|\hat d(0)|}4$ that \eqref{above1} cannot occur, and a contradiction follows. 

\bigskip

\noindent {\bf Case 2}: For all $r\in [|x_0|, \eta]$, the minimum in \eqref{defmin} is not achieved in the open ball. This means that  
\[
\forall r\in [|x_0|, \eta],\;\; \tilde T(r)=\min_{|x|\le r,\;\;x\in \SS}\tilde T(x)=\min_{|x|= r, \;\;x\in \SS}\tilde T(x).
\]
Note that by construction, $r\mapsto\tilde T(r)$ is a nonincreasing function. Moreover, since $T$ is $1$-Lipschitz, we easily see that the same holds for $\tilde T$. 
We claim that 
\[
\forall r\in [|x_0|,\eta],\;\;\tilde T(r) = \tilde T(\eta)-r+\eta.
\] 
Indeed, the proof follows from our argument in Lemma 4.2 page 614 in \cite{MZajm11}. Let us recall it in the following. Assume by contradiction that for some $m'\neq 1$ and $r'\in [|x_0|, \eta)$, we have 
\begin{equation}\label{contradiction}
\tilde T(r') = \tilde T(\eta)-m'(r'-\eta).
\end{equation}
 Since $\tilde T$ is $1$-Lipschitz and nonincreasing, it follows that $0\le m'<1$.
Considering a family of lines of slope $-\frac{1+m'}2$ growing from below, we find the highest line that stays under the graph of $\tilde T$ on the interval $[r',\eta]$. In other words, there is $r_1\in [r',\eta]$ such that 
\begin{equation}\label{plusbas}
\forall r\in [r',\eta],\;\;\tilde T(r) \ge -\frac{1+m'}2(r-r_1)+\tilde T(r_1).
\end{equation}
If $r_1=\eta$, then applying this inequality with $r=r'$ and recalling that $r'<\eta$, we see that a contradiction follows from \eqref{contradiction}.\\
If $r_1\in [r', \eta)$, we consider $x_1\in \SS$ such that $|x_1|=r_1$ and $\tilde T(r_1) = T(x_1)$. Note that $|x_1|<\eta$. The following claim allows us to conclude:
\begin{cl} \label{cltech}Consider an arbitrary $x_2\in \bar B(x_1, \eta-r_1)$. Then, we have the following cases:\\
(i) if $x_2\in \SS$ and $|x_2|\le r_1$, then $T(x_2) \ge T(x_1)$;\\
(ii) if $x_2\in \SS$ and $|x_2|>r_1$, then $T(x_2) \ge T(x_1) -\frac{1+m'}2|x_2-x_1|$;\\
(iii) if  $x_2 \in \RR$ and $|x_2|\le r_1$, then $T(x_2) \ge T(x_1) - \frac{8|\hat d(0)|+1}9|x_2-x_1|$;\\
(iv)  if  $x_2 \in \RR$ and $|x_2|> r_1$, then either $T(x_2) \ge T(x_1) - \delta_1|x_2-x_1|$ or there exists $x^*(x_2)\in \SS$ in the open segment $(x_2,x_1)$ such that $T(x^*) \le T(x_1) - \delta_1|x^*-x_1|$,  where $\delta_1 = \max\left(\frac{3+m'}4, \frac{8|\hat d(0)|+1}9\right)$.
\end{cl}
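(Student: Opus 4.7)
The plan is to dispatch the four cases separately by combining three ingredients that are already in place at this point of the argument: the definition \eqref{defmin} of $\tilde T(\cdot)$ as a minimum of $T$ over characteristic points in a ball, the inequality \eqref{plusbas} expressing that $\tilde T$ stays above the line of slope $-(1+m')/2$ through $(r_1,T(x_1))$ on the interval $[r',\eta]$, and Lemma \ref{lemint}, which already restricts how non-characteristic points can sit relative to minimizers of $\tilde T$. In particular, no new analytic input is needed; the work is to match the constants.

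Cases (i) and (ii) concern the subcase $x_2\in\SS$ and follow from the first two ingredients alone. For (i), the condition $|x_2|\le r_1$ together with $x_2\in\SS$ yields $T(x_2)\ge \tilde T(r_1)=T(x_1)$ directly from \eqref{defmin}. For (ii), the assumption $x_2\in \bar B(x_1,\eta-r_1)$ combined with $|x_1|=r_1$ gives $|x_2|\le \eta$, so $|x_2|\in(r_1,\eta]\subset[r',\eta]$, and one chains $T(x_2)\ge \tilde T(|x_2|)\ge T(x_1)-\tfrac{1+m'}{2}(|x_2|-r_1)$, using \eqref{defmin} and \eqref{plusbas}. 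The reverse triangle inequality $|x_2|-r_1\le |x_2-x_1|$ then produces the desired bound.

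Cases (iii) and (iv) concern $x_2\in\RR$ and are immediate consequences of Lemma \ref{lemint} applied with $r=r_1$. For (iii), one picks $\delta_2=\tfrac{8|\hat d(0)|+1}{9}$, which satisfies $\tfrac{9|\hat d(0)|+1}{10}<\delta_2<1$ since $|\hat d(0)|<1$ (a one-line cross-multiplication check), so the hypotheses of Lemma \ref{lemint} hold; then $|x_2|\le r_1$ forces only the first alternative of the lemma, giving the stated estimate. For (iv), one picks $\delta_2=\delta_1$, which still satisfies $\tfrac{9|\hat d(0)|+1}{10}<\delta_1<1$: the lower bound holds because $\delta_1\ge \tfrac{8|\hat d(0)|+1}{9}$, and the upper bound because both terms in the maximum are strictly less than $1$ (from $m'<1$ and $|\hat d(0)|<1$). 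The full dichotomy of Lemma \ref{lemint} is then exactly (iv).

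The argument is essentially bookkeeping rather than substantive, and the only real checks are (a) that $r_1\in[r',\eta]$ so that \eqref{plusbas} applies at radii in $[r_1,\eta]$, which is built into the construction of $r_1$, and (b) the elementary numerical inequalities on $\delta_1$ and $\delta_2$ needed to align the constants of the statement with those of Lemma \ref{lemint}. No genuine obstacle is expected, since the substantive work (the selection of $x_1$ as a minimizer of $\tilde T(r_1)$ and the dichotomy of Lemma \ref{lemint}) has already been completed.
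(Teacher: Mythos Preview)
Your proposal is correct and follows essentially the same route as the paper: cases (i) and (ii) come from \eqref{defmin} and \eqref{plusbas} plus the triangle inequality, and cases (iii) and (iv) are direct applications of Lemma \ref{lemint} with $r=r_1$ and the same choices $\delta_2=\tfrac{8|\hat d(0)|+1}{9}$ and $\delta_2=\delta_1$. Your added verifications that $\delta_2,\delta_1\in(\tfrac{9|\hat d(0)|+1}{10},1)$ and that $|x_2|\le\eta$ are exactly the bookkeeping the paper leaves implicit.
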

Indeed, if for all $x_2\in \bar B(x_1, \eta-r_1)$ with $x_2\in \RR$ and $|x_2|>r_1$, the first case of item (iv) occurs, then, we see that 
\begin{equation*}
\mbox{if }|x_2-x_1|\le \eta-r_1,\mbox{ then }T(x_2) \ge T(x_1) - \delta_1|x_2-x_1|.
\end{equation*}
Since $m'<1$ and $|\hat d(0)|<1$, we also have $\delta_1<1$, and we see that $x_1\in \RR$, which is a contradiction.\\ 
Now, if for some $x_2\in \bar B(x_1, \eta-r_1)$ with $x_2\in \RR$ and $|x_2|>r_1$, the second case of item (iv) occurs, then, 
 we see that for some $x^*(x_2)$ in the open segment $(x_2,x_1)$, we have $T(x^*) \le T(x_1) - \delta_1|x^*-x_1|$, $|x^*-x_1|\le |x_1-x_2|\le \eta-r_1$, $x^*\in \SS$ and $|x^*|>\min(|x_2|,|x_1|)=r_1$. Applying item (ii) to $x^*$, we get a contradiction. Let us then prove Claim \ref{cltech} in order to conclude the proof of Theorem \ref{thstab}.
\begin{proof}[Proof of Claim \ref{cltech}]
Take $x_2\in \bar B(x_1, \eta-r_1)$. Note that $|x_2|\le |x_2-x_1|+|x_1|\le \eta-r_1+r_1=\eta$.\\
(i) If $x_2\in \SS$ and $|x_2|\le r_1$, then we have from the definition \eqref{defmin} of $\tilde T$ that $T(x_2) \ge \tilde T(r_1) = T(x_1)$. \\
(ii) If $x_2\in \SS$ and $|x_2|>r_1$, then we use again \eqref{defmin} and \eqref{plusbas} to write
\[
T(x_2) \ge \tilde T(|x_2|) \ge \tilde T(r_1)-\frac{1+m'}2(|x_2|-r_1)
\ge T(x_1) -\frac{1+m'}2|x_2-x_1|.
\] 
(iii) If $x_2 \in \RR$ and $|x_2|\le r_1$, we take $\delta_2 = \frac{8|\hat d(0)|+1}9>\frac{9|\hat d(0)|+1}{10}$. From Lemma \ref{lemint}, we see that only the first case in that lemma holds, which yields item (iii).\\
(iv) If $x_2\in \RR$ and $|x_2|>r_1$, applying Lemma \ref{lemint} with $\delta_2 = \delta_1$, we get the conclusion. This concludes the proof of Claim \ref{cltech}. 
\end{proof}
Since Claim \ref{cltech} concludes the proof of Theorem \ref{thstab}, this concludes the proof of Theorem \ref{thstab} too, assuming that Theorem \ref{thrig} holds.
\end{proof}

\subsection{Stability of the existence of a minimal blow-up time in $\RR_0$}
We prove Corollary \ref{stabmin} here, assuming that Theorem \ref{thrig} holds. Since Theorems \ref{threg} and \ref{thstab} together with Sections \ref{secreg} and Section \ref{secthstab} hold whenever Theorem \ref{thrig} holds, we may use them in our proof.

\medskip

\begin{proof}[Proof of Corollary \ref{stabmin} assuming that Theorem \ref{thrig} holds] 
Consider $\hat u(x,t)$ a blow-up solution of equation \eqref{equ} with initial data $(\hat u_0, \hat u_1)\in H^1\times L^2$, and $\hat x_0\in \hat \RR_0$. 
Applying Theorem \ref{thstab}, we see that for some $\hat \epsilon_0>0$ and for any $(u_0, u_1)$ such that $\|(u_0,u_1)-(\hat u_0, \hat u_1)\|_{H^1\times L^2}\le \hat \epsilon_0$,
the solution $u(x,t)$ with initial data $(u_0, u_1)$ blows up in finite time with $B(\hat x_0, \hat \epsilon_0)\subset \RR_0$. 
Assuming in addition that the function $x\mapsto \hat T(x)$ achieves a strict local minimum in $\hat x_0$ (the case of a strict local maximum follows exactly in the same way), we get the existence of some $0<\epsilon_1<\hat \epsilon_0$ and $\delta_1>0$ such that 
\[
\min_{|x-\hat x_0|=\epsilon_1}\hat T(x) \ge \hat T(\hat x_0)+3\delta_1.
\]
Consider $x\in \m R^N$ such that $|x-\hat x_0|=\epsilon_1$. Since $x$ and $\hat x_0$ are both in $\hat \RR_0$, we may apply the continuity of the blow-up time result stated in Lemma \ref{lemcont} and 
get the existence of some $\eta_x>0$ 
and some $\hat \epsilon_x\le \hat \epsilon_0$ such that if $\|(u_0,u_1)-(\hat u_0, \hat u_1)\|_{H^1\times L^2}\le \hat \epsilon_x$, then
\[
\min_{|y-x|\le \eta_x}T(y) \ge \hat T(\hat x_0) + 2\delta_1\mbox{ and }T(\hat x_0)\le \hat T(\hat x_0) + \delta_1.
\]
Since the sphere $S(\hat x_0, \epsilon_1)$ is compact, there is a finite number of points $x_1,\dots, x_k$ of points in that sphere, for some $\k\in \m N^*$,  such that  
$S(\hat x_0, \epsilon_1)\subset \cup_{i=1,\dots,k}\hat B(x_i,\eta_{x_i})$. 
Introducing $\epsilon_0' = \min\{\hat \epsilon_0, \epsilon_{x_i}\;|\;i=1,\dots,k\}$, we see that for all $(u_0, u_1)$ such that $\|(u_0,u_1)-(\hat u_0, \hat u_1)\|_{H^1\times L^2}\le \hat \epsilon_0'$, 
\begin{equation}\label{separation}
\min_{|x-\hat x_0|=\epsilon_1}T(y) \ge \hat T(\hat x_0) + 2\delta_1\mbox{ and }T(\hat x_0)\le \hat T(\hat x_0) + \delta_1.
\end{equation}
Since $B(\hat x_0, \hat \epsilon_0)\subset \RR_0$, using Theorem \ref{threg}, we see that  $x\mapsto T(x)$ is $C^1$ in $B(\hat x_0, \hat \epsilon_0)$. Since $\bar B(\hat x_0, \epsilon_1)\subset B(\hat x_0, \hat \epsilon_0)$, we see from \eqref{separation} that the function $x\mapsto T(x)$ must achieve a local minimum $x_0$ in the open ball $B(\hat x_0, \epsilon_1)$ with $\nabla T(x_0)=0$. Since $x_0\in \RR_0$, applying again Theorem \ref{threg}, we see that $w_{x_0}(s) \to \pm \kappa_0$ as $s\to \infty$. This concludes the proof of Corollary \ref{stabmin}.  
\end{proof}

\section{A rigidity theorem for equation \eqref{eqw}}\label{secrig}
This section is devoted to the proof of Theorems \ref{thrig} and \ref{thrig}'. Note that the proof uses the dynamical system formulation and the modulation technique given in \cite{MZtds} and recalled in Section \ref{secdyn} above. Let us first derive Theorem \ref{thrig} from Theorem \ref{thrig}', then prove Theorem \ref{thrig}'.

\medskip
 
\begin{proof}[Proof of Theorem \ref{thrig} assuming Theorem \ref{thrig}']
Assume that Theorem \ref{thrig}' holds and consider $w(y,s)$ a solution of equation \eqref{eqw} defined for all $|y|<A^*$ and $s\in \m R$ for some $A^*>1$ and satisfying \eqref{hypw}.
Introducing 
\[
u(x,t) = (-t)^{-\frac 2{p-1}}w(y,s)\mbox{ where }y=\frac x{-t}\mbox{ and }s=-\log(-t),
\]
we see that by definition, $w$ is the similarity version of $u$ at $(0,0)$ (in other words $w_{0,0}=w$), and that $u(x,t)$ satisfies the hypotheses of Theorem \ref{thrig}' with $x^*=0$, $T^*=0$ and $\delta^*=\frac 1{A^*}$. Thus, the conclusion of Theorem \ref{thrig} follows from the conclusion of Theorem \ref{thrig}'.
\end{proof}

We now give the proof of Theorem \ref{thrig}'.

\medskip

\begin{proof}[Proof of Theorem \ref{thrig}']
Consider $u(x,t)$ a solution of equation \eqref{equ} defined for all $(x,t) \in \q C_{x^*, T^*, \delta^*}$ \eqref{defcone}. From the symmetries of equation \eqref{equ}, we may assume that $x^*=0$, $T^*=0$ and $e^*=1$. Introducing $W=w_{0,0}$ and $A^*=\frac 1{\delta^*}>1$, we assume that for all $s\in \m R$:
\begin{equation}\label{hypo}
\|(W(s), \partial_s W(s))\|_{H^1\times L^2 (|y|<A^*)}\le M^*\mbox{ and }
\forall |y|<1,\;\;W(y,s) = \kappa(d^*,y),
\end{equation}
for some $|d^*|<1$ and $M^*>0$. We would like to prove that $u(x,t)$ is explicitly given by 
\begin{equation}\label{result}
u(x,t) = \kappa_0 \frac{(1-|d^*|^2)^{\frac 1{p-1}}}{(-t +d^*\cdot x)^{\frac 2{p-1}}}.
\end{equation}
Let us rapidly present the argument of the proof. The details will be given later. We proceed in two parts:\\
- In Part 1, we consider the similarity variables' transformation \eqref{defw} $w_{a,d^*\cdot a}$ (or $w_a$ for simplicity) around $(a,d^*\cdot a)$ where $a\in \m R^N$. Note that this point is on the singular line of the right-hand side of the targeted result \eqref{result}.
Since
\begin{equation}\label{waw}
w_a(y,s)=\lambda^{-\frac 2{p-1}} W\left(\frac{y+ae^s}\lambda,s - \log \lambda\right) \mbox{ with }\lambda =1-d^*\cdot a e^s
\end{equation}
from \eqref{defw}, we translate the information \eqref{hypo} into estimates for $w_a$. In particular, we show that
\begin{equation}\label{poss0}
\|(w_a(s), \partial_s w_a(s))-(\kappa(d^*),0)\|_{\H} =O(e^{\frac{\alpha s}2})\mbox{ as }s\to -\infty.
\end{equation}
- In Part 2, using \eqref{poss0}, we see that Proposition \ref{propexpo} applies, and we get the exponential decay property for $q(y,s)$, where 
\begin{equation}\label{defqq}
q(y,s) = \vc{w_a(y,s)}{\partial_s w_a(y,s)}-\kappa^*(d(s),\nu(s),y)
\end{equation}
and the parameters $|d(s)|<1$ and $\nu(s) >-1+|d(s)|$ are of class $C^1$. 
Since $s\to -\infty$ and $q$ is bounded, we must have
\begin{equation}\label{q0}
q(y,s)\equiv 0\mbox{ for all }|y|<1,
\end{equation}
on the one hand. On the other hand, from the fact that $W(y,s)\equiv \kappa(d^*,y)$ for all $|y|<1$ (see \eqref{hypo}) and \eqref{waw}, we see that
\begin{equation}\label{wak}
w_a(y,s) \equiv \kappa(d^*,y)\mbox{ on some non empty open set }K_{a,s}\subset B(0,1)
\end{equation}
 defined below in \eqref{defkas}. Therefore, it follows from \eqref{q0} and \eqref{defqq} that
\[
d(s)\equiv d^*\mbox{ and }\nu(s)\equiv 0,
\]
hence $w_a(y,s) \equiv \kappa(d^*,s)$ for all $|y|<1$. Since $a$ was arbitrary, we use the similarity variables' transformation \eqref{defw} to recover \eqref{result} for all $(x,t)\in \q C_{0, 0, \delta^*}$.

\medskip

{\bf Part 1: Translating the information for $w_a$}

We first claim the following:
\begin{prop}\label{prop1}For all $a\in \m R^N$, there exists  $s_1(a)\in \m R$ such that the following holds for any $s\le s_1(a)$ :\\
(i) $\|(w_a(s), \partial_s w_a(s))\|_{\H}\le C M^*$;\\
  (ii) Estimate \eqref{wak} holds and 
\[
\|(w_a(s), \partial_s w_a(s))-(\kappa(d^*),0)\|_{\H} =O(e^{\frac{\alpha s}2})\mbox{ as }s\to -\infty.
\] 
\end{prop}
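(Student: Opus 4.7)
The plan is to derive both assertions of Proposition \ref{prop1} from the scaling relation \eqref{waw}, by transferring information from $W$ on $|z|<A^*$ to $w_a$ on the unit ball. Throughout I write $\lambda=1-d^*\cdot a\, e^s$, $z=(y+ae^s)/\lambda$ and $\tau=s-\log \lambda$. The first step is to choose $s_1(a)\in\m R$ so negative that for all $s\le s_1(a)$ the scale $\lambda$ stays close to $1$, the shift $ae^s$ is small, and the image of $B(0,1)$ under $y\mapsto z$, which is the ball $B(ae^s/\lambda,\,1/\lambda)$, is compactly contained in $B(0,A^*)$.

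For (i), differentiating \eqref{waw} expresses $\nabla_y w_a$ and $\partial_s w_a$ as linear combinations of $W$, $\nabla_z W$ and $\partial_\tau W$ evaluated at $(z,\tau)$, with coefficients bounded uniformly for $s\le s_1(a)$. Using $\rho\le 1$ and $|y\cdot\nabla w_a|\le |\nabla w_a|$ on $B(0,1)$, I bound $\|(w_a,\partial_s w_a)\|_\H^2$ by the unweighted integral $\int_{|y|<1}(w_a^2+|\nabla w_a|^2+(\partial_s w_a)^2)\,dy$, and then change variables to $z$ to dominate this by a constant times $\|(W(\tau),\partial_\tau W(\tau))\|_{H^1\times L^2(|z|<A^*)}^2\le (M^*)^2$, giving (i).

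The key observation for (ii) is that on $K_{a,s}:=\{y\in B(0,1):|z|<1\}$, the hypothesis \eqref{hypo} gives $W(z,\tau)=\kappa(d^*,z)$; plugging this into \eqref{waw} and using the identity $1+d^*\cdot z=(1+d^*\cdot y)/\lambda$, the prefactors $\lambda^{-2/(p-1)}$ cancel exactly, so that $w_a(y,s)=\kappa(d^*,y)$ there. Since $K_{a,s}$ is open jointly in $(y,s)$, this forces in addition $\partial_s w_a=0$ on $K_{a,s}$, proving \eqref{wak}. Geometrically, $K_{a,s}$ is the intersection of $B(0,1)$ with $B(-ae^s,\lambda)$; since $|a|e^s+|1-\lambda|=O(e^s)$, the complement $B(0,1)\setminus K_{a,s}$ is contained in a thin shell $\{1-Ce^s<|y|<1\}$, where $\rho(y)=(1-|y|^2)^\alpha\le C' e^{\alpha s}$.

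Since the integrand of $\|(w_a(s)-\kappa(d^*),\partial_s w_a(s))\|_\H^2$ is supported in this shell, the decay estimate $O(e^{\alpha s/2})$ then follows by pulling out the pointwise bound on $\rho$ and invoking the uniform $L^2(B(0,1))$-bounds on $w_a-\kappa(d^*)$, $\nabla(w_a-\kappa(d^*))$ and $\partial_s w_a$ furnished by (i), together with the smoothness of $\kappa(d^*,\cdot)$ on $\overline{B(0,1)}$. The main technical obstacle is the bookkeeping in the change of variables: one must track the factor $\lambda^{-2/(p-1)-1}$ appearing in $\nabla_y w_a$ and verify that the shifted, rescaled $z$-domain remains strictly inside $B(0,A^*)$ uniformly in $s\le s_1(a)$, so that the hypothesis bound $M^*$ is actually applicable to every term produced by the change of variables.
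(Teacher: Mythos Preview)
Your proposal is correct and follows essentially the same route as the paper: choose $s_1(a)$ so that $\lambda\sim 1$ and the image of $B(0,1)$ under $y\mapsto z$ stays in $B(0,A^*)$, differentiate \eqref{waw} and change variables for (i), then for (ii) use the algebraic invariance $\lambda^{-2/(p-1)}\kappa(d^*,z)=\kappa(d^*,y)$ on $K_{a,s}$, localize the $\H$-norm of the difference to the thin shell $B(0,1)\setminus K_{a,s}\subset\{1-|y|\le C|a|e^s\}$, and extract the factor $\sqrt{\rho}\le Ce^{\alpha s/2}$ there. The only cosmetic difference is that you explicitly note $\partial_s w_a=0$ on $K_{a,s}$ by openness in $(y,s)$, whereas the paper leaves this implicit; and the paper records the intermediate unweighted $H^1\times L^2(|y|<1)$ bound on $w_a$ (which you also obtain en route in (i)) to control the remaining integral over the shell.
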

\begin{proof}Consider $a\in \m R^N$.\\
(i) Taking 
\begin{equation}\label{conds}
s\le \sigma_a\equiv\min\left(-\log(2|d^*|\cdot|a|), \log  \frac{A^*-1}{|a|(1-A^*|d^*|)}\right),
\end{equation}
 we see in \eqref{waw} that $\frac 12 \le \lambda \le 2$ and $\frac{|y+ae^s|}\lambda <A^*$ whenever $|y|<1$. Since $\rho(y)\le 1$ from \eqref{defro}, this yields 
\[
\int_{|y|<1} (w_a(y,s))^2 \rho(y) dy \le 2^{\frac 2{p-1}}\int_{|z|<A^*}(W(z,s-\log \lambda))^2 dz.
\]
Since \eqref{waw} gives
\begin{align*}
\nabla w_a(y,s)& = \lambda^{-\frac 2{p-1}} \nabla W(z,s-\log \lambda)\mbox{ where }z=\frac{y+ae^s}\lambda \\
\partial_s w_a(y,s)& = \lambda^{-\frac 2{p-1}}\left((1-\frac{\lambda'}{\lambda})
\partial_s W+ae^s(1-\frac{\lambda'}{\lambda^2})\cdot \nabla W-\frac 2{p-1}\frac{\lambda'}{\lambda}W\right)(z,s-\log \lambda),
\end{align*}
and $|\lambda'|\le |d^*||a|e^s \le \frac 12$ from \eqref{conds}, we similarly get 
\[
\|(w_a(s),\partial_s w_a(s))\|_{\H} \le C\|(w_a(s),\partial_s w_a(s))\|_{H^1\times L^2(|y|<A^*)}.
\]
Using the bound in \eqref{hypo}, we conclude the proof of (i).\\
(ii) The key idea here is the fact that $\kappa(d^*)$ is invariant under the transformation \eqref{waw}.
Introducing the following intersection between balls
\begin{equation}\label{defkas}
K_{a,s}=\{|y|<1\}\cap \{|z|<1\}\mbox{ where }z=\frac{y+ae^s}\lambda\mbox{ and }\lambda = 1-d^*\cdot a e^s,
\end{equation}
we see from \eqref{conds} that $K_{a,s}$ is a non empty open set for $|s|$ large enough. Moreover, using \eqref{waw}, \eqref{hypo} and the definition \eqref{defkd} of $\kappa(d,y)$, we see that for all $y\in K_{a,s}$,
\begin{align}
 w_a(y,s)=&\lambda^{-\frac 2{p-1}}W(z,s-\log \lambda)=\lambda^{-\frac 2{p-1}}\kappa(d^*,z)\label{proofwak}\\
=&\frac{\kappa_0(1-|d^*|^2)^{\frac 1{p-1}}}{\lambda^{\frac 2{p-1}}\left(1+d^* \cdot \frac{(y+ae^s)}\lambda\right)^{\frac 2{p-1}}}
=\frac{\kappa_0(1-|d^*|^2)^{\frac 1{p-1}}}{\left(\lambda+d^* \cdot y+d^*\cdot ae^s\right)^{\frac 2{p-1}}}=
\kappa(d^*,y)\nonumber
\end{align}
and \eqref{wak} holds.\\
Therefore, from the bound in \eqref{hypo}, we write
\begin{align}
&\|(w_a(s), \partial_s w_a(s))-(\kappa(d^*,y),0)\|_{\H}=
\|(w_a(s), \partial_s w_a(s))-(\kappa(d^*,y),0)\|_{\H(y\not\in K_{a,s})}\nonumber\\
\le& \max_{|y|<1,\;y\not\in K_{a,s}}\sqrt{\rho(y)}(\|(w_a(s), \partial_s w_a(s))\|_{H^1\times L^2(|y|<1)}+\|\kappa(d^*,y)\|_{H^1(|y|<1)})\nonumber\\
\le& (CM^*+C(d^*)) \max_{|y|<1,\;y\not\in K_{a,s}}\sqrt{\rho(y)}.\label{warda}
\end{align}
Since when $|y|<1$ and $y\not\in K_{a,s}$, we have $|y+ae^s|\ge \lambda = 1- d^*\cdot a e^s$, it follows that $0\le 1-|y|\le |a|(1+|d^*|)e^s$, hence, by definition \eqref{defro} of $\rho$, it follows that
\[
\max_{|y|<1,\;y\not\in K_{a,s}}\sqrt{\rho(y)}\le (|a|(1+|d^*|)e^s)^{\frac \alpha 2}
\]
and (ii) follows from \eqref{warda}. This concludes the proof of Proposition \ref{prop1}.  
 \end{proof}

\bigskip
{\bf Part 2: A modulation technique as $s\to -\infty$}

From item (ii) in Proposition \ref{prop1}, we see that we can apply Proposition \ref{propexpo} and obtain for some $\bar \sigma(a)\in \m R$,
\begin{equation}\label{qsmall}
\forall s'\le s\le \bar \sigma(a),\;\;
\|q(s)\|_{\H}\le Ce^{-\mu_0(s-s')},
\end{equation}
where $q$ is defined by \eqref{defqq}, for some $C^1$ parameters $|d(s)|<1$ and $\nu(s) >-1+|d(s)|$ (please use the remark following Proposition \ref{propexpo} to see that the parameters $d(s)$ and $\nu(s)$ do not depend on $s'$ at all).\\
Taking $s\le \bar \sigma(a)$ and letting $s'\to \infty$, we see from \eqref{defqq} that 
\begin{equation}\label{q00}
q(y,s) =0\mbox{ hence }w_a(y,s)=\kappa^*_1(d(s),\nu(s),y)\mbox{ for all }|y|<1,
\end{equation}
on the one hand. 
On the other hand, we recall that we have already proved in item (ii) of Proposition \ref{prop1} that \eqref{wak} holds, namely that
\begin{equation}\label{wak2}
w_a(y,s) \equiv \kappa(d^*,y)\mbox{ for all }y\in K_{a,s}, 
\end{equation}
a non empty open set in $B(0,1)$.
 and
 defined in \eqref{defkas}. Therefore, by definitions \eqref{defkd} and \eqref{defk*} of $\kappa(d,y)$ and $\kappa^*_1(d,\nu,y)$, we see that 
\begin{equation}\label{ident0}
\forall y\in K_{a,s},\;\;\kappa^*_1(d(s),\nu(s),y)=\kappa(d^*,y)=\kappa^*_1(d^*,0,y).
\end{equation}
Since $K_{a,s}$ is a non empty open set, we see from \eqref{defk*} that we can identity the parameters and get
\[
d(s)=d^*\mbox{ and }\nu(s)=0,
\]
hence \eqref{ident0} extends to all $|y|<1$, namely that 
\begin{equation*}
w_a(y,s) \equiv \kappa(d^*,y)\mbox{ for all } |y|<1.
\end{equation*}
Recalling that $w_a$ is the short form of $w_{a,d^*\cdot a}$, we use the definition of similarity variables \eqref{defw} to recover
\begin{align}
u(x,t)&= (d^*\cdot a-t)^{\frac 2{p-1}}w_{a, d^*\cdot a}\left(\frac{x-a}{d^*\cdot a-t}, -\log(d^*\cdot a-t)\right)\nonumber\\
&=(d^*\cdot a-t)^{\frac 2{p-1}}\kappa\left(d^*,\frac{x-a}{d^*\cdot a-t}\right)
=(d^*\cdot a-t)^{\frac 2{p-1}}\kappa_0\frac{(1-|d^*|^2)^{\frac 1{p-1}}}{(1+d^*\cdot \frac{x-a}{d^*\cdot a-t})^{\frac 2{p-1}}}\nonumber\\
&=\kappa_0\frac{(1-|d^*|^2)^{\frac 1{p-1}}}{(d^*\cdot a-t +d^*\cdot(x-a))^{\frac 2{p-1}}}
=\kappa_0\frac{(1-|d^*|^2)^{\frac 1{p-1}}}{(-t +d^*x)^{\frac 2{p-1}}}\label{identu}
\end{align}
for all $(x,t)\in \q C_{a, d^*\cdot a, 1}\cap \q C_{0,0,\delta^*}$ and $t$ less than some $t_0(a)\in \m R$. From the uniqueness of the solution of equation \eqref{equ} in light cones, the identity \eqref{identu} extends to all $(x,t)\in \q C_{a, d^*\cdot a, 1}\cap \q C_{0,0,\delta^*}$. Since $a$ was arbitrary and 
\[
\displaystyle\cup_{a\in \m R}\q C_{a, d^*\cdot a, 1}\cap \q C_{0,0,\delta^*}=\q C_{0,0,\delta^*},
\]
we obtain the desired estimate \eqref{result}. This concludes the proof of Theorem \ref{thrig}'.

\end{proof}

\appendix

\section{Proof of Lemma \ref{lemliouv}}\label{applemliouv}
This section is devoted to the proof of Lemma \ref{lemliouv}. The case $N=1$ is treated in Claim 2.3 page 62 in \cite{MZcmp08}. Thus, we assume that
\[
N\ge 2
\]
in this section.\\
As in one space dimension, the result follows from the weak continuity of the solutions of equations \eqref{equ} in $H^1\times L^2$ (here and in the following, the domain on which we consider Sobolev spaces is $\m R^N$ unless otherwise specified). More precisely, consider a sequence of solutions $u_n$ to equation \eqref{equ} defined in 
\begin{equation}\label{defct}
\C_{t_0} =  \{(\xixi, \tautau)\;\mid\;0<\tautau< t_0\mbox{ and
}|\xixi|<A(1-\tautau)\}\mbox{ where }\tautau_0<1,
\end{equation}
such that for all $\tautau \in [0, \tautau_0]$,
 \begin{align}
&\|u_n(\tautau)\|_{L^2 (|\xixi|<A(1-\tautau))}+(1-\tautau)\|(\nabla u_n(\tautau), \partial_\tautau u_n(\tautau))\|_{L^2\times L^2 (|\xixi|<A(1-\tautau))}\le M_0(1-\tautau)^{-\frac 2{p-1}+\frac N2},\label{boundun}\\
&(u_n(0), \partial_\tautau u_n(0))\wto (z^*, z^*_1)\mbox{ as }n\to \infty,\mbox{ in }H^1\times L^2 (|x|<A)\label{limun}
\end{align}
where $M_0>0$ for some $(z^*, z^*_1)\in H^1\times L^2 (|x|<A)$. Then, we have the following:
\begin{lem}[Weak continuity with respect to initial data in $H^1\times L^2$ in some cone, for solutions to \eqref{equ}]\label{lemcont2}
There exists a solution $u(\xixi,\tautau)$ of \eqref{equ} with initial data $(z^*,z^*_1)$ defined in $\C_{\tautau_0}$ such that:\\
(a)  For all $\tautau\in [0, \tautau_0]$, $(v_n(\tautau), \partial_\tautau
v_n(\tautau)) \wto 0$ weakly in $H^1\times L^2(|\xixi|<A(1-\tautau))$, where $v_n = u_n -u$.\\
(b) $\sup_{\tautau \in [0, \tautau_0]}\|v_n(\tautau)\|_{L^2(|\xixi|<A(1-\tautau))}\to 0$ as $n\to \infty$.\\
(c) There exists $n_0\in {\m N}$ such that for all $n\ge n_0$ and $\tautau \in [0, \tautau_0]$,\\
 $\|\nabla v_n(\tautau)\|_{L^2(|\xixi|<A(1-\tautau))}+\|\partial_\tautau v_n(\tautau)\|_{L^2(|\xixi|<A(1-\tautau))}
\le 20M_0$.
\end{lem}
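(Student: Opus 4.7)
My plan is to combine the local Cauchy theory for \eqref{equ} in light cones (via finite speed of propagation) with a compactness argument based on Rellich--Kondrachov and the subconformal Sobolev embedding. First I would construct the candidate limit $u$: since $p$ satisfies \eqref{condp}, the Cauchy problem is locally well-posed in $H^1\times L^2$, so from the data $(z^*,z^*_1)$ one obtains a maximal solution $u$ on some truncated cone $\C_{T_*}\subset \C_{\tautau_0}$ together with a blow-up criterion asserting that if $T_*<\tautau_0$ then the $H^1\times L^2$ norm of $u$ on the time-slice diverges as $\tautau \uparrow T_*$. The bound \eqref{boundun} together with the weak lower semicontinuity of the $H^1\times L^2$ norm will provide a similar bound on $u$ as long as it is defined, so that in the end a bootstrap will force $T_*=\tautau_0$.

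Next I would set $v_n=u_n-u$; it solves the inhomogeneous linear wave equation
\begin{equation*}
\partial_\tautau^2 v_n-\Delta v_n = |u_n|^{p-1}u_n-|u|^{p-1}u,
\end{equation*}
with initial data weakly converging to $0$ by \eqref{limun}. The pointwise bound $||u_n|^{p-1}u_n-|u|^{p-1}u|\lesssim (|u_n|^{p-1}+|u|^{p-1})|v_n|$, combined with the subconformal Sobolev embedding $H^1\hookrightarrow L^{2p}$ (which holds with room to spare precisely because $p<\frac{N+3}{N-1}$), allows the nonlinear forcing to be estimated in $L^1_\tautau L^2_\xixi$ on the cone by a constant depending only on the uniform energy bound \eqref{boundun}. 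A multiplier/energy estimate in the truncated cone then yields a Gronwall inequality for the local energy $E_n(\tautau)$ of $v_n$, which combined with the Rellich--Kondrachov-driven strong $L^2$ convergence of the initial data differences gives the uniform bound (c) on $[0,T_*)$; this closes the bootstrap and shows $T_*=\tautau_0$.

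To obtain (b), I would use that $v_n(\tautau)$ is bounded in $H^1$ on the slice, so Rellich--Kondrachov yields strong $L^2$ convergence to some limit at each fixed $\tautau$; the uniform bound on $\partial_\tautau v_n$ coming from (c) gives uniform $L^2$-Lipschitz regularity in $\tautau$, so an Arzel\`a--Ascoli argument upgrades pointwise to uniform $L^2$ convergence on $[0,\tautau_0]$. For (a), the uniform $H^1\times L^2$ bound from (c) provides weak compactness on each slice; the identification of the weak limit with $0$ follows from (b) and the uniqueness of the Cauchy problem in the cone, so that the full sequence (not just a subsequence) converges weakly.

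The main obstacle I expect is passing to the limit in the nonlinear term $|u_n|^{p-1}u_n\tend |u|^{p-1}u$: weak $H^1$ convergence alone is insufficient, and one needs genuine strong convergence in an $L^q$ norm in which $u\mapsto |u|^{p-1}u$ is continuous. The subconformal restriction on $p$ is exactly what makes the Sobolev margin for Rellich compactness and the energy estimate close. This is also the step where the one-dimensional proof in \cite{MZcmp08} fails to transfer directly, since the $N=1$ argument relies on $H^1\hookrightarrow C^0$ which is unavailable for $N\ge 2$; instead, I would invoke the dispersive/Strichartz theory for the wave operator recorded in Appendix \ref{applemliouv} to handle the forcing term uniformly in $n$.
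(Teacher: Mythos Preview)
Your plan has a genuine gap at the crucial step: the embedding $H^1\hookrightarrow L^{2p}$ does \emph{not} hold ``with room to spare'' throughout the subconformal range. The Sobolev exponent is $2^*=\frac{2N}{N-2}$, so $H^1\hookrightarrow L^{2p}$ requires $p\le \frac{N}{N-2}$, whereas the subconformal range allows $p<\frac{N+3}{N-1}$; a direct computation gives $\frac{N}{N-2}-\frac{N+3}{N-1}=\frac{2(3-N)}{(N-1)(N-2)}$, which is negative for every $N\ge 4$. Hence for $N\ge 4$ and $p$ near the conformal exponent, the forcing $|u_n|^{p-1}u_n-|u|^{p-1}u$ cannot be placed in $L^1_\tautau L^2_\xixi$ from the energy bound alone, and your Gronwall/multiplier argument does not close. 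Your closing sentence acknowledges that Strichartz estimates are needed, but this is not a patch to the nonlinear limit step---it has to replace the whole $L^1_\tautau L^2_\xixi$ scheme you outlined.

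The paper's proof is organized differently. One first extends the data by a cutoff to compactly supported data on $\m R^N$, solves the equation on the whole space for a short time $\ttt$ depending only on the $H^1\times L^2$ bound, and proves a whole-space weak continuity result (Lemma~\ref{propg}). That lemma is the heart of the matter: the nonlinearity is handled by estimating $D^{1/2}h$, with $h=|z_n|^{p-1}z_n-|z|^{p-1}z$, in $L^{\frac{2(N+1)}{N+3}}_{t,x}$ via the Strichartz estimate of Lemma~\ref{lemstri}, the derivative-of-differences bound of Lemma~\ref{lemdd}, and the fractional Gagliardo--Nirenberg inequality of Lemma~\ref{lemgn}. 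The key smallness comes from strong convergence of $(z_n,\partial_t z_n)$ in $H^\gamma\times H^{\gamma-1}$ for $\gamma<1$ (item (ii) of Lemma~\ref{propg}), obtained by compactness at $t=0$ and well-posedness at that regularity; this is what drives the relevant space-time norms of $z_n-z$ to zero. Finite speed of propagation then transfers the conclusion back to the cone, and since $\ttt$ depends only on the uniform bound, a rescaling iteration \eqref{deftilde}--\eqref{deftk} reaches $\tautau_\epsilon$ and the bootstrap forces $\tautau^*>\tautau_0$.
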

The derivation of Lemma \ref{lemliouv} from this result is omitted, since it follows exactly as in the one-dimensional case (see Appendix A page 78 in \cite{MZcmp08}). On the contrary, the proof of Lemma \ref{lemcont2} is different, as it uses the properties of the wave operator, which heavily depend on the dimension. Thus, we only prove Lemma \ref{lemcont2} in the following. We proceed in 3 steps:\\
- In Step 1, we give several interpolation and Strichartz estimates.\\
- In Step 2, we give a weak continuity result for solutions defined in the whole space.\\
- In Step 3, using the finite speed of propagation and a localization technique, we prove Lemma \ref{lemcont2}.

\bigskip

{\bf Step 1: Interpolation and Strichartz estimates}

The proof of Lemma \ref{lemcont2} needs various classical interpolation and Strichartz estimates for equation \eqref{equ}, which we recall in the following.

\medskip

{\it - Strichartz estimates:} Introducing the following linear wave equation with a given source:
\begin{equation}\label{lcp}
\partial_t^2 v -\Delta v = h, 
\end{equation}
we recall the following Strichartz estimates from Lemma 2.1 page 150 in Kenig and Merle \cite{KMam08}:
\begin{lem}[Strichartz estimates for equation \eqref{lcp}]\label{lemstri} There is a constant $C>0$ such that for all $\ttt>0$, we have
\[
\sup_{t\in [0, \ttt]}\left(\|v(t)\|_{\dot H^1}+\|\partial_t v(t)\|_{\dot H^1}\right)
 \le C\left(\|v(0)\|_{\dot H^1}+\|\partial_t v(0)\|_{L^2}
+\|D^{\frac 12}h\|_{L^{\frac{2(N+1)}{(N+3)}}(S_\ttt)}
\right),
\]
where $S_\ttt=\m R^N \times [0, \ttt]$.
\end{lem}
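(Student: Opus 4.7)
The plan is to use Duhamel's formula to split the solution of \eqref{lcp} into a homogeneous part $v_{\rm hom}$ (solving the free wave equation with the prescribed Cauchy data and zero source) and a particular part $v_{\rm inh}$ (with zero Cauchy data and source $h$), and then bound each piece separately in the energy space on the slab $\m R^N\times[0,\ttt]$. Having such a decomposition reduces the lemma to two independent estimates, and lets us attack the source term with techniques specifically tailored for it.

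For $v_{\rm hom}$ I would rely on the classical conservation of energy for the free wave equation: multiplying $\partial_t^2 v_{\rm hom}-\Delta v_{\rm hom}=0$ by $\partial_t v_{\rm hom}$ and integrating in space yields
\[
\|v_{\rm hom}(t)\|_{\dot H^1}^2+\|\partial_t v_{\rm hom}(t)\|_{L^2}^2
=\|v(0)\|_{\dot H^1}^2+\|\partial_t v(0)\|_{L^2}^2,
\]
which produces the first two terms on the right-hand side with constant $1$. No dimensional subtleties arise here.

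For $v_{\rm inh}$, whose Duhamel representation is $v_{\rm inh}(t)=\int_0^t\frac{\sin((t-\sigma)\sqrt{-\Delta})}{\sqrt{-\Delta}}h(\sigma)\,d\sigma$, I would appeal to the inhomogeneous Strichartz estimate for the wave equation in $\m R^N$. The exponent $\frac{2(N+1)}{N+3}$ is dual to the sharp wave Strichartz exponent $\frac{2(N+1)}{N-1}$ corresponding to the space--time admissible pair $(q,r)=\bigl(\frac{2(N+1)}{N-1},\frac{2(N+1)}{N-1}\bigr)$ at the regularity level $\dot H^{1/2}$, and the factor $D^{1/2}$ exactly compensates the half-derivative gap between this natural level and the energy level $\dot H^1$ in which we wish to measure the solution. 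Writing the homogeneous Strichartz estimate $\|u\|_{L^{2(N+1)/(N-1)}_{t,x}}\lesssim\|u(0)\|_{\dot H^{1/2}}+\|\partial_t u(0)\|_{\dot H^{-1/2}}$ and applying $TT^*$ duality (combined with the Christ--Kiselev lemma to reconstruct the Duhamel integral from a bilinear form) yields
\[
\|v_{\rm inh}(t)\|_{\dot H^1}+\|\partial_t v_{\rm inh}(t)\|_{L^2}
\lesssim \|D^{1/2}h\|_{L^{\frac{2(N+1)}{N+3}}(S_\ttt)},
\]
which is precisely the third term of the right-hand side.

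The main technical obstacle is the underlying Strichartz inequality itself, which depends genuinely on the dimension $N\ge 2$ through dispersive decay and stationary-phase bounds for the propagator $e^{\pm it\sqrt{-\Delta}}$. Reproving it from scratch would require a substantial amount of harmonic analysis: oscillatory integrals, Hardy--Littlewood--Sobolev, the $TT^*$ method, and at the sharp endpoint the Keel--Tao argument. Since the statement in the form used here is literally Lemma 2.1 of Kenig--Merle \cite{KMam08}, the cleanest path is to cite that reference directly and rely on the well-established Strichartz theory (Strichartz, Ginibre--Velo, Keel--Tao) behind it, rather than reproduce the oscillatory-integral bounds.
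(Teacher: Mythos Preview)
Your proposal is correct and, if anything, more detailed than what the paper does: the paper offers no proof at all and simply recalls the estimate from Lemma~2.1 of Kenig--Merle \cite{KMam08}. Your Duhamel decomposition, energy conservation for the homogeneous part, and dual Strichartz/Christ--Kiselev argument for the inhomogeneous part are exactly the standard route to this inequality, and your concluding recommendation to cite \cite{KMam08} directly matches the paper's choice.
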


\medskip

{\it - Derivatives of differences:}  We need the following lemma by Killip and Vi\c san \cite{KVpams11}:
\begin{lem}[Derivatives of differences]\label{lemdd}Consider $F(U)=|U|^{p-1}U$, $1<q,q_1, q_2<\infty$ with $\frac 1q=\frac {p-1}{q_1}+\frac 1{q_2}$. Then, for any functions $U$ and $V$ such that the right-hand side is finite, we have 
\[
\|D^{\frac 12}\left[F(U) - F(V)\right]\|_{L^q}\le 
\|U\|_{L^{q_1}}^{p-1}\|D^{\frac 12}(U-V)\|_{L^{q_2}}+
\|U-V\|_{L^{q_1}}^{p-1}\|D^{\frac 12} V\|_{L^{q_2}}.
\]
\end{lem}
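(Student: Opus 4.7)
The plan is to view this as a difference version of the fractional Leibniz/chain rule for the nonlinearity $F(U) = |U|^{p-1}U$ and to establish it via a Littlewood--Paley paraproduct decomposition. I would start from the integral representation
\[
F(U) - F(V) = (U-V) \int_0^1 F'(V + \theta(U-V))\, d\theta =: (U-V)\, G(U,V),
\]
which converts the nonlinear difference into a bilinear product where one factor is $(U-V)$ and the other is a convex average of $F'(W) = p|W|^{p-1}$ along the segment from $V$ to $U$. I would then decompose $D^{1/2}$ of this product using Bony's paraproduct into a piece where the high frequency is carried by $(U-V)$ and a piece where the high frequency is carried by $G$.

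The first piece, combined with the fractional Leibniz rule and H\"older's inequality with exponents $q_2$ and $q_1/(p-1)$ (compatible by the scaling hypothesis $1/q = (p-1)/q_1 + 1/q_2$), yields the bound $\|U\|^{p-1}_{L^{q_1}}\|D^{1/2}(U-V)\|_{L^{q_2}}$, using the pointwise majoration $|G(U,V)|\le C(|U|^{p-1}+|V|^{p-1})$ together with the absorption $\|V\|_{L^{q_1}} \le \|U\|_{L^{q_1}} + \|U-V\|_{L^{q_1}}$ to dispose of the stray $V$-contribution.

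For the second piece, I would split $G(U,V) = F'(V) + R(U,V)$, where $R(U,V) = \int_0^1 [F'(V+\theta(U-V)) - F'(V)]\, d\theta$. The $F'(V)$ contribution, paired with $(U-V)$ with $D^{1/2}$ landing on $V$, produces $\|U-V\|_{L^{q_1}}^{p-1}\|D^{1/2}V\|_{L^{q_2}}$ via the key identity $\|\,|U-V|^{p-1}\|_{L^{q_1/(p-1)}} = (\|U-V\|_{L^{q_1}})^{p-1}$ together with a fractional chain-rule estimate for $D^{1/2}|V|^{p-1}$. The main obstacle is the $R(U,V)$ contribution in the regime $1 < p < 2$, where $F'(W) = p|W|^{p-1}$ is only H\"older continuous of exponent $p-1$ and the classical Christ--Weinstein fractional chain rule does not apply; one must instead invoke the non-smooth variant in the spirit of Killip--Vi\c san, decomposing $R$ dyadically in frequency, controlling the low-frequency pieces by the pointwise H\"older bound $\lvert |A|^{p-1} - |B|^{p-1}\rvert \le C|A-B|^{p-1}$ together with Bernstein's inequality, and summing over dyadic scales to recover the claimed norm structure.
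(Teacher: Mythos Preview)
The paper does not actually prove this lemma: its entire ``proof'' is a one-line citation to Lemma~2.3 of Killip--Vi\c{s}an \cite{KVpams11}, who in turn refer to Taylor's paradifferential calculus \cite{Tams00}. So there is no in-paper argument to compare against; your sketch is already more detailed than what the paper provides.

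That said, there is a concrete gap in your outline. In the second paraproduct piece you pair the low-frequency factor $(U-V)$ with $D^{1/2}G$ and split $G=F'(V)+R$. But $(U-V)F'(V)=p(U-V)|V|^{p-1}$ is \emph{linear} in $U-V$, so placing $D^{1/2}$ on the $|V|^{p-1}$ factor and applying H\"older together with a fractional chain rule produces a bound of the shape $\|U-V\|_{L^{r_1}}\|V\|_{L^{r_2}}^{p-2}\|D^{1/2}V\|_{L^{r_3}}$ (when $p\ge 2$), not $\|U-V\|_{L^{q_1}}^{p-1}\|D^{1/2}V\|_{L^{q_2}}$. The identity $\|\,|U-V|^{p-1}\|_{L^{q_1/(p-1)}}=\|U-V\|_{L^{q_1}}^{p-1}$ that you invoke is correct but irrelevant here, since only a single power of $U-V$ is present in this product. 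Your remainder piece $(U-V)R$, bounded pointwise by $C|U-V|^{p}$ in the subquadratic regime, yields after the chain rule a term $\|U-V\|^{p-1}\|D^{1/2}(U-V)\|$, again not the claimed second term. In short, the integral-mean factorization $F(U)-F(V)=(U-V)\,G(U,V)$ followed by a naive Leibniz split does not reproduce the specific two-term right-hand side; the argument in \cite{KVpams11, Tams00} proceeds instead by paralinearizing the difference $F(U)-F(V)$ directly at each Littlewood--Paley scale and telescoping, and it is through that frequency-localized organization (together with the pointwise H\"older bound on $F$) that the factor $\|U-V\|^{p-1}$ genuinely appears next to $\|D^{1/2}V\|$.
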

\begin{proof} See Lemma 2.3 page 1809 in \cite{KVpams11} where the statement is given, and where a proof inspired by Taylor \cite{Tams00} is sketched.
\end{proof} 

\medskip

{\it - A fractional Gagliardo-Nirenberg inequality:} We will use the following fractional Gagliardo-Nirenberg inequality from Oru \cite{Ophd98}:
\begin{lem}[A fractional Gagliardo-Nirenberg inequality]\label{lemgn} Consider $0\le s_1<s_2<\infty$, $1<p_1<\infty$, $1<p_2<\infty$, and $s$ and $p$ defined by 
\begin{equation}\label{deftheta}
s=\theta s_1 +(1-\theta)s_2\mbox{ and }\frac 1p=\frac \theta{p_1}+\frac{1-\theta}{p_2}
\mbox{ for some }\theta\in[0,1].
\end{equation}
Then, for any $f\in W^{s_1,p_1}\cap W^{s_2,p_2}$, it holds that $f\in W^{s,p}$ and 
\[
\|f\|_{W^{s,p}}\le C \|f\|_{W^{s_1,p_1}}^\theta\|f\|_{W^{s_2,p_2}}^{1-\theta}.
\]
\end{lem}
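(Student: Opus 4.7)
My plan is to prove this via Littlewood--Paley analysis, exploiting the identification of $W^{s,p}$ with the Triebel--Lizorkin space $F^s_{p,2}$ (equivalent, for $1<p<\infty$, to the Bessel potential space $H^{s,p}$), together with two successive applications of H\"older's inequality. This reduces the fractional Gagliardo--Nirenberg inequality to an elementary weighted $\ell^2$ estimate on the dyadic blocks of $f$.

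The first step is to invoke the square function characterization: for any $\sigma\ge 0$ and $1<q<\infty$,
\[
\|g\|_{W^{\sigma,q}} \sim \left\|\left(\sum_{j\ge 0} 4^{j\sigma} |\Delta_j g|^2\right)^{1/2}\right\|_{L^q},
\]
where $\{\Delta_j\}_{j\ge 0}$ is a smooth Littlewood--Paley partition of unity in frequency (see, e.g., Triebel or Grafakos). I would then exploit the defining relation $s=\theta s_1+(1-\theta)s_2$ to factor the weight pointwise as
\[
4^{js}|\Delta_j f|^2 = \bigl(4^{j s_1}|\Delta_j f|^2\bigr)^\theta \bigl(4^{j s_2}|\Delta_j f|^2\bigr)^{1-\theta},
\]
sum in $j$, and apply H\"older in $\ell^1$ with conjugate exponents $1/\theta$ and $1/(1-\theta)$ to obtain the pointwise bound
\[
\left(\sum_j 4^{js}|\Delta_j f|^2\right)^{1/2} \le \left(\sum_j 4^{j s_1}|\Delta_j f|^2\right)^{\theta/2}\left(\sum_j 4^{j s_2}|\Delta_j f|^2\right)^{(1-\theta)/2}.
\]

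The second step is to take the $L^p$ norm of both sides and apply H\"older in the spatial variable. The relation $\frac{1}{p}=\frac{\theta}{p_1}+\frac{1-\theta}{p_2}$ is precisely what makes $p_1/(\theta p)$ and $p_2/((1-\theta)p)$ conjugate exponents, so that
\[
\left\|\left(\sum_j 4^{js}|\Delta_j f|^2\right)^{1/2}\right\|_{L^p} \le \left\|\left(\sum_j 4^{j s_1}|\Delta_j f|^2\right)^{1/2}\right\|_{L^{p_1}}^{\theta}\left\|\left(\sum_j 4^{j s_2}|\Delta_j f|^2\right)^{1/2}\right\|_{L^{p_2}}^{1-\theta}.
\]
Recognizing each factor as the appropriate Sobolev norm via the square function characterization applied with $(\sigma,q)=(s_1,p_1)$ and $(\sigma,q)=(s_2,p_2)$ gives the desired bound.

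The main obstacle is justifying the square function characterization for non-integer $\sigma$ and for $p\neq 2$: this rests on vector-valued Calder\'on--Zygmund theory (the Fefferman--Stein inequality for the vector-valued maximal function), and is precisely where the hypothesis $1<p_1,p_2<\infty$ (hence $1<p<\infty$) becomes essential. The degenerate cases $\theta\in\{0,1\}$ are trivial, so no separate endpoint argument is needed. An alternative route, should one wish to avoid the square function machinery, is to appeal directly to the complex interpolation identity $[F^{s_1}_{p_1,2},F^{s_2}_{p_2,2}]_\theta=F^{s}_{p,2}$ together with the standard pointwise interpolation inequality of the complex method; the core analytic difficulty lies at the same place.
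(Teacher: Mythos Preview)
Your argument is correct: the identification $W^{s,p}=F^{s}_{p,2}$ for $1<p<\infty$, the pointwise factorization of the dyadic weights using $s=\theta s_1+(1-\theta)s_2$, and the two successive H\"older inequalities (in $\ell^1$ then in $L^p$) constitute a clean, self-contained proof. The endpoints $\theta\in\{0,1\}$ are indeed trivial, and your remark that the restriction $1<p_1,p_2<\infty$ enters exactly through the square function characterization is accurate.

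The paper does not give its own proof; it simply quotes the result from Oru's thesis and from Br\'ezis--Mironescu (Lemma~3.1 and Corollary~3.2 in \cite{BMjee01}). The proof in Br\'ezis--Mironescu is, in essence, the very argument you wrote: Littlewood--Paley decomposition, pointwise H\"older on the square function, then H\"older in space. So your route is not merely an alternative---it is the standard proof underlying the cited references, supplied in full where the paper only gives a pointer. The complex-interpolation alternative you mention at the end is also valid and yields the same inequality via $[F^{s_1}_{p_1,2},F^{s_2}_{p_2,2}]_\theta=F^{s}_{p,2}$, but your direct square-function argument is more elementary and more transparent about where each hypothesis is used.
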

\begin{proof}
This statement is a consequence of a more general statement proved by F. Oru in \cite{Ophd98}. Br\'ezis and Mironescu give the result, its consequence and proof in \cite{BMjee01} (see Lemma 3.1 and Corollary 3.2 in page 393 of that paper). 
\end{proof}

\bigskip

{\bf Step 2: Weak continuity in $H^1\times L^2$ in the whole space}

This is the aim of this step:
\begin{lem}[Weak continuity of the flow of equation \eqref{equ} in $H^1\times L^2$]\label{propg} Consider $z$ and $z_n$ solutions of equation \eqref{equ} such that 
$\supp(z_n)\cup \supp(z)\subset [0, \ttt]\times B(0, \bar A)$ and
\begin{equation}\label{boundzzn}
\forall t\in [0,\ttt],\;\;\|(z,\partial_t z)\|_{H^1\times L^2}+
\|(z_n,\partial_t z_n)\|_{H^1\times L^2}\le \bar M
\end{equation}
for some $\ttt>0$, $\bar A>0$ 
and $\bar M>0$. Assume that 
\begin{equation}\label{wconv}
z_n(0) \wto z(0)\mbox{ in }H^1\mbox{ and }\partial_t z_n(0) \wto \partial_t z(0)\mbox{ in }L^2
\end{equation}
as $n\to \infty$. Then, up to extracting a subsequence still denoted by $z_n$:\\
(i) We have
\begin{align*}
\|z_n\|_{L^{\frac{2(N+1)}{(N-2\gamma)}}(S_\ttt)}+
\|z\|_{L^{\frac{2(N+1)}{(N-2\gamma)}}(S_\ttt)}&\le C(\bar A, \ttt, \bar M),\\
\|D^{\frac 12}z_n\|_{L^{\frac{2(N+1)}{(N-1)}}(S_\ttt)}
+\|D^{\frac 12}z\|_{L^{\frac{2(N+1)}{(N-1)}}(S_\ttt)}
&\le C(\bar M),
\end{align*}
where $S_\ttt=\m R^N\times[0,\ttt]$.\\
(ii) For any $\gamma<1$ close enough to $1$,
\begin{equation}\label{conv1-}
\sup_{t\in [0,\ttt]}\|(z_n(t), \partial_t z_n(t))- (z(t), \partial_t z(t))\|_{H^{\gamma}\times H^{\gamma-1}}\to 0\mbox{ as }n\to \infty.
\end{equation}
(iii) We also have for any $\gamma<1$ close enough to $1$,
\begin{equation}\label{p+1}
\|z_n-z\|_{L^{\frac{2(N+1)}{(N-2\gamma)}}(S_\ttt)}+
\||D|^{\gamma-\frac 12}(z_n-z)\|_{L^{\frac{2(N+1)}{(N-1)}}(S_\ttt)}
\to 0\mbox{ as }n\to \infty.
\end{equation}
(iv) There exists $n_0(\bar A, \ttt, \bar M)\in \m N$ large enough, such that for all $n\ge n_0$, 
\[
\sup_{t\in [0,\ttt]}\|(z_n(t), \partial_t z_n(t))- (z(t), \partial_t z(t))\|_{\dot H^1\times L^2}\le 5 \bar M.
\]
(v) For all $t\in [0,\ttt]$,
\begin{equation}\label{convt}
z_n(t) \wto z(t)\mbox{ in }H^1\mbox{ and }\partial_t z_n(t) \wto \partial_t z(t)\mbox{ in }L^2.
\end{equation}
\end{lem}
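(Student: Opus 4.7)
The plan is to run Strichartz-type estimates at two regularity levels: at the $\dot H^1\times L^2$ level for item (i), and at a slightly lower level $H^\gamma\times H^{\gamma-1}$ (with $\gamma<1$ chosen close enough to $1$) for items (ii) and (iii), the point being that Rellich-type compactness upgrades the weak convergence \eqref{wconv} of initial data on the bounded support $B(0,\bar A)$ to strong convergence in $H^\gamma\times H^{\gamma-1}$. Item (iv) will follow from (i)--(ii) by the triangle inequality, and item (v) will be extracted by combining (ii) with the uniform $H^1\times L^2$-bound \eqref{boundzzn}.

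\textbf{Step 1 (item (i)).} I would apply Lemma \ref{lemstri} to $z_n$ with source $h_n=|z_n|^{p-1}z_n$, and estimate $\|D^{1/2}h_n\|_{L^{\frac{2(N+1)}{N+3}}(S_\ttt)}$ by means of Lemma \ref{lemdd} (taking $V=0$) as $\|z_n\|_{L^{q_1}(S_\ttt)}^{p-1}\,\|D^{1/2}z_n\|_{L^{\frac{2(N+1)}{N-1}}(S_\ttt)}$, the H\"older exponents being fixed so that $q_2=\frac{2(N+1)}{N-1}$ is the Strichartz-admissible exponent for $\dot H^{1/2}$. The subconformal condition $p<\frac{N+3}{N-1}$ ensures that $q_1$ lies strictly below the Sobolev-critical exponent of $\dot H^1$, so that \eqref{boundzzn} together with the compact spatial support of $z_n$ controls $\|z_n\|_{L^{q_1}(S_\ttt)}$ in terms of $\bar A,\ttt,\bar M$. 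A continuity argument on subintervals of $[0,\ttt]$ then closes the Strichartz bound, giving $\|D^{1/2}z_n\|_{L^{\frac{2(N+1)}{N-1}}(S_\ttt)}\le C(\bar M)$. The $L^{\frac{2(N+1)}{N-2\gamma}}$-control follows from Sobolev embedding applied at the fractional level, or directly by Lemma \ref{lemgn}. The same estimates hold for $z$.

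\textbf{Step 2 (items (ii)--(iv)).} Set $v_n=z_n-z$, which solves \eqref{lcp} with source $|z_n|^{p-1}z_n-|z|^{p-1}z$. Since $(z_n(0),\partial_t z_n(0))$ is uniformly bounded in $H^1\times L^2$ with support in $B(0,\bar A)$ and converges weakly to $(z(0),\partial_t z(0))$, the compact embeddings $H^1(B(0,\bar A))\hookrightarrow H^\gamma$ and $L^2(B(0,\bar A))\hookrightarrow H^{\gamma-1}$ (valid for $\gamma<1$) yield, up to extraction,
\[
\|(v_n(0),\partial_t v_n(0))\|_{H^\gamma\times H^{\gamma-1}}\to 0.
\]
I would then apply the $\dot H^\gamma$-analogue of Lemma \ref{lemstri} to $v_n$; its source is handled by Lemma \ref{lemdd}, which splits the relevant $D^{\gamma-1/2}$-norm into products of $\|z_n\|_{L^{q_1}}^{p-1}$ or $\|z\|_{L^{q_1}}^{p-1}$ (both uniformly controlled by Step 1) with $\|D^{\gamma-1/2}v_n\|_{L^{\frac{2(N+1)}{N-1}}}$, together with a symmetric term involving $\|v_n\|_{L^{q_1}}^{p-1}$. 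Choosing $\gamma<1$ close enough to $1$ that the subconformal condition leaves enough slack for all H\"older exponents to close, a Gronwall/bootstrap argument on $[0,\ttt]$ propagates the vanishing of the initial data through the full space-time Strichartz norm, yielding \eqref{conv1-} and \eqref{p+1}. Item (iv) then follows from (i) and the triangle inequality applied to $z_n=z+v_n$ at each fixed time.

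\textbf{Step 3 (item (v)) and the main obstacle.} For each $t\in[0,\ttt]$, the uniform bound \eqref{boundzzn} guarantees that every subsequence of $(z_n(t),\partial_t z_n(t))$ has a sub-subsequence converging weakly in $H^1\times L^2$; the strong convergence in $H^\gamma\times H^{\gamma-1}$ from (ii) forces the limit to be $(z(t),\partial_t z(t))$, so by uniqueness of weak limits the whole sequence converges, which is \eqref{convt}. The main obstacle is Step 2: one must choose a triple of exponents simultaneously compatible with (a) Strichartz admissibility at the $\dot H^\gamma$ level, (b) the product inequality of Lemma \ref{lemdd} applied to $|z_n|^{p-1}z_n-|z|^{p-1}z$, and (c) the Sobolev/fractional Gagliardo--Nirenberg controls derived in Step 1, with the subconformal condition providing just enough slack to push $\gamma$ strictly below but arbitrarily close to $1$. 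The exponent bookkeeping is where the dimensional dependence enters and is the heart of the argument.
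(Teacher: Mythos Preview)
Your proposal is correct and its overall architecture---Strichartz bounds at the $\dot H^1$ level, Rellich compactness on $B(0,\bar A)$ to upgrade the weak convergence of initial data to strong convergence in $H^\gamma\times H^{\gamma-1}$, then propagation at the lower regularity via Lindblad--Sogge well-posedness---matches the paper's. Two points of comparison are worth recording.

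For items (ii)--(iii), the paper simply invokes the well-posedness of \eqref{equ} in $H^\gamma\times H^{\gamma-1}$ from \cite{LSjfa95} to obtain \eqref{conv1-}, and then reads off \eqref{p+1} from the accompanying Strichartz control; your plan to rerun the estimates by hand at the $\dot H^\gamma$ level is the same argument unpacked.

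For item (iv), your shortcut is valid but misattributed: the bound follows immediately from the \emph{hypothesis} \eqref{boundzzn} and the triangle inequality (yielding in fact $2\bar M$), not from item (i), which only records space-time Strichartz norms. The paper, by contrast, proves (iv) the hard way: it applies Lemma~\ref{lemstri} to $v=z_n-z$ at the $\dot H^1$ level and shows that the source term $\|D^{1/2}(|z_n|^{p-1}z_n-|z|^{p-1}z)\|_{L^{2(N+1)/(N+3)}(S_{\bar t})}\to 0$, using Lemma~\ref{lemdd} and the fractional Gagliardo--Nirenberg inequality of Lemma~\ref{lemgn} to interpolate $\|D^{1/2}v\|_{L^{q_2}}$ between the vanishing norm from (iii) and the bounded $H^1$ norm. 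This is precisely the ``exponent bookkeeping'' you flag as the main obstacle, but under the uniform-in-time hypothesis \eqref{boundzzn} as stated, that computation is not needed for the conclusion of (iv).
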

\begin{proof}$ $\\
(i) This is a direct consequence of classical Strichartz estimates together with a fixed-point argument (see Lindblad and Sogge \cite{LSjfa95} or Keel and Tao \cite{KTajm98}).\\
(ii) Using \eqref{wconv} and compactness, we see that, up to extracting a subsequence still denoted by $z_n$, we have $(z_n(0), \partial_t z_n(0))\to (z(0), \partial_t z(0))$ as $n\to \infty$, strongly in $H^\gamma\times H^{\gamma-1}$ for any $\gamma<1$ close enough to $1$. Since we know from \cite{LSjfa95} that the Cauchy problem for equation \eqref{equ} is solved in $H^\gamma\times H^{\gamma-1}$, item (ii) follows.\\
(iii) This is a direct consequence of item (ii), thanks to classical Strichartz estimates and a fixed-point argument (see \cite{LSjfa95} or \cite{KTajm98}).\\
(iv) We will apply Lemma \ref{lemstri} with $v=z_n-z$ and $h=|z_n|^{p-1}z_n - |z|^{p-1}z$. Let us estimate in the following all the quantities appearing in the last line of Lemma \ref{lemstri}.\\
From \eqref{boundzzn}, we see that 
\begin{equation}\label{0}
\|v(0)\|_{\dot H^1}+\|\partial_t v(0)\|_{L^2}\le 4\bar M.
\end{equation}
It remains to estimate $\int_{S_\ttt} |D^{\frac 12}h|^{\frac {2(N+1)}{(N+3)}} dx dt$. 
Applying Lemma \ref{lemdd} with $U=z_n$, $V=z$ and $q=\frac{2(N+1)}{N+3}$, then integrating in time, we see that
\begin{align}
&\|D^{\frac 12} h\|_{L^{\frac{2(N+1)}{N+3}}(S_{\ttt})}^{\frac{2(N+1)}{N+3}}\nonumber\\
&\le \int_0^\ttt \|z_n\|_{L^{q_1}}^{{\frac{2(N+1)}{N+3}}(p-1)}\|D^{\frac 12} v\|_{L^{q_2}}^{\frac{2(N+1)}{N+3}}dt
+\int_0^\ttt \|v\|_{L^{q_1}}^{{\frac{2(N+1)}{N+3}}(p-1)}\|D^{\frac 12} z\|_{L^{q_2}}^{\frac{2(N+1)}{N+3}}dt\label{intt}
\end{align}
where
\begin{equation}\label{qr}
 \frac{N+3}{2(N+1)}=\frac {p-1}{q_1}+\frac 1{q_2}.
\end{equation}
Let us first handle the first term in \eqref{intt}.\\
{\it - The first term in \eqref{intt}}:\\ 
Using the condition \eqref{condp} on $p$, we see that
\[
 \frac{2(N+1)}{N+3}(p-1)<\frac{2(N+1)}{N+3}\frac 4{N-1} <\frac {2(N+1)}{N-2\gamma}.
\]
Since $\frac{N+3}{(p-1)(N-2\gamma)}<1$ from \eqref{condp}, using H\"older's inequality, we see that 
\begin{align}
&\int_0^\ttt \|z_n\|_{L^{q_1}}^{{\frac{2(N+1)}{N+3}}(p-1)}\|D^{\frac 12} v\|_{L^{q_2}}^{\frac{2(N+1)}{N+3}}dt\nonumber\\
\le& \left(\int_0^\ttt \|z_n\|_{L^{q_1}}^{\frac{2(N+1)}{N-2\gamma}} dt \right)^{\frac{(N-2\gamma)(p-1)}{N+3}}
\times \left(\int_0^\ttt \|D^{\frac 12} v\|_{L^{q_2}}^{\frac{2(N+1)}{N+3 - (N-2\gamma)(p-1)}}dt\right)^{\frac{N+3-(N-2\gamma)(p-1)}{N+3}}.\label{holder2}
\end{align}
Now, let us fix
\begin{equation}\label{defq1}
q_1 = \frac{2(N+1)}{N-2\gamma}.
\end{equation}
(note that $1<q_1<\infty$). From item (i) in Lemma \ref{propg}, we see that the first integral in \eqref{holder2} is bounded. 
As for the second integral in \eqref{holder2}, we note first that \eqref{qr} and \eqref{holder2} fix the value of $q_2$ such that
\begin{equation}\label{defq2}
\frac 1{q_2}=\frac{N+3}{2(N+1)}-\frac{(N-2\gamma)(p-1)}{2(N+1)},
\mbox{ hence }q_2 = \frac{2(N+1)}{N+3-(p-1)(N-2\gamma)}.
\end{equation}
therefore, that second integral is simply
\[
\|D^{\frac 12} v\|_{L^{q_2}(S_\ttt)}^{\frac{2(N+1)}{N+3}}
\]
I need to interpolate $\|D^{\frac 12} v\|_{L^{q_2}}$ between 
$\|v\|_{W^{\gamma-\frac 12,\frac{2(N+1)}{N-1}}}$
and $\|v\|_{H^1}$.
Applying Lemma \ref{lemgn} with $f=D^{\gamma-\frac 12} v$, $s_1=0$. $s_2=\frac 32-\gamma$, $s=1-\gamma$, $p=q_2$ defined in \eqref{defq2} and $p_1=\frac{2(N+1)}{N-1}$, we see from \eqref{deftheta} that
\[
\theta = \frac 1{3-2\gamma}\in(0,1)\mbox{ and }p_2 \sim \frac{4(1-\gamma)(N+1)}{4-(p-1)(N-2)}\mbox{ as }\gamma \to 1^-.
\]
Taking $\gamma$ close enough to $1$, we make $p_2\le 2$, and recalling that $\supp v \subset B(0,\bar A)$, we write from Lemma \ref{lemgn} and \eqref{boundzzn}: for any $t\in [0,\ttt]$, 
\begin{align}
\|D^{\frac 12} v(t)\|_{L^{q_2}}&\le C(\bar A)\|v(t)\|_{W^{\gamma-\frac 12,\frac{2(N+1)}{N-1}}}^\theta\|v(t)\|_{H^1}^{1-\theta}\nonumber\\
&\le C(\bar A, \bar M)\|v(t)\|_{W^{\gamma-\frac 12,\frac{2(N+1)}{N-1}}}^\theta\label{agn}
\end{align}
Since we have by definition \eqref{defq2} of $q_2$ that $\theta q_2 \le q_2 \le\frac{2(N+1)}{N-1}$ for $\gamma$ close enough to $1$, integrating \eqref{agn} in time, we see that
\[
\|D^{\frac 12} v\|_{L^{q_2}(S_\ttt)}^{q_2}
\le C(\bar A, \ttt, \bar M)\int_0^\ttt \|v(t)\|_{W^{\gamma-\frac 12,\frac{2(N+1)}{N-1}}}^{\frac{2(N+1)}{N-1}}dt\to 0.
\]
Using this together with items (i) and (iii) of Lemma \ref{propg}, we see from \eqref{holder2} that 
\begin{equation}\label{ligne1}
\int_0^\ttt \|z_n\|_{L^{q_1}}^{{\frac{2(N+1)}{N+3}}(p-1)}\|D^{\frac 12} v\|_{L^{q_2}}^{\frac{2(N+1)}{N+3}}dt\to 0\mbox{ as }n\to \infty.
\end{equation}
Now, I handle the second term in the right-hand side of \eqref{intt}.\\
{\it - The second term in \eqref{intt}}:\\ 
Using H\"older's inequality, we write
\begin{align*}
\int_0^\ttt \|v\|_{L^{q_1}}^{{\frac{2(N+1)}{N+3}}(p-1)}&\|D^{\frac 12} z\|_{L^{q_2}}^{\frac{2(N+1)}{N+3}}dt\\
\le&
\left(\int_0^\ttt \|D^{\frac 12} z\|_{L^{q_2}}^{\frac{2(N+1)}{N-1}}dt\right)^{\frac{N-1}{N+3}}
\left(\int_0^\ttt \|v\|_{L^{q_1}}^{{\frac{(p-1)(N+1)}2}}\right)^{\frac 4{N+3}}.
\end{align*}
Note from \eqref{defq1} and \eqref{defq2} that $q_1=\frac{2(N+1)}{N-2\gamma}$, $\frac{(p-1)(N+1)}2\le \frac{2(N+1)}{N-2\gamma}$ and $q_2 \le \frac{2(N+1)}{N-1}$ for $\gamma$ close enough to $1$. Since $\supp(v)$ and $\supp(z)$ are in $[0,\ttt]\times B(0, \bar A)$, it follows from items (i) and (iii) that 
\begin{equation}\label{ligne1bis}
\int_0^\ttt \|v\|_{L^{q_1}}^{{\frac{2(N+1)}{N+3}}(p-1)}\|D^{\frac 12} z\|_{L^{q_2}}^{\frac{2(N+1)}{N+3}}dt \to 0\mbox{ as }n\to \infty. 
\end{equation}
Using \eqref{ligne1}, \eqref{ligne1bis} 
and \eqref{intt}, we see that 
\[
\|D^{\frac 12} h\|_{L^{\frac{2(N+1)}{N+3}}(S_{\ttt})}^{\frac{2(N+1)}{N+3}}\to 0\mbox{ as }n\to \infty.
\]
Using \eqref{0}, Lemma \ref{lemstri} and item (ii) of Lemma \ref{propg}, we see that item (iv) of Lemma \ref{propg} follows.\\ 
(v) From \eqref{boundzzn} and the weak compactness of the unit ball in $H^1\times L^2$ together with the compactness of the embedding $H^1\times L^2 \subset H^\gamma\times H^{\gamma-1}$ for any $\gamma<1$, we see that for all $t\in [0,\ttt]$, 
\begin{align}
&(z_n(t), \partial_t z_n(t))\mbox{ converges to some }(\bar z(t), \bar z_1(t))\mbox{ as }n\to \infty,\label{one-hand}\\
&\mbox{weakly in }H^1\times L^2\mbox{ and strongly in }H^\gamma\times H^{\gamma-1}.\nonumber
\end{align}
Using item (ii) and the uniqueness of the limit in $ H^\gamma\times H^{\gamma-1}$, we see that
\[
(\bar z(t), \bar z_1(t))= (z(t), \partial_t z(t))
\]
 and item (v) follows from \eqref{one-hand}. This concludes the proof of Lemma \ref{propg}.
\end{proof}

\bigskip

{\bf Step 3: Weak continuity in $H^1\times L^2$ in some cone}

This step is devoted to the proof of Lemma \ref{lemcont2}. 

\begin{proof}[Proof of Lemma \ref{lemcont2}]
Lemma \ref{lemcont2} is simply a localized version of Lemma \ref{propg}.
Note first from \eqref{boundun} and \eqref{limun} that
\begin{equation}\label{boundz*}
\|(z^*, z_1^*)\|_{H^1\times L^2 (|x|<A)}\le 2 M_0.
\end{equation}
Therefore, we can define $u(\xixi,\tautau)$ as the maximal solution of \eqref{equ} with initial data $(z^*,z^*_1)$ defined in $\C_{\tautau^*}$ where $\tautau^*\le 1$ is maximal. Note from the solution of the Cauchy problem that 
\begin{equation}\label{alternative}
\mbox{either } \tautau^*=1\mbox{ or } \tautau^*<1 \mbox{ and
}\limsup_{\tautau \to \tautau^*}\|(u(\tautau), \partial_\tautau
u(\tautau))\|_{H^1\times L^2 (|\xixi|<A(1-\tautau))}= \infty.
\end{equation}
Introducing
\[
\tautau_\epsilon = \min(\tautau_0, \tautau^*-\epsilon).
\]
for any $\epsilon>0$, we claim that it is enough to prove that Lemma \ref{lemcont2} holds with $\tautau_0$ replaced by $\tautau_\epsilon$. In order to show this reduction, let us assume that Lemma \ref{lemcont2} holds with $\tautau_\epsilon$ instead of $\tautau_0$, and prove that $\tautau_\epsilon=\tautau_0$ for $\epsilon$ small enough
(in other words that $\tautau^*>\tautau_0$), which yields the good statement for Lemma \ref{lemcont2}.\\
 Assume by contradiction that $\tautau^*\le \tautau_0$. Then, using (b) and (c) of this lemma and \eqref{boundun}, we see that for all $\epsilon>0$, $\tautau_\epsilon=\tautau^*-\epsilon$ and for all $\tautau \in [0, \tautau^*-\epsilon]$, 
\begin{eqnarray*}
&&\|(u(\tautau), \partial_\tautau u(\tautau))\|_{H^1\times L^2(|\xixi|<A(1-\tautau))}
\le \|(u_n(\tautau), \partial_\tautau u_n(\tautau))\|_{H^1\times L^2(|\xixi|<A(1-\tautau))}\\
&+&\|(v_n(\tautau), \partial_\tautau v_n(\tautau))\|_{H^1\times L^2(|\xixi|<A(1-\tautau))} \le C(M_0,\tautau^*).
\end{eqnarray*}
Letting $\epsilon \to 0$, we see that $\limsup_{\tautau \to \tautau^*}\|(u(\tautau), \partial_\tautau
u(\tautau))\|_{H^1\times L^2 (|\xixi|<A(1-\tautau))}< \infty$. Since we also have $\tautau^*\le \tautau_0<1$, a contradiction follows from \eqref{alternative}.\\
Thus, as announced above, it is enough to prove that  Lemma \ref{lemcont2} holds with $\tautau_0$ replaced by $\tautau_\epsilon$.

\bigskip

Recall from \eqref{boundun} and \eqref{boundz*} that
\begin{equation}\label{bound0}
\|(u_n(0), \partial_\tautau u_n(0))\|_{H^1\times L^2 (|\xixi|<A)}\le 2M_0\mbox{ and }\|(u(0), \partial_t u(0))\|_{H^1\times L^2 (|x|<A)}\le 2M_0.
\end{equation}
From a classical method, we can extend all these functions to the ball $B(0,2A)$ such that if $|x|<A$, then
\begin{equation}\label{ext1}
u_n(x,0)=U_{0,n}(x),\;\;\partial_t u_n(x,0)=U_{1,n}(x),\;\;
u(x,0)=U_0(x),\;\;\partial_t u(x,0)=U_1(x),
\end{equation}
and
\[
\|(U_{0,n},U_{1,n})\|_{H^1\times L^2 (|x|<2A)}\le \gamma M_0\mbox{ and }
\|(U_0,U_1)\|_{H^1\times L^2 (|x|<2A)}\le \gamma M_0
\]
for some $\gamma =\gamma (A)>0$, with
\[
(U_{0,n},U_{1,n})\wto (U_0,U_1)\mbox{ in }H^1\times L^2(|x|<2A),\mbox{ as }n\to \infty.
\]         
Introducing $g\in C^\infty_c$ such that
\begin{equation}\label{defeta}
\forall |x|<\frac{4A}3,\;\;g(x)=1\mbox{ and }
\forall |x|>\frac{5A}3,\;\;g(x)=0
\end{equation}
and for all $x\in \m R^N$, 
\begin{equation}\label{ext2}
(Z_{0,n}(x), Z_{1,n}(x))=g(x)(U_{0,n}(x),U_{1,n}(x))\mbox{ and }(Z_0(x),Z_1(x))=g(x)(U_0(x),U_1(x)),
\end{equation}
we see that 
\[
\|(Z_{0,n},Z_{1,n})\|_{H^1\times L^2}\le \gamma'M_0\mbox{ and }
\|(Z_0,Z_1)\|_{H^1\times L^2}\le \gamma'M_0
\]
for some $\gamma'=\gamma'(A)>0$. Furthermore, 
\begin{equation}\label{zconv}
(Z_{0,n},Z_{1,n})\wto (Z_0,Z_1)\mbox{ in }H^1\times L^2,\mbox{ as }n\to \infty.
\end{equation}
Introducing $Z_n$ and $Z$ the solutions of \eqref{equ} defined in the whole space $\m R^N$, respectively with initial data $(Z_{0,n},Z_{1,n})$ and $(Z_0,Z_1)$, we see from the local existence theory in $H^1\times L^2$ that $Z$ and $Z_n$ are in $C([0,\ttt], H^1\times L^2$ and that
\begin{equation}\label{bound}
\forall t\in [0, \bar \ttt],\;\;
\|(Z(t), \partial_t Z(t))\|_{H^1\times L^2}+ 
\|(Z_n(t), \partial_t Z_n(t))\|_{H^1\times L^2}\le \bar M,
\end{equation}
for some $\ttt=\ttt(\gamma'M_0)>0$ and $\bar M=\bar M(\gamma'M_0)>0$. 
Since $\supp(Z_{0,n})\cup \supp (Z_0) \subset B(0, 2A)$, using the finite speed of propagation, we see that
\begin{equation}\label{supp}
\forall t\in [0, \bar \ttt],\;\;
\supp(Z_n(t))\cup \supp(Z(t))\subset B(0,\bar A)\mbox{ where }\bar A= 2A+\ttt,
\end{equation}
 From \eqref{bound}, \eqref{supp} and \eqref{zconv}, we can apply Lemma \ref{propg} and see that
\begin{align}
&\forall t\in [0,\ttt],\;\;(Z_n(t),\partial_t Z_n(t))\wto (Z(t),\partial_t Z(t))\mbox{ in }H^1\times L^2,\label{convrn}\\
&\sup_{t\in [0,\ttt]}\|Z_n(t)-Z(t)\|_{L^2}\to 0\mbox{ as }n\to \infty,\nonumber\\
&\sup_{t\in [0,\ttt]}\|(Z_n(t), \partial_t Z_n(t))-(Z(t), \partial_t Z(t))\|_{\dot H^1\times L^2}\le 5 \bar M,\nonumber
\end{align}
for all $n\ge n_0(\bar A, \ttt,\bar M)$.\\
Since we have from \eqref{ext1}, \eqref{defeta} and \eqref{ext2} for all $|x|<A$, 
\[
(Z_{0,n}(x), Z_{1,n}(x))=(u_n(x,0),\partial_t u_n(x,0))\mbox{ and }(Z_0(x),Z_1(x))=(u(x,0),\partial_t u(x,0)),
\]
and for all $t\ge 0$, $A(1-t)\le A-t$, it follows from the finite speed of propagation that for all $t\in [0,t_1]$ and $|x|<A(1-t)$, 
\[
Z_n(x,t)=u_n(x,t)\mbox{ and }Z(x,t)=u(x,t)\mbox{ where }t_1= \min(\ttt, t_\epsilon).
\]
Using \eqref{convrn}, we see that for all $t\in [0,t_1]$, 
\begin{align}
&(u_n(t),\partial_t u_n(t))\wto (u(t),\partial_t u(t))\mbox{ in }H^1\times L^2(|x|<A(1-t))\label{conv0}\\
&\sup_{t\in [0,t_1]}\|u_n(t)-u(t)\|_{L^2(|x|<A(1-t))}\to 0\mbox{ as }n\to \infty,\nonumber\\
&\sup_{t\in [0,t_1]}\|(u_n(t), \partial_t u_n(t))-(u(t), \partial_t u(t))\|_{\dot H^1\times L^2}\le 5 \bar M,\nonumber
\end{align}
for all $n\ge n_0(\bar A, \ttt,\bar M)$.\\
If $t_1= t_\epsilon$, then we are done.\\
If $t_1<t_\epsilon$, 
then we need to iterate this process. Introducing for all $\tau \in [0,\frac{t_\epsilon-t_1}{1-t_1}]$, 
\begin{align}
&\tilde u_n(\xi,\tau)=(1-t_1)^{\frac 2{p-1}}u_n(\xi(1-t_1),\tau(1-t_1)+t_1),\label{deftilde}\\
&\tilde u(\xi,\tau)=(1-t_1)^{\frac 2{p-1}}u(\xi(1-t_1),\tau(1-t_1)+t_1),\nonumber
\end{align}
we see from \eqref{boundun} that 
\[
\|(\tilde u_n(0), \partial_\tau \tilde u_n(0))\|_{H^1\times L^2 (|\xi|<A)}\le 2M_0\mbox{ and }\|(\tilde u(0), \partial_\tau \tilde u(0))\|_{H^1\times L^2 (|\xi|<A)}\le 2M_0,
\]
with the same bound as in \eqref{bound0}. By the same construction, we see that $(\tilde u_n, \partial_\tau \tilde u_n)  \wto (\tilde u, \partial_\tau \tilde u)$ as in \eqref{conv0}, provided that $\tau \le \bar t$, the same $\ttt$ as in the first iteration. Translating this result for $u_n$, we see that \eqref{conv0} holds for all $t\in[0, t_2]$ where 
\begin{equation}\label{deftk}
\forall k \ge 1,\;\;t_{k+1} = \min(t_\epsilon, t_k+(1-t_k)\ttt).
\end{equation}
If $t_2= t_\epsilon$, then we are done, otherwise, we further iterate the process and extend the convergence in \eqref{conv0} up to $t_k$ for some $k\ge 1$, provided that $t_{k'}<t_\epsilon$ whenever $k'\le k-1$. Clearly, in order to conclude, it is enough to prove that
\begin{equation}\label{hadaf}
t_k =t_\epsilon\mbox{ for some }k\ge 1.
\end{equation}
Assume this is not the case. From \eqref{deftk}, we see that for all $k\ge 1$, we have $t_k< t_\epsilon$ and $t_{k+1}=t_k+(1-t_k)\ttt$, hence  $t_{k+1}\ge t_k+(1-t_\epsilon)\ttt$ and $t_k \ge t_1+(k-1)(1-t_\epsilon)\ttt\to \infty$ as $k \to \infty$. Contradiction. Hence, \eqref{hadaf} holds and Lemma \ref{lemcont2} is proved. 
\end{proof}
 
\section{Properties of the Lyapunov functional $E$ \eqref{defenergy}}
We recall in this section some properties of  the Lyapunov functional $E$ \eqref{defenergy}, needed for the proof of the continuity of the blow-up time of equation \eqref{equ} with respect to initial data stated in Lemma \ref{lemcont}. 
These are the properties of $E$ we will need:
\begin{lem}[Properties of the functional $E$]\label{lemcrit}$ $\\
(i) {\bf (Blow-up criterion for equation \eqref{eqw})} Consider $W(y,s)$ a solution to equation \eqref{eqw} such that $W(y,s)$ is defined for all $|y|<1$ and 
$E(W(s_0))<0$ for some $s_0\in \m R$.
Then, $W(y,s)$ cannot exist for all $(y,s)\in B(0,1)\times [s_0, \infty)$.\\
(ii) {\bf (Continuity identity for the functional $E$)} 
For any solutions $w_1$ and $w_2$ of equation \eqref{eqw} and times $s_1$ and $s_2$ in their domains, we have
\begin{align*}
&|E(w_1(s_1))-E(w_2(s_2))|\\
&\le C\left(1+\left\|\vc{w_1(s_1)}{\partial_s w_1(s_1)}\right\|_{\q H}^p+\left\|\vc{w_2(s_2)}{\partial_s w_2(s_2)}\right\|_{\q H}^p\right)\left\|\vc{w_1(s_1)-w_2(s_2)}{\partial_s w_1(s_1)-\partial_s w_2(s_2)}\right\|_{\q H}.
\end{align*}
(iii) {\bf (Behavior of
$E(w_-(s))\to -\infty$
)} It holds that
$E(w_-(s))\to - \infty$
as $s\to \log(1-|\hat d(0)|)$.
\end{lem}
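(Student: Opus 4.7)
For part (i), I would apply a Levine-type concavity argument adapted to the similarity-variables setting, as developed by Antonini--Merle \cite{AMimrn01}. Introduce the weighted second moment $I(s):=\iint W(y,s)^2\,\rho(y)\,dy$ and, using equation \eqref{eqw} and integration by parts in the weighted measure $\rho\,dy$ (handling the first-order term $-\frac{p+3}{p-1}\partial_s w-2y\cdot\nabla\partial_s w$ so as to produce a self-adjoint structure), derive expressions for $I'(s)$ and $I''(s)$. Combining with the Lyapunov dissipation identity, which gives $E(W(s))\le E(W(s_0))<0$ along the flow as long as $W$ exists, one obtains a differential inequality of the form $JJ''-(1+\gamma)(J')^2\ge 0$ for $J(s):=I(s)+c(s-s_0+\sigma)^2$, with $c,\sigma>0$ and $\gamma>0$ suitably chosen in terms of $|E(W(s_0))|$. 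This forces $J^{-\gamma}$ to be concave and to vanish in finite $s$, producing blow-up of $\|W(s)\|_{L^2_\rho}$, which contradicts global existence on $B(0,1)\times[s_0,\infty)$.

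Part (ii) is a Lipschitz estimate for the functional $E$ on $\q H$. I would split $E(w_1(s_1))-E(w_2(s_2))$ into quadratic and nonlinear contributions. The quadratic terms (kinetic, gradient, $(y\cdot\nabla)^2$, and $w^2$) are handled by the pointwise identity $A^2-B^2=(A-B)(A+B)$ and Cauchy--Schwarz in $L^2_\rho$, which produces a bound of the form $C(\|(w_1,\partial_s w_1)\|_{\q H}+\|(w_2,\partial_s w_2)\|_{\q H})$ times the $\q H$-distance; this is absorbed by the $(1+\cdots^p)$ factor since $p>1$. For the nonlinear term $\tfrac{1}{p+1}\iint|w|^{p+1}\rho\,dy$, apply the pointwise inequality $\bigl||a|^{p+1}-|b|^{p+1}\bigr|\le C(|a|^p+|b|^p)|a-b|$, then H\"older's inequality with exponents $\tfrac{p+1}{p}$ and $p+1$, together with the weighted Sobolev embedding $\q H\hookrightarrow L^{p+1}_\rho$ valid in the subconformal range \eqref{condp} (see \cite{MZajm03}). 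Summing these contributions yields the claimed bound.

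For part (iii), I would compute $E(w_-(s))$ directly from \eqref{defw-}. Using $\kappa_0^{p-1}=2(p+1)/(p-1)^2$ to combine the nonlinear term with its quadratic counterparts, the integrand of $E(w_-(s))$ reduces to
\[
\frac{2\kappa_0^2\lambda^{2/(p-1)}}{(p-1)^2}\cdot\frac{e^{2s}+|\hat d(0)|^2-(\hat d(0)\cdot y)^2-\lambda}{(1-e^s+\hat d(0)\cdot y)^{2(p+1)/(p-1)}}\rho(y)+\frac{(p+1)\kappa_0^2\lambda^{2/(p-1)}}{(p-1)^2}\cdot\frac{\rho(y)}{(1-e^s+\hat d(0)\cdot y)^{4/(p-1)}},
\]
with $\lambda=1-|\hat d(0)|^2$. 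Set $\eta:=1-e^s-|\hat d(0)|\to 0^+$ as $s\to\log(1-|\hat d(0)|)^-$, localize near the boundary singularity $y_*=-\hat d(0)/|\hat d(0)|$ via $y=y_*+\sigma$ using coordinates decomposing $\sigma$ parallel and perpendicular to $\hat d(0)$, and use the identity $\alpha+(N-1)/2=2/(p-1)$ from \eqref{defro}. A Laplace-type scaling then shows the first integral is equivalent to $\eta^{-(p+1)/(p-1)}$ times the value of the bracket at $y_*$, which equals $e^{2s}-\lambda\to-2|\hat d(0)|(1-|\hat d(0)|)<0$. The second integral diverges at most like $\eta^{1-2/(p-1)}$, strictly slower than $\eta^{-(p+1)/(p-1)}$, so $E(w_-(s))\to-\infty$. (If $\hat d(0)=0$, $w_-$ is $y$-independent and the computation is elementary.)

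The principal obstacle is part (iii): one must carefully identify the leading-order divergence of a weighted singular integral near a boundary point where both the numerator $(1-e^s+\hat d(0)\cdot y)^{-\beta}$ and the weight $\rho$ degenerate, the cancellation producing the exact power $\eta^{-(p+1)/(p-1)}$ depends on the algebraic relation $\alpha+(N-1)/2=2/(p-1)$ built into \eqref{defro}, and the sign of the leading coefficient must be verified by an explicit limit computation. Parts (i) and (ii) are essentially routine adaptations of well-established Lyapunov-functional arguments for equation \eqref{eqw} and of continuity estimates using the subconformal embedding $\q H\hookrightarrow L^{p+1}_\rho$.
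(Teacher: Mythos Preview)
Your proposal is correct. Parts (i) and (ii) are exactly what the cited references do: the paper simply points to Antonini--Merle \cite{AMimrn01} for the concavity/blow-up criterion and to the one-dimensional version of the continuity estimate (whose proof, via Cauchy--Schwarz on the quadratic terms and H\"older plus the subconformal embedding $\q H\hookrightarrow L^{p+1}_\rho$ on the nonlinear term, carries over verbatim).

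For part (iii) you take a genuinely different route. The paper observes that $w_-(y,s)$ defined in \eqref{defw-} depends on $y$ only through the single scalar $\hat d(0)\cdot y$; integrating out the $N-1$ orthogonal directions against the weight $\rho$ therefore reduces $E(w_-(s))$ to a one-dimensional weighted integral, and the paper then invokes the one-dimensional computation already carried out in \cite{MZcmp08}. Your approach instead performs a direct $N$-dimensional Laplace-type analysis near the boundary point $y_*=-\hat d(0)/|\hat d(0)|$, exploiting the identity $\alpha+\tfrac{N-1}{2}=\tfrac{2}{p-1}$ to extract the divergence rate $\eta^{-(p+1)/(p-1)}$ and checking the sign of the leading coefficient. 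Your computation is correct (the scaling $\tau\sim\eta$, $\sigma_\perp\sim\eta^{1/2}$ indeed produces a convergent limiting integral, and the bracket value $e^{2s}-\lambda\to -2|\hat d(0)|(1-|\hat d(0)|)$ gives the right sign), but it is considerably heavier than the paper's reduction. The paper's dimension-reduction trick buys simplicity and lets one recycle the 1D result without any new asymptotic analysis; your direct approach has the merit of being self-contained and of making the role of the relation $\alpha+\tfrac{N-1}{2}=\tfrac{2}{p-1}$ explicit.
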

\begin{proof}$ $\\
(i) See Theorem 2 page 1147 in Antonini and Merle \cite{AMimrn01}.\\
(ii) The one-dimensional proof is given in Claim B.1 page 662 in \cite{MZisol10}.
The higher-dimensional case follows with the same proof: see the justification in Lemma E.1 in \cite{MZtds}.\\
(iii) The one-dimensional case is given in Appendix B page 85 in \cite{MZcmp08}. In higher dimensions, $w_-(y,s)$ depends only on one coordinate at most (along $\hat d(0)$ when $\hat d(0)\neq 0$), and we reduce to the one-dimensional case.
\end{proof}

\def\cprime{$'$} \def\cprime{$'$}

\noindent{\bf Address}:\\
Universit\'e de Cergy Pontoise, D\'epartement de math\'ematiques, 
2 avenue Adolphe Chauvin, BP 222, 95302 Cergy Pontoise cedex, France.\\
\vspace{-7mm}
\begin{verbatim}
e-mail: merle@math.u-cergy.fr
\end{verbatim}
Universit\'e Paris 13, Institut Galil\'ee, 
LAGA, 99 avenue J.B. Cl\'ement, 93430 Villetaneuse, France.\\
\vspace{-7mm}
\begin{verbatim}
e-mail: Hatem.Zaag@univ-paris13.fr
\end{verbatim}
\end{document}